\documentclass[a4paper,10pt,twoside]{article}

\usepackage[utf8]{inputenc}
\usepackage{amsmath, amssymb, amsthm}
\usepackage{amstext, amsfonts, a4}
\usepackage{dsfont}
\usepackage{latexsym}
\usepackage{mathrsfs}
\usepackage[all,cmtip]{xy}
\usepackage{graphicx}
\usepackage{enumerate}
\usepackage{hyperref,color}
\usepackage{stmaryrd}
\usepackage[shortlabels]{enumitem}
\usepackage{bbm}

\usepackage{url}
\usepackage[pdftex,a4paper,left=3cm,right=3cm,top=3cm,bottom=3cm]{geometry}

\def\ra{\rightarrow}

\def\lra{\longrightarrow}

\def\hra{\hookrightarrow}
\def\lmapsto{\longmapsto}

 \def\sB{\mathscr{B}}  
  
  \def\sL{\mathscr{L}}

\def\bbA{\mathbb{A}}
\def\bbF{\mathbb{F}}\def\bbG{\mathbb{G}}

\def\bbN{\mathbb{N}}\def\bbP{\mathbb{P}}
\def\bbQ{\mathbb{Q}}\def\bbR{\mathbb{R}}\def\bbT{\mathbb{T}}

\def\bbZ{\mathbb{Z}}

\def\cA{\mathcal{A}}\def\cC{\mathcal{C}}
\def\cG{\mathcal{G}}\def\cH{\mathcal{H}}
\def\cL{\mathcal{L}}
\def\cO{\mathcal{O}}\def\cP{\mathcal{P}}
\def\cS{\mathcal{S}}\def\cT{\mathcal{T}}

\def\bfG{\mathbf{G}}
\def\bfL{\mathbf{L}}

\def\bfT{\mathbf{T}}

\def\dsS{\mathds{S}}\def\dsU{\mathds{U}}\def\dsZ{\mathds{Z}}

\def\bfc{\mathbf{c}}
\def\bfd{\mathbf{d}}

\def\bfq{\mathbf{q}}

\def\fs{\mathfrak{s}}

\def\fC{\mathfrak{C}}

\def\whB{\widehat{B}}
\def\whG{\widehat{G}}
\def\whI{\widehat{I}}\def\whL{\widehat{L}}

\def\whT{\widehat{T}}

\def\hG{\mathbf{\widehat{G}}}
\def\hT{\mathbf{\widehat{T}}}
\def\hB{\mathbf{\widehat{B}}}
\def\hL{\mathbf{\widehat{L}}}

\DeclareMathOperator{\ab}{ab}
\DeclareMathOperator{\ad}{ad}
\DeclareMathOperator{\Ad}{Ad}
\DeclareMathOperator{\aff}{aff}
\DeclareMathOperator{\Aut}{Aut}

\DeclareMathOperator{\As}{As}

\DeclareMathOperator{\cox}{cox}

\DeclareMathOperator{\der}{der}
\DeclareMathOperator{\diag}{diag}

\DeclareMathOperator{\ev}{ev}

\DeclareMathOperator{\fin}{f}

\DeclareMathOperator{\gr}{gr}

\DeclareMathOperator{\Hom}{Hom}

\DeclareMathOperator{\id}{id}

\DeclareMathOperator{\ind}{ind}

\DeclareMathOperator{\pos}{pos}
\DeclareMathOperator{\pr}{pr}

\DeclareMathOperator{\rank}{rank}

\DeclareMathOperator{\reg}{reg}

\DeclareMathOperator{\scc}{sc}

\DeclareMathOperator{\Spec}{Spec}

\DeclareMathOperator{\Sym}{Sym}
\DeclareMathOperator{\sym}{sym}

\DeclareMathOperator{\unr}{unr}

\def\lan{\langle}

\def\ran{\rangle}

\newtheorem{counter}[subsection]{$\!\!$}
\newenvironment{Def}{\begin{counter} {\bf Definition.}}{\end{counter}}

\newenvironment{Prop}{\begin{counter} {\bf Proposition.}}{\end{counter}}

\newenvironment{Pt}{\begin{counter} \rm}{\end{counter}}

\newenvironment{Proof}{{\flushleft \bf Proof :}}{\hfill $\square$ \vspace{5mm}}

\newtheorem{counter*}[subsubsection]{$\!\!$}
\newenvironment{Def*}{\begin{counter*} {\bf Definition.}}{\end{counter*}}
\newenvironment{Not*}{\begin{counter*} \rm {\bf Notation.}}{\end{counter*}}
\newenvironment{Notss*}{\begin{counter*} \rm {\bf Notations.}}{\end{counter*}}
\newenvironment{DefNot*}{\begin{counter*} \rm {\bf Definition-Notation.}}{\end{counter*}}
\newenvironment{Nots*}{\begin{counter*} \rm {\bf Notations.}}{\end{counter*}}
\newenvironment{Prop*}{\begin{counter*} {\bf Proposition.}}{\end{counter*}}
\newenvironment{Lem*}{\begin{counter*} {\bf Lemma.}}{\end{counter*}}
\newenvironment{Cor*}{\begin{counter*} {\bf Corollary.}}{\end{counter*}}
\newenvironment{Th*}{\begin{counter*} {\bf Theorem.}}{\end{counter*}}
\newenvironment{Rem*}{\begin{counter*} \rm {\bf Remark.}}{\end{counter*}}
\newenvironment{Ex*}{\begin{counter*} \rm {\bf Example.}}{\end{counter*}}
\newenvironment{Exs*}{\begin{counter*} \rm {\bf Examples.}}{\end{counter*}}
\newenvironment{Pt*}{\begin{counter*} \rm}{\end{counter*}}
\newenvironment{Q*}{\begin{counter*} \rm {\bf Question.}}{\end{counter*}}

\newcommand{\iso}{\stackrel{\sim}{\longrightarrow}}

\title{\huge{\textbf{pro-$p$ Iwahori Hecke algebras \\ and the dual Vinberg monoid}}}
\author{Tobias SCHMIDT}
\date{\today}

\begin{document}

\maketitle

\begin{abstract}
Let $\bfG$ be a split reductive group over the integers, $F$ a $p$-adic local field with residue field $\bbF_q$ and $G=\bfG(F)$. We relate the pro-$p$-Iwahori Hecke algebra $H$ of $G$ over $\bbF_q$ to the Vinberg monoid of the dual group $\widehat{\bfG}$ and study this relation. As an application, in the $GL_n$-case and for $F/ \bbQ_p$ unramified, we derive a parametrization of $\Spec Z$ by semisimple $n$-dimensional representations of the absolute Galois group ${\rm Gal}(\overline{F}/F)$, generalizing the case $n=2$ of \cite{PS25}. Here $Z$ denotes the center of $H$. 
\end{abstract}

\tableofcontents

\section{Introduction}

Let $\bfG$ be a split connected reductive group over $\bbZ$, with split maximal torus $\bfT$. Let $F/\bbQ_p$ be a finite extension with residue field $\bbF_q$ and $G=\bfG(F)$. 
The pro-$p$-Iwahori Hecke algebra $\cH^{(1)}_{\bbF_q}$ of the reductive $p$-adic group $G$ with coefficients in $\bbF_q$, as introduced and studied by Vignéras \cite{V16} is the convolution algebra over $\bbF_q$ on the set of 
double cosets of $G$ relative to the choice of a pro-$p$-Iwahori subgroup in $G$. The algebra $\cH^{(1)}_{\bbF_q}$ (and derived versions thereof) is expected to have strong applications to the categorical mod $p$ Langlands program, e.g. \cite{H16, EGH22}. Let 
$Z(\cH^{(1)}_{\bbF_q})$ be the center of $\cH^{(1)}_{\bbF_q}$. Let $\zeta$ be a central character of $G$ and $\omega$ the mod $p$ cyclotomic character of the absolute Galois group ${\rm Gal}(\overline{F}/F)$ of the $p$-adic field $F$. When $\bfG=GL_2$, it is shown in \cite{PS25} that the $\zeta$-part of the center admits a quotient morphism of $\bbF_q$-schemes
$$ \sL_\zeta: \Spec Z(\cH^{(1)}_{\bbF_q})_\zeta \lra X(q)_{\bbF_q,\zeta}$$ 
onto a certain projective algebraic curve $X(q)_{\bbF_q,\zeta}$, which parametrizes the semisimple $2$-dimensional mod $p$ representations of 
${\rm Gal}(\overline{F}/F)$ with determinant $\omega\zeta$.  Combined with the theory of the spherical representation \cite{PS23}, the morphism $\sL_\zeta$ recovers Grosse-Klönne's bijection in dimension $2$ between supersingular mod $p$ Hecke modules and irreducible Galois representations \cite{GK18}. In the case $F=\bbQ_p$, it recovers Breuil's semisimple mod $p$ local Langlands correspondence \cite{Br03} for the group $GL_2(\bbQ_p)$. It is natural to ask how this situation generalizes from $GL_2$ to more general groups $\bfG$. 

\vskip5pt
In the present paper, we generalize a suitable stratified version of the morphism $\sL_\zeta$ to 
the situation where  $\bfG=GL_n$ and $F/\bbQ_p$ is unramified. To explain how this works,
let us go back to the case $n=2$. In this case, both 
$\Spec Z(\cH^{(1)}_{\bbF_q})_\zeta$ and the curve 
$X(q)_{\bbF_q,\zeta}$ admit canonical stratifications by locally closed subvarieties. 
In fact, the stratifications consist only of two strata, given on $X(q)_{\bbF_q,\zeta}$ by the closed subspace of irreducible $2$-dimensional Galois representations and its open complement. 
The morphism $\sL_\zeta$ preserves the stratifications and is given on the open stratum by quotienting out a suitable dot action of the Weyl group. Now assume $\bfG=GL_n.$
We construct a canonical stratification of 
$\Xi:=\Spec Z(\cH^{(1)}_{\bbF_q})$
by locally closed subvarieties  $\Xi_{\bfL}$, indexed by the standard Levi subgroups $\bfL\subset \bfG$. Each stratum 
$\Xi_{\bfL}$ admits a suitable dot action of the Weyl group $W(\bfL):=N_{\bfG}(\bfL)/\bfL$. There is a $W(\bfL)$-stable subset $\Xi_{\bfL,\rm gen}$ consisting of "generic" (in a precise sense) connected components of $\Xi_{\bfL}$. The proportion of generic components tends to $1$, as $p$ tends to infinity. On the other hand, the semisimple $n$-dimensional mod $p$ representations of the absolute Galois group ${\rm Gal}(\overline{F}/F)$ are stratified along the standard Levi subgroups $\widehat{\bfL}$ in the dual group $\widehat{\bfG}$ (the duality $\bfG\leftrightarrow\widehat{\bfG}$ interchanges $\bfL$ and $\widehat{\bfL}$): there is an affine $\bbF_q$-scheme $X_{[\hL]}$, whose regular components (in a precise sense) parametrize the semisimple $n$-dimensional mod $p$ representations of ${\rm Gal}(\overline{F}/F)$ "with values in $\hL$". Here is the main result of the paper (Thm. \ref{cor_main_appli} and section \ref{section_GL2}).

\vskip5pt
{\bf Theorem.} {\it Let $\bfG=GL_n$ and $F/\bbQ_p$ unramified.\vskip5pt (i) For each Levi subgroup $\bfL\subset \bfG$ there is a morphism of affine $\bbF_q$-schemes $$\Xi_{\bfL,\rm gen}\ra X_{[\hL]},$$
which is constant on $W(\bfL)$-orbits. 

\vskip5pt 
(ii) Let $\bfG=GL_2$. The morphism of (i) extends to $\Xi_{\bfL}$ and induces an isomorphism  $$\Xi_{\bfL}/W(\bfL)\simeq X_{[\hL]}$$ for each Levi subgroup $\bfL\subset \bfG$.
Precomposition with the projection $\Xi_{\bfL}\ra \Xi_{\bfL}/W(\bfL)$ and restriction to the $\zeta$-part recovers the morphism $\sL_\zeta \mid_{ \Xi_{\bfL}}$.}
\vskip5pt

In the following, we explain how the morphism in part (i) is constructed. As in \cite{PS25}, the main tool for passing from the Hecke algebra of $G$ to Galois representations into the dual group $\widehat{\bfG}$ is the dual Vinberg monoid, or rather a suitable toral subvariety of what one may call the dual Vinberg-Zhu monoid \cite{Z20}. To give more details, let $V_{\mathbf{\whG}}$ be Vinberg's monoid \cite{Vin95} associated with the dual reductive group $\widehat{\bfG}$. Let $\rho_{\rm ad}$ denote the sum over the fundamental coweights of the adjoint group of $\widehat{\bfG}$, as well as its unique extension to $\bbA^1$. The Vinberg-Zhu monoid is the flat monoid $V_{\mathbf{\whG},\rho_{\rm ad}}\ra\bbA^1$ equal to the pull-back of  $V_{\mathbf{\whG}}$ along $\rho_{\rm ad}$. Its $1$-fiber recovers the group $\mathbf{\whG}$. Let $\widehat{\bfT}\subset \widehat{\bfG}$ denote the dual torus. It gives rise to a "toral" subvariety $V_{\mathbf{\whT}\subset\mathbf{\whG},\rho_{\ad}}\ra\bbA^1$ of $V_{\mathbf{\whG}}$. It has a natural $W$-action. Its $1$-fiber recovers $\widehat{\bfT}$ and its $0$-fiber 
$V_{\hT\subset\hG,0}$ is a certain semigroup stable under the action of $\widehat{\bfT}$.
The $\bbA^1$-monoid $V_{\mathbf{\whT}\subset\mathbf{\whG},\rho_{\ad}}$ is the correct version of the dual torus, when it comes to $\bfq$-versions of the classical (i.e. away from $\bfq=0$) Bernstein-Lusztig isomorphism \cite{L83, L89}. To make this more precise, write $\bbA^1=\Spec \bbZ[\bfq]$ with some indeterminate $\bfq$ and let 
$\cH^{(1)}(\bfq)$ be Vignéras {\it generic} 
pro-$p$-Iwahori Hecke algebra \cite{V16}, a suitable $\bbZ[\bfq]$-algebra defined by generators 
and relations,
specialising via $\bfq\mapsto q=0\in \bbF_q$ to the above $\bbF_q$-algebra $\cH^{(1)}_{\bbF_q}$. The algebra $\cH^{(1)}(\bfq)$ comes with a maximal commutative subring 
$\cA^{(1)}(\bfq)\subset\cH^{(1)}(\bfq)$ (depending on a choice of orientation), which itself has a canonical $W$-action. Similarly, there is a version $\cA(\bfq)\subset\cH(\bfq)$ of this situation on the level of the usual Iwahori Hecke algebra.
One of our main secondary results (Thm. \ref{ThBernsteingeneric}) says that 
the identity $X_*(\mathbf{T})=X^*(\mathbf{\whT})$ extends to a canonical $W$-equivariant isomorphism of 
$\bbZ[\bfq]$-algebras 
$$
\sB(\bfq):\cA(\bfq) \iso \bbZ[V_{\mathbf{\whT}\subset\mathbf{\whG},\rho_{\ad}}]$$
with the ring of functions on $V_{\mathbf{\whT}\subset\mathbf{\whG},\rho_{\ad}}$.
This $\bfq$-Bernstein-Lusztig isomorphism for general $\bfG$ generalizes the $GL_2$-case appearing in \cite{PS23}.
Passing to the $0$-fibre, we obtain the following 
semigroup over $\bbF_q$
$$V^{(1)}_{\hT\subset\hG,0,\bbF_q}:=
\hT(\bbF_q)\times_{\bbZ} V_{\hT\subset\hG,0},
$$ 
"augmented" by the finite torus $\hT(\bbF_q)$. It has its diagonal
$W$-action.
The above morphism $\sB(\bfq)$ induces 
a $W$-equivariant isomorphism of $\bbF_q$-algebras between 
$
\cA^{(1)}(0)_{\bbF_q}$ and the ring of functions on $V^{(1)}_{\hT\subset\hG,0,\bbF_q}$.
In this situation, Vignéras work implies that the $W$-invariants $\cA^{(1)}(0)_{\bbF_q} ^W$ equal the center $Z(\cH^{(1)}_{\bbF_q})$, whence an isomorphism 
$$ \Spec  Z(\cH^{(1)}_{\bbF_q}) \iso V^{(1)}_{\hT\subset\hG,0,\bbF_q}/W.$$ 

It is this isomorphism, which allows us to pass from the Hecke algebra of $G$ to the side of the dual group $\widehat{\bfG}$ and Galois representations with values in that group. 
The fact that $V^{(1)}_{\hT\subset\hG,0,\bbF_q}$ is a replacement of the dual torus $\widehat{\bfT}$ rather than the full dual group $\widehat{\bfG}$ explains, as we like to think, why the morphisms in our main theorem above take values in {\it semisimple} Galois representations, as opposed to more general kinds of Galois representations. 
The scheme $V_{\hT\subset\hG,0,\bbF_q}/W$ is a connected component stable under $\widehat{\bfT}$ of the scheme\footnote{The "Satake scheme" $V^{(1)}_{\hT\subset\hG,0,\bbF_q}/W$ was denoted $S(q)$ in \cite{PS25}.} 
$S:=V^{(1)}_{\hT\subset\hG,0,\bbF_q}/W$ and its 
$\widehat{\bfT}$-orbits are naturally indexed by the dual Levi subgroups $\widehat{\bfL}\subset \widehat{\bfG}$.
This gives a stratification of $S$ by locally closed subvarieties 
$S_{\widehat{\bfL}}\subset S.$
The inverse image of $S_{\widehat{\bfL}}$ in $\Spec  Z(\cH^{(1)}_{\bbF_q})$ is by definition our $\Xi_{\bfL}$. We show that the scheme $S_{\widehat{\bfL}}$ is a finite disjoint union of copies of the torus 
$\widehat{\bfL}^{\rm ab}$ indexed by the orbits in 
$\widehat{\bfT}(\bbF_q)/W_{\hL} $ relative to the corresponding parabolic subgroup $W_{\hL}$ of $W$. Jantzen's parametrization of dual Deligne-Lustzig pairs \cite{J81} restricted to Coxeter classes and adapted to weights of finite tori yields a map 
from the subset of generic orbits in $\widehat{\bfT}(\bbF_q)/W_{\hL} $ (in a suitable precise sense) to Frobenius stable semisimple conjugacy classes in $\widehat{\bfG}$, i.e. to 
mod $p$ tame inertial types of ${\rm Gal}(\overline{F}/F)$ (it is here, where we assume that $F/\bbQ_p$ is unramified). Adding the torus $\widehat{\bfL}^{\rm ab}$ corresponds to adding  "the determinant of Frobenius" and produces a map 
$S_{\widehat{\bfL},\rm gen}\ra X_{[\hL]}$ on a subset $S_{\widehat{\bfL},\rm gen}$ of generic components of 
$S_{\widehat{\bfL}}$. Pulling back this map to $\Xi_{\bfL}$ produces the map in the main theorem above.  For more details, we refer to the main body of the paper.

\vskip5pt 

In the situation of the theorem, any coherent sheaf $\cA$ on $\Xi$ with an $\cH^{(1)}_{\bbF_q}$-action along fibers, attaches at each point $z\in\Xi_{\bfL}$ a set of finitely many irreducible $\cH^{(1)}_{\bbF_q}$-modules (the Jordan-Hölder factors of the fiber $\cA_z$) to the corresponding Galois representation in $X_{[\hL]}.$
In the $GL_2$-case, taking for $\cA$ the spherical representation \cite{PS23} realizes the compatibilities with \cite{GK18} and \cite{Br03} alluded to in the beginning of this introduction. For the moment, a theory of the spherical representation for $GL_n$ is not available, we hope to come back to this in future work. 

\vskip5pt 

In the remaining part of the introduction, we briefly describe the content of the individual sections. 
In section $2$, we recall the construction of the Vinberg monoid $V_{\mathbf{\whG}}$, its restriction to the diagonal $V_{\mathbf{\whG},\rho_{\rm ad}}$ due to Zhu and the toral subvariety $V_{\hT\subset\hG}.$ We establish some geometric properties of its special fibre $V_{\hT\subset\hG,0}$. In section 3 we discuss some properties of mod $p$ tame representations of ${\rm Gal}(\overline{F}/F)$ and mod $p$ tame inertial types with values in $\mathbf{\whG}.$ This includes, in the case of $GL_n$, a classification of the irreducible representations in terms of Frobenius and monodromy. In subsection $4$ we discuss Deligne-Lusztig pairs and explain how Jantzen's parametrization of such pairs, evaluated on Coxeter classes, gives rise to a map from algebraic weights to mod $p$ tame inertial types. We discuss dot actions on finite tori in the $GL_n$-case, which allows us to apply Jantzen's map to weights of finite tori. This a priori depends on choices, which we explain in 4.3. Finally, we establish in 4.4 the map 
 $S_{\hL,\rm gen}\ra X_{[\hL]},$ by reducing it to suitable map between connected components and applying Jantzen's map. In the last subsection 4.5 we discuss the special case $GL_2$. In the final section $5$ we establish the above $\bfq$-Bernstein-Lusztig isomorphism  $\sB(\bfq)$ and deduce the application to pro-$p$-Iwahori Hecke algebras and their centers. We assemble some material on tori, roots and weights in an appendix. 

\vskip5pt 

Acknowledgements: The author thanks Elmar Grosse-Klönne, Cédric Pépin, Peter Schneider and Marie-France Vignéras for interesting discussions on this work over the years. He thanks in particular Cédric Pépin for numerous exchanges, many ideas presented here emerged in several common discussions.


\section{The dual Vinberg monoid}

We recall the algebraic construction of Vinberg's monoid via the canonical filtration and several related constructions, following Zhu's exposition in \cite{Z20}. This also allows us to set up the notation. Let $\mathbf{\whG}$ be a split connected reductive group scheme over $\bbZ$, with maximal torus $ \mathbf{\whT}$ contained in a Borel subgroup scheme $\mathbf{\whT} \subset \mathbf{\whB}$. Let $W$ be the Weyl group of the pair $(\mathbf{\whG},\mathbf{\whT})$.

\subsection{Definition}

\begin{Pt*}
Let $Z_{\mathbf{\whG}}$ be the center of $\mathbf{\whG}$ and $\mathbf{\whT}_{\ad}:=\mathbf{\whT}/Z_{\mathbf{\whG}}$. Let $Z_{\mathbf{\whG}}\subset \mathbf{\whG}\times \mathbf{\whT}$ be the diagonal inclusion. We then have a  $Z_{\mathbf{\whG}}$-action on $\mathbf{\whG}\times \mathbf{\whT}$ via $z.(g,t):=(zg,zt)$. Let $\mathbf{\whG}\times^{Z_{\mathbf{\whG}}}\mathbf{\whT}:=(\mathbf{\whG}\times \mathbf{\whT})/Z_{\mathbf{\whG}}$. It is an affine $\bbZ$-group scheme equipped with a faithfully flat group homomorphism 
$$
\xymatrix{
\mathbf{\whG}\times^{Z_{\mathbf{\whG}}}\mathbf{\whT}\ar[d]^{\pr_2} \\
\mathbf{\whT}_{\ad}.
}
$$

The group $X^*(\mathbf{\whT}_{\ad})$ is freely generated by the set of simple roots of $(\mathbf{\whG},\mathbf{\whT})$, which in turn define a set of invertible coordinates on 
the scheme $\mathbf{\whT}_{\ad}$. The $\bbZ$-point $1:=(1,\dots,1)$ is the unit of the group $\mathbf{\whT}_{\ad}$. The fiber $\pr_2^{-1}(1)$ identifies with  $\mathbf{\whG}$ embedded into $\mathbf{\whG}\times^{Z_{\mathbf{\whG}}}\mathbf{\whT}$ as $\mathbf{\whG}\times\{1_{\mathbf{\whT}}\}$.
\end{Pt*}

\begin{Pt*}
The canonical projection 
$\mathbf{\whT}\ra\mathbf{\whT}_{\ad}$ induces an injection $X^*(\mathbf{\whT}_{\ad})\ra X^*(\mathbf{\whT})$ identifying $X^*(\mathbf{\whT}_{\ad})$ with the subgroup of $X^*(\mathbf{\whT})$ generated by the set of roots of $(\mathbf{\whG},\mathbf{\whT})$. 
 If a character $e^{\nu}\in X^*(\mathbf{\whT})$ factors through the quotient $\mathbf{\whT}_{\ad}$ of $\mathbf{\whT}$, we write $\bar{e}^{\nu}$ for the corresponding character of $\mathbf{\whT}_{\ad}$. Let $X^*(\mathbf{\whT}_{\ad})_{\pos}\subset X^*(\mathbf{\whT}_{\ad})$ be the submonoid generated by the subset of positive roots with respect to $\mathbf{\whB}$. Then
$$
\mathbf{\whT}_{\ad}=\Spec(\bbZ[X^*(\mathbf{\whT}_{\ad})])\lra \mathbf{\whT}_{\ad}^+:= \Spec(\bbZ[X^*(\mathbf{\whT}_{\ad})_{\pos}])
$$
is an open immersion, and $\mathbf{\whT}_{\ad}^+$ is a diagonalizable affine $\bbZ$-monoid scheme with group of invertible elements the torus 
$\mathbf{\whT}_{\ad}$. The monoid $X^*(\mathbf{\whT}_{\ad})_{\pos}$ is freely generated by the set of simple roots, which define a set of coordinates on
$\mathbf{\whT}_{\ad}^+$ extending the one on $\mathbf{\whT}_{\ad}$. In particular $1=(1,\dots,1)$ is the unit of the monoid $\mathbf{\whT}_{\ad}^+$. The $\bbZ$-point $0:=(0,\dots,0)$ is an absorbing element. 
There is the usual partial order on $X^*(\mathbf{\whT})$ given by 
$\lambda_1\leq \lambda_2$ if $\lambda_1-\lambda_2\in X^*(\mathbf{\whT}_{\ad})_{\pos}.$
We let $X^*(\mathbf{\whT}_{\ad})^+_{\pos}$ be the submonoid of $X^*(\mathbf{\whT})$ generated by $X^*(\mathbf{\whT}_{\ad})_{\pos}$ and the dominant (relative to $\mathbf{\whB}$) weights $X^*(\mathbf{\whT})^+$.
\end{Pt*}

\begin{Pt*}\label{DefVinberg}\label{VGcartsquares}
The ring of functions $\bbZ[\mathbf{\whG}]$ is a $\mathbf{\whG}\times\mathbf{\whG}$-module via left and right translation of functions. It admits a filtration ${\rm fil}_\bullet\bbZ[\mathbf{\whG}]$ by $\mathbf{\whG}\times\mathbf{\whG}$-submodules indexed by $X^*(\mathbf{\whT}_{\ad})^+_{\pos}$, where 
$${\rm fil}_{\lambda}\bbZ[\mathbf{\whG}]$$
is the largest $\mathbf{\whG}\times\mathbf{\whG}$-submodule $V$ of $\bbZ[\mathbf{\whG}]$ such that $V(\nu)\neq 0$ implies $\nu \leq (\lambda^*,\lambda)$ for any weight $\nu$ of 
$\hT\times\hT$. Here $\lambda^*=-w_o(\lambda)$, where $w_o$ is the longest element of $W$ and $\leq$ is applied to each of the two components. We let 
$$R_{X^*(\mathbf{\whT}_{\ad})^+_{\pos}} \bbZ[\mathbf{\whG}]:=
\oplus_{\lambda} {\rm fil}_{\lambda}\bbZ[\mathbf{\whG}]e^{\lambda}$$
be the usual Rees ring associated to the above filtration (with formal symbols $e^\lambda$).
The natural inclusion $X^*(\mathbf{\whT}_{\ad})_{\pos}\subset X^*(\mathbf{\whT}_{\ad})^+_{\pos}$ induces a ring homomorphism
$$\bbZ[X^*(\mathbf{\whT}_{\ad})_{\pos}]\longrightarrow R_{X^*(\mathbf{\whT}_{\ad})^+_{\pos}} \bbZ[\mathbf{\whG}], e^{\lambda}\mapsto e^{\lambda},$$
which is faithfully flat. Both the source and the target are coalgebras and the homomorphism is in fact a homomorphism of coalgebras. 
The spectrum 
$$V_{\mathbf{\whG}}:=\Spec R_{X^*(\mathbf{\whT}_{\ad})^+_{\pos}} \bbZ[\mathbf{\whG}].$$
becomes thus an affine $\bbZ$-monoid scheme equipped with a faithfully flat monoid homomorphism 
$$
\xymatrix{
V_{\mathbf{\whG}} \ar[d]^d \\
\mathbf{\whT}_{\ad}^+
}
$$
One may verify that the inverse image
$d^{-1}(\mathbf{\whT}_{\ad})$ coincides
with the group of invertible elements of $V_{\mathbf{\whG}}$ and that the latter group is isomorphic, as a group scheme over $\mathbf{\whT}_{\ad}$, to $\mathbf{\whG}\times^{Z_{\mathbf{\whG}}}\mathbf{\whT}$ (with second projection). The monoid $V_{\mathbf{\whG}}$ is called the \emph{Vinberg monoid} of $\mathbf{\whG}$.

By general properties of the Rees ring, the special fibre $d^{-1}(0)$ identifies with the so-called {\it asymptotic cone} $\As_{\mathbf{\whG}}$ of $\mathbf{\whG}$ given by 
$\Spec \gr_\bullet \bbZ[\mathbf{\whG}]$, where $\gr_\bullet$ refers to the associated graded ring of a filtered ring. In particular, there are cartesian squares
$$
\xymatrix{
\mathbf{\whG} \ar[r] \ar[d]  &\ar[r] \ar[d]\mathbf{\whG}\times^{Z_{\mathbf{\whG}}}\mathbf{\whT} \ar[r] & V_{\mathbf{\whG}}\ar[r] \ar[d]^d & \As_{\mathbf{\whG}} \ar[d] \\
\Spec(\bbZ) \ar[r]^1 & \mathbf{\whT}_{\ad} \ar[r] &  \mathbf{\whT}_{\ad}^+ & \Spec(\bbZ) \ar[l]_0,
}
$$
the two external ones being closed immersions and the middle one an open immersion.
\end{Pt*}

\begin{Pt*}\label{rhoad+}
Let $\rho_{\ad}:\bbG_m\ra \mathbf{\whT}_{\ad}$ be the sum of the fundamental coweights of the adjoint group of $\mathbf{\whG}$. It is a cocharacter of the torus $\mathbf{\whT}_{\ad}$, which is dominant. Consequently, it extends uniquely to a monoid homomorphism
$$
\xymatrix{
\rho_{\ad}^+:\bbA^1\ar[r] & \mathbf{\whT}_{\ad}^+.
}
$$
Choosing a variable $\bfq$ on $\bbA^1$, it is given by the ring homomorphism 
$$\bbZ[\mathbf{\whT}_{\ad}^+]\lra \bbZ[\bfq], \bar{e}^\mu\mapsto \bfq^{\langle\mu,\rho_{\ad}\rangle}$$  for $\mu\in X^*(\mathbf{\whT}_{\ad}^+)=X^*(\mathbf{\whT}_{\ad})_{\pos}$.

\end{Pt*}

\begin{Def*}\label{Zmon}
We call the fiber product $V_{\mathbf{\whG},\rho_{\ad}}:=V_{\mathbf{\whG}}\times_{d, \mathbf{\whT}_{\ad}^+,\rho_{\ad}}\bbA^1$ the \emph{Vinberg-Zhu monoid}. It is equipped with the monoid homomorphism $d\rho_{\ad}:V_{\mathbf{\whG},\rho_{\ad}}\ra \bbA^1$ equal to the projection to the second factor:
$$
\xymatrix{
V_{\mathbf{\whG},\rho_{\ad}}\ar[r] \ar[d]_{d\rho_{\ad}} & V_{\mathbf{\whG}} \ar[d]^{d} \\
\bbA^1 \ar[r]^{\rho_{\ad}^+} & \mathbf{\whT}_{\ad}^+.
}
$$
\end{Def*}


\subsection{The toral submonoid $V_{\mathbf{\whT}\subset\mathbf{\whG}}$}\label{toralVin}

The diagonal inclusion $Z_{\mathbf{\whG}}\subset \mathbf{\whG}\times \mathbf{\whT}$ factors through $\mathbf{\whT}\times \mathbf{\whT}$, and 
$$\mathbf{\whT}\times^{Z_{\mathbf{\whG}}} \mathbf{\whT}:=(\mathbf{\whT}\times \mathbf{\whT})/Z_{\mathbf{\whG}}$$ is a closed subgroup
of $\mathbf{\whG}\times^{Z_{\mathbf{\whG}}} \mathbf{\whT}$. 

\begin{Def*}
The affine $\bbZ$-monoid scheme $V_{\mathbf{\whT}\subset\mathbf{\whG}}$ is the closure of $\mathbf{\whT}\times^{Z_{\mathbf{\whG}}} \mathbf{\whT}$ in $V_{\mathbf{\whG}}$.
\end{Def*}

The monoid $V_{\mathbf{\whT}\subset\mathbf{\whG}}$ is a submonoid of $V_{\mathbf{\whG}}$, whose group of invertible elements is the open subgroup 
$$
\mathbf{\whT}\times^{Z_{\mathbf{\whG}}} \mathbf{\whT}=V_{\mathbf{\whT}\subset\mathbf{\whG}}\cap d^{-1}(\mathbf{\whT}_{\ad})\subset V_{\mathbf{\whT}\subset\mathbf{\whG}}.
$$
Since moreover $\mathbf{\whT}\times^{Z_{\mathbf{\whG}}} \mathbf{\whT}$ is a quotient of $\mathbf{\whT}\times \mathbf{\whT}$, we have the inclusions
$$
\bbZ[V_{\mathbf{\whT}\subset\mathbf{\whG}}]\subset \bbZ[\mathbf{\whT}\times^{Z_{\mathbf{\whG}}} \mathbf{\whT}]\subset \bbZ[\mathbf{\whT}\times\mathbf{\whT}]=\bbZ[\mathbf{\whT}]\otimes \bbZ[\mathbf{\whT}].
$$
Then $\bbZ[V_{\mathbf{\whT}\subset\mathbf{\whG}}]$ is the following subring, cf. \cite[\S 1.3]{Z20}:
$$
\bbZ[V_{\mathbf{\whT}\subset\mathbf{\whG}}]=\bigoplus_{\substack{(\nu_1,\nu_2)\in X^*(\mathbf{\whT})^2 \\ \nu_2+(\nu_1)_{-}\in X^*(\mathbf{\whT}_{\ad})_{\pos}}}\bbZ (e_1^{\nu_1}\otimes e_2^{\nu_2}),
$$
where $\nu_{-}\in X^*(\mathbf{\whT})$ denotes the unique $W$-conjugate of $\nu\in X^*(\mathbf{\whT})$ which is \emph{antidominant}. In particular, the set indexing the direct sum is a submonoid of $X^*(\mathbf{\whT})^2$, and the $\bbZ$-monoid scheme $V_{\mathbf{\whT}\subset\mathbf{\whG}}$ is diagonalizable with monoid of characters
$$
X^*(V_{\mathbf{\whT}\subset\mathbf{\whG}})=\{(\nu_1,\nu_2)\in X^*(\mathbf{\whT})^2\ |\ \nu_2+(\nu_1)_{-}\in X^*(\mathbf{\whT}_{\ad})_{\pos}\}.
$$

\begin{Pt*}\label{VTGrhoad}
The monoid $V_{\mathbf{\whT}\subset\mathbf{\whG}}$ is equipped with the homomorphism $d:V_{\mathbf{\whT}\subset\mathbf{\whG}}\ra \mathbf{\whT}_{\ad}^+$ equal to the restriction of $d:V_{\mathbf{\whG}}\ra \mathbf{\whT}_{\ad}^+$. It corresponds to the ring homomorphism
\begin{eqnarray*}
\bbZ[X_*(\mathbf{\whT}_{\ad})_{\pos}] & \lra &\bbZ[V_{\mathbf{\whT}\subset\mathbf{\whG}}] \\
\bar{e}^{\nu} & \lmapsto & 1\otimes e_2^{\nu}
\end{eqnarray*}
In particular, it is faithfully flat.
Forming the cartesian square
$$
\xymatrix{
V_{\mathbf{\whT}\subset\mathbf{\whG},\rho_{\ad}}\ar[r] \ar[d]_{d\rho_{\ad}} & V_{\mathbf{\whT}\subset\mathbf{\whG}} \ar[d]^{d} \\
\bbA^1 \ar[r]^{\rho_{\ad}^+} & \mathbf{\whT}_{\ad}^+,
}
$$
we get a diagonalizable submonoid $V_{\mathbf{\whT}\subset\mathbf{\whG},\rho_{\ad}}\subset V_{\mathbf{\whG},\rho_{\ad}}$. 

\vskip5pt

As a last piece of notation, we will denote in the sequel the restriction of a function $e_1^{\nu_1}\otimes e_2^{\nu_2}$ on 
$V_{\mathbf{\whT}\subset\mathbf{\whG}}$ to 
$V_{\mathbf{\whT}\subset\mathbf{\whG},\rho_{\ad}}$ by $(e_1^{\nu_1}\otimes e_2^{\nu_2})|_{\rho_{\ad}} $. 

\end{Pt*}


\subsection{The Vinberg section}

\begin{Pt*}
Let $\fs: \mathbf{\whT}_{\ad}^+\ra V_{\mathbf{\whT}\subset\mathbf{\whG}}$ be the morphism of affine schemes corresponding to the ring morphism 
\begin{eqnarray*}
\bbZ[V_{\mathbf{\whT}\subset\mathbf{\whG}}] & \lra &\bbZ[X^*(\mathbf{\whT}_{\ad})_{\pos}] \\
e_1^{\nu_1}\otimes e_2^{\nu_2} & \lmapsto & \bar{e}^{\nu_1+\nu_2}
\end{eqnarray*}
Note here that indeed $\nu_1+\nu_2 \in X^*(\mathbf{\whT}_{\ad})_{\pos}$ since $\nu_1+\nu_2\geq \nu_2+(\nu_1)_-$. The morphism $\fs$ is a homomorphism of monoid schemes, which is a section to $d:V_{\mathbf{\whT}\subset\mathbf{\whG}}\ra \mathbf{\whT}_{\ad}^+$. Its restriction $\fs|_{\mathbf{\whT}_{\ad}}: \mathbf{\whT}_{\ad}\ra V_{\mathbf{\whT}\subset\mathbf{\whG}}|_{\mathbf{\whT}_{\ad}}=\mathbf{\whT}\times^{Z_{\mathbf{\whG}}} \mathbf{\whT}$ is the diagonal embedding $t\mod Z_{\mathbf{\whG}}\mapsto [t,t].$

Let $i_1:\mathbf{\whT}\ra V_{\mathbf{\whT}\subset\mathbf{\whG}}$ be the closed immersion of  $\mathbf{\whT}$ at $d=1$. Since $V_{\mathbf{\whT}\subset\mathbf{\whG}}$ is commutative, the morphism of $\mathbf{\whT}_{\ad}^+$-schemes
 $$
 \xymatrix{
 i_1\cdot\fs:\mathbf{\whT}\times\mathbf{\whT}_{\ad}^+ \ar[r] & V_{\mathbf{\whT}\subset\mathbf{\whG}}
 }
 $$
 is a monoid homomorphism. Its restriction to $\mathbf{\whT}_{\ad}$ is the group isomorphism
  $$
 \xymatrix{
 i_1\cdot(\fs|_{\mathbf{\whT}_{\ad}}):\mathbf{\whT}\times\mathbf{\whT}_{\ad} \ar[r]^<<<<<{\sim} & V_{\mathbf{\whT}\subset\mathbf{\whG}}|_{\mathbf{\whT}_{\ad}}=\mathbf{\whT}\times^{Z_{\mathbf{\whG}}} \mathbf{\whT}.
 }
 $$
\end{Pt*}

\begin{Def*}
The \emph{Vinberg section} is the composition of $\fs: \mathbf{\whT}_{\ad}^+\ra V_{\mathbf{\whT}\subset\mathbf{\whG}}$ with the canonical inclusion $V_{\mathbf{\whT}\subset\mathbf{\whG}}\subset V_{\mathbf{\whG}}$; it is a section to $d:V_{\mathbf{\whG}}\ra \mathbf{\whT}_{\ad}^+$, still denoted by $\fs$:
$$
\xymatrix{
V_{\mathbf{\whG}} \ar[d]^d \\
\mathbf{\whT}_{\ad}^+. \ar@/^1pc/[u]^{\fs}
}
$$
\end{Def*}
\begin{Pt*}
The restriction $\fs|_{\mathbf{\whT}_{\ad}}: \mathbf{\whT}_{\ad}\ra V_{\mathbf{\whG}}|_{\mathbf{\whT}_{\ad}}=\mathbf{\whG}\times^{Z_{\mathbf{\whG}}} \mathbf{\whT}$ splits the exact sequence of group schemes
$$
\xymatrix{
1 \ar[r] & \mathbf{\whG} \ar[r] &  \mathbf{\whG}\times^{Z_{\mathbf{\whG}}} \mathbf{\whT} \ar[r]^>>>>>{\pr_2} &  \mathbf{\whT}_{\ad} \ar[r] & 1.
}
$$
The resulting action of $\mathbf{\whT}_{\ad}$ on $\mathbf{\whG}=\mathbf{\whG}\times\{1_{\mathbf{\whT}}\}\subset \mathbf{\whG}\times^{Z_{\mathbf{\whG}}} \mathbf{\whT}$, equal to $\fs|_{\mathbf{\whT}_{\ad}}$ followed by conjugation, coincides with the faithful adjoint action $\Ad: \mathbf{\whT}_{\ad} \ra \underline{\Aut}(\mathbf{\whG})$. Thus
$$
 \mathbf{\whG}\times^{Z_{\mathbf{\whG}}} \mathbf{\whT}= \mathbf{\whG}\rtimes_{\fs} \mathbf{\whT}_{\ad}=\mathbf{\whG}\rtimes_{\Ad} \mathbf{\whT}_{\ad}=:\mathbf{\whG}\rtimes \mathbf{\whT}_{\ad}.
$$

Pulling-back along $\rho_{\ad}:\bbG_m\ra \mathbf{\whT}_{\ad}$,  we get, cf. \ref{rhoad+}-\ref{Zmon}:
$$
V_{\mathbf{\whG},\rho_{\ad}}|_{\bbG_m}= \mathbf{\whG}\rtimes_{\fs\rho_{\ad}} \bbG_m=\mathbf{\whG}\rtimes_{\Ad\rho_{\ad}} \bbG_m=:\mathbf{\whG}\rtimes \bbG_m =:{}^C\mathbf{G},
$$
i.e. the restriction of the Zhu monoid to $\bbG_m\subset \bbA^1$ is the so-called \emph{$C$-group}. Then diagram \ref{VGcartsquares} specializes as
$$
\xymatrix{
\mathbf{\whG} \ar[r] \ar[d]  &\ar[r] \ar[d]{}^C\mathbf{G} \ar[r] & V_{\mathbf{\whG},\rho_{\ad}}\ar[r] \ar[d]^{d\rho_{\ad}} & \As_{\mathbf{\whG}} \ar[d] \\
\Spec(\bbZ) \ar[r]^1 & \bbG_m \ar[r] &  \bbA^1 & \Spec(\bbZ) \ar[l]_0.
}
$$
\end{Pt*}

\begin{Pt*}\label{rtimes}
Restricting to the diagonalizable submonoid $V_{\mathbf{\whT}\subset\mathbf{\whG},\rho_{\ad}}\subset V_{\mathbf{\whG},\rho_{\ad}}$ \ref{VTGrhoad},
we get
$$
V_{\mathbf{\whT}\subset\mathbf{\whG},\rho_{\ad}}|_{\bbG_m}=\mathbf{\whT}\rtimes_{\fs\rho_{\ad}}\bbG_m=\mathbf{\whT}\rtimes_{\Ad\rho_{\ad}}\bbG_m=:\mathbf{\whT}\rtimes\bbG_m.
$$
Note that the action $\Ad: \mathbf{\whT}_{\ad} \ra \underline{\Aut}(\mathbf{\whT})$ is trivial, so that the semi-direct product $\mathbf{\whT}\rtimes\bbG_m$ is actually direct; it is the isomorphic image of $\mathbf{\whT}\times\bbG_m$ by the monoid homomorphism of $\bbA^1$-schemes
 $$
 \xymatrix{
 i_1\cdot\fs\rho_{\ad}^+:\mathbf{\whT}\times\bbA_1 \ar[r] & V_{\mathbf{\whT}\subset\mathbf{\whG},\rho_{\ad}}.
 }
 $$

Let us fix once and for all a variable $\bfq$ on $\bbA^1$, i.e. $\bbA^1=\Spec(\bbZ[\bfq]).$ \end{Pt*}

\begin{Lem*} \label{ringVTrhoad}
Pull-back of functions by $ i_1\cdot\fs\rho_{\ad}^+$ is the $\bbZ[\bfq]$-algebra embedding 
\begin{eqnarray*}
(i_1\cdot\fs\rho_{\ad}^+)^*:\bbZ[V_{\mathbf{\whT}\subset\mathbf{\whG},\rho_{\ad}}] &\lra &\bbZ[\mathbf{\whT}\times\bbA^1]\\
(e_1^{\nu_1}\otimes e_2^{\nu_2})|_{\rho_{\ad}} & \mapsto & e^{\nu_1}\otimes\bfq^{\lan\nu_1+\nu_2,\rho_{\ad}\ran}.
\end{eqnarray*}
Its image is the subring
$$
\bigoplus_{\nu\in X^*(\mathbf{\whT})}e^{\nu}\otimes \bfq^{\langle \nu-\nu_-,\rho_{\ad}\rangle}\bbZ[\bfq].
$$
\end{Lem*}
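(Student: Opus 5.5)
The plan is to compute the pullback on the basis elements $e_1^{\nu_1}\otimes e_2^{\nu_2}$ and then identify the image degree by degree in $\nu_1$.

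\textbf{Formula for the pullback.} The map $i_1\cdot\fs\rho_{\ad}^+$ is the product, in the commutative monoid $V_{\hT\subset\hG,\rho_{\ad}}$, of $i_1\circ\pr_1$ and $\fs\rho_{\ad}^+\circ\pr_2$. For a diagonalizable monoid, a character $\chi$ is multiplicative, so $(m_1m_2)^*\chi=m_1^*\chi\cdot m_2^*\chi$. It therefore suffices to compute the two pullbacks of $\chi=e_1^{\nu_1}\otimes e_2^{\nu_2}$ separately.

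\begin{itemize}
\item \emph{The factor $i_1$.} This is the inclusion of $\hT$ as the fibre $d=1$. By \ref{VTGrhoad} the element $1\otimes e_2^{\nu}$ is $d^*\bar e^\nu$, which is $1$ on that fibre. On $\hT\times^{Z_{\hG}}\hT$ the fibre over $1$ is $\hT\times\{1\}$, so $i_1^*(e_1^{\nu_1}\otimes e_2^{\nu_2})=e^{\nu_1}$.
\item \emph{The factor $\fs\rho_{\ad}^+$.} By definition of $\fs$ the character pulls back to $\bar e^{\nu_1+\nu_2}$. By \ref{rhoad+}, $\rho_{\ad}^{+*}$ then sends this to $\bfq^{\lan\nu_1+\nu_2,\rho_{\ad}\ran}$.
\end{itemize}

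Multiplying gives $e^{\nu_1}\otimes\bfq^{\lan\nu_1+\nu_2,\rho_{\ad}\ran}$, as claimed. The function $(e_1^{\nu_1}\otimes e_2^{\nu_2})|_{\rho_{\ad}}$ is the restriction through the base change of \ref{VTGrhoad}. Since $\bbZ[V_{\hT\subset\hG,\rho_{\ad}}]=\bbZ[V_{\hT\subset\hG}]\otimes_{\bbZ[X^*(\hT_{\ad})_{\pos}]}\bbZ[\bfq]$, the formula descends and the map is $\bbZ[\bfq]$-linear.

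\textbf{The image.} As a $\bbZ[\bfq]$-module, $\bbZ[V_{\hT\subset\hG,\rho_{\ad}}]$ is spanned by the restricted characters. Fix $\nu_1=\nu$. The admissible $\nu_2$ are those with $\nu_2+\nu_-\in X^*(\hT_{\ad})_{\pos}$, that is, $\nu_2=-\nu_-+\mu$ with $\mu\ge0$. Then
\[
\lan\nu+\nu_2,\rho_{\ad}\ran=\lan\nu-\nu_-,\rho_{\ad}\ran+\lan\mu,\rho_{\ad}\ran .
\]
The pairing $\lan\mu,\rho_{\ad}\ran$ ranges over all of $\bbN$ as $\mu$ ranges over the positive cone, since a simple root pairs to $1$. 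Hence the image in degree $\nu$ is exactly $e^\nu\otimes\bfq^{\lan\nu-\nu_-,\rho_{\ad}\ran}\bbZ[\bfq]$.

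\textbf{Injectivity (the main obstacle).} The subtle point is that several $\nu_2$ give the same image, so injectivity must come from relations in the base change. Over $\bbZ[\bfq]$ one has
\[
(e_1^{\nu_1}\otimes e_2^{\nu_2+\mu})|_{\rho_{\ad}}=\bfq^{\lan\mu,\rho_{\ad}\ran}\,(e_1^{\nu_1}\otimes e_2^{\nu_2})|_{\rho_{\ad}}
\]
whenever both sides are defined, because $1\otimes e_2^\mu=d^*\bar e^\mu$ and $\rho_{\ad}^{+*}\bar e^\mu=\bfq^{\lan\mu,\rho_{\ad}\ran}$. So in degree $\nu_1=\nu$ the tensor product is generated over $\bbZ[\bfq]$ by the single element $(e_1^\nu\otimes e_2^{-\nu_-})|_{\rho_{\ad}}$. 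This element maps to $e^\nu\otimes\bfq^{\lan\nu-\nu_-,\rho_{\ad}\ran}$, which is torsion-free over $\bbZ[\bfq]$. Hence each graded piece maps isomorphically onto its image.

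It remains to confirm that the tensor product is graded by $\nu_1$, so that the degree-wise argument suffices. This holds because the $\bbZ[X^*(\hT_{\ad})_{\pos}]$-module structure preserves $\nu_1$. Alternatively, injectivity can be deduced from flatness of $d$ together with the fact that $\hT\times\bbG_m$ is schematically dense in $V_{\hT\subset\hG,\rho_{\ad}}$.
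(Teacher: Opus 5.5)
Your proposal is correct and follows essentially the same route as the paper: the pullback formula comes from $\fs^*(e_1^{\nu_1}\otimes e_2^{\nu_2})=\bar e^{\nu_1+\nu_2}$ followed by base change along $\rho_{\ad}^{+}$, and the image is read off degree by degree in $\nu_1$ by writing $\nu_2=-\nu_-+\mu$ with $\mu\in X^*(\hT_{\ad})_{\pos}$. Your explicit injectivity argument is a useful addition: each $\nu_1$-graded piece of the base change is free of rank one over $\bbZ[\bfq]$ on $(e_1^{\nu}\otimes e_2^{-\nu_-})|_{\rho_{\ad}}$, and its image is torsion-free; the paper only appeals to right-exactness of the tensor product, which controls the image but not the kernel.
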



\begin{proof}
By definition, pull-back of functions by $ i_1\cdot\fs$ is the $\bbZ[\mathbf{\whT}_{\ad}^+]$-algebra embedding
\begin{eqnarray*}
(i_1\cdot\fs)^*:\bbZ[V_{\mathbf{\whT}\subset\mathbf{\whG}}] &\hra & \bbZ[\mathbf{\whT}]\otimes \bbZ[\mathbf{\whT}_{\ad}^+] \\
e_1^{\nu_1}\otimes e_2^{\nu_2} & \mapsto & e^{\nu_1}\otimes\bar{e}^{\nu_1+\nu_2}.
\end{eqnarray*}
Base changing along 
\begin{eqnarray*}
(\rho_{\ad}^+)^*: \bbZ[\mathbf{\whT}_{\ad}^+] &\lra & \bbZ[\bfq] \\
\bar{e}^{\mu} & \lmapsto & \bfq^{\lan\mu,\rho_{\ad}\ran},
\end{eqnarray*}
we get the formula for $(i_1\cdot\fs\rho_{\ad}^+)^*$. By the above, the image of $(i_1\cdot\fs)^*$ equals the subring
$$
\bigoplus_{\substack{(\nu_1,\mu)\in X^*(\mathbf{\whT})\times X^*(\mathbf{\whT}^+_{\ad})\\ \mu=\nu_1+\nu_2, \nu_i\in X^*(\mathbf{\whT}), \nu_2+(\nu_1)_-\in X^*(\mathbf{\whT}_{\ad})_{\pos}}}
\bbZ(e^{\nu_1}\otimes \bar{e}^{\mu}),
$$
which may be rewritten as 
$$
\bigoplus_{\substack{(\nu_1,\mu)\in X^*(\mathbf{\whT})\times X^*(\mathbf{\whT}^+_{\ad})\\ \mu-\nu_1+(\nu_1)_-\in X^*(\mathbf{\whT}_{\ad})_{\pos}}}
\bbZ(e^{\nu_1}\otimes \bar{e}^{\mu}).
$$
By right-exactness of the tensor product, its base change along $(\rho_{\ad}^+)^*$ equals the image of 
$(i_1\cdot\fs\rho_{\ad}^+)^*$. Hence $(i_1\cdot\fs\rho_{\ad}^+)^*$ identifies $\bbZ[V_{\mathbf{\whT}\subset\mathbf{\whG},\rho_{\ad}}]$ with the subring
$$
\bigoplus_{\substack{(\nu_1,\mu)\in X^*(\mathbf{\whT})\times X^*(\mathbf{\whT}^+_{\ad})\\ \mu-\nu_1+(\nu_1)_-\in X^*(\mathbf{\whT}_{\ad})_{\pos}}}
\bbZ(e^{\nu_1}\otimes \bfq^{\langle \mu,\rho_{\ad}\rangle }).
$$
Note here that for fixed $\nu_1$, the $\mu$ run through the subset 
$\nu_1-(\nu_1)_-+X^*(\mathbf{\whT}_{\ad})_{\pos}$ of 
$X^*(\mathbf{\whT}_{\ad})_{\pos}$ (since $\nu_1\geq (\nu_1)_-$), in particular
 $\langle \mu,\rho_{\ad}\rangle \in\bbN$. The latter subring therefore identifies with 
$$
\bigoplus_{(\nu_1,n)}
\bbZ(e^{\nu_1}\otimes \bfq^n)
$$
where the sum is taken over the $(\nu_1,n)\in X^*(\mathbf{\whT})\times \bbN$ with $n$ in the image of
\begin{eqnarray*}
\nu_1-(\nu_1)_-+X^*(\mathbf{\whT}_{\ad})_{\pos} &\lra & \bbN \\
\mu & \lmapsto & \lan\mu,\rho_{\ad}\ran.
\end{eqnarray*}  
The latter is precisely $\lan\nu_1-(\nu_1)_-,\rho_{\ad}\ran+\bbN$.
\end{proof}


By the preceding lemma \ref{ringVTrhoad}, we have the isomorphism of $\bbZ[\bfq]$-algebras
\begin{eqnarray*}
\bbZ[V_{\mathbf{\whT}\subset\mathbf{\whG},\rho_{\ad}}] &\iso &\bigoplus_{\nu\in X^*(\mathbf{\whT})}e^{\nu}\otimes\bfq^{\lan\nu-\nu_-,\rho_{\ad}\ran}\bbZ[\bfq]\subset \bbZ[\mathbf{\whT}]\otimes\bbZ[\bfq]\\
(e_1^{\nu}\otimes e_2^{-\nu_-})|_{\rho_{\ad}} & \lmapsto & e^{\nu}\otimes\bfq^{\lan\nu-\nu_-,\rho_{\ad}\ran}.
\end{eqnarray*}
\begin{Lem*}\label{lem_q_multiplication}
One has 
$$
(e^{\nu}\otimes \bfq^{\langle \nu-\nu_-,\rho_{\ad}\rangle})(e^{\nu'}\otimes \bfq^{\langle \nu'-(\nu')_-,\rho_{\ad}\rangle}) =\bfq^{\frac{\ell(\nu)+\ell(\nu')-\ell(\nu+\nu')}{2}}(e^{\nu+\nu'}\otimes \bfq^{\langle \nu+\nu'-(\nu+\nu')_-,\rho_{\ad}\rangle}).
$$
\end{Lem*}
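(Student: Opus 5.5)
The plan is a direct computation inside $\bbZ[\mathbf{\whT}]\otimes\bbZ[\bfq]$, where multiplication is simply $(e^{\nu}\otimes\bfq^a)(e^{\nu'}\otimes\bfq^b)=e^{\nu+\nu'}\otimes\bfq^{a+b}$. The left-hand side of the claimed identity is therefore
$$e^{\nu+\nu'}\otimes \bfq^{\langle \nu+\nu'-\nu_--(\nu')_-,\rho_{\ad}\rangle}.$$
Comparing with the right-hand side, the statement reduces to the identity of integers
$$\langle (\nu+\nu')_- -\nu_- -(\nu')_-,\rho_{\ad}\rangle=\tfrac{\ell(\nu)+\ell(\nu')-\ell(\nu+\nu')}{2}.$$

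The first step is to express $\ell$ in terms of $\rho_{\ad}$. I read $\ell(\nu)$ as the length of the translation by $\nu$ in the extended affine Weyl group, that is
$$\ell(\nu)=\sum_{\alpha>0}|\langle\nu,\alpha^\vee\rangle|=\langle \nu_+,2\rho^\vee\rangle,$$
where $\nu_+$ is the dominant conjugate of $\nu$. Since $\rho_{\ad}$ pairs with each simple root to $1$, it agrees with $\rho^\vee$ on the root lattice. The element $\nu_+-\nu_-$ lies in $X^*(\mathbf{\whT}_{\ad})_{\pos}$, and we have $\nu_-=w_o\nu_+$ and $w_o\rho_{\ad}=-\rho_{\ad}$. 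Together these give
$$\ell(\nu)=\langle\nu_+-\nu_-,\rho_{\ad}\rangle=-2\langle\nu_-,\rho_{\ad}\rangle.$$
A small point to address is that $\langle\nu_-,\rho_{\ad}\rangle$ alone may not be integral, since $\nu_-$ need not lie in the root lattice. However, all the differences that actually occur, such as $(\nu+\nu')_--\nu_--(\nu')_-$, do lie in the root lattice, because $\nu-\nu_-$ does. So I will phrase everything via pairings of root-lattice elements, or simply work over $\bbQ$.

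The second step is substitution. Using $\ell(\mu)=-2\langle\mu_-,\rho_{\ad}\rangle$ for $\mu=\nu,\nu',\nu+\nu'$, the right-hand side becomes
$$\tfrac12\bigl(-2\langle\nu_-,\rho_{\ad}\rangle-2\langle\nu'_-,\rho_{\ad}\rangle+2\langle(\nu+\nu')_-,\rho_{\ad}\rangle\bigr),$$
which is exactly the left-hand exponent. The identity then follows.

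It is also worth recording that the exponent is a nonnegative integer, so that the formula makes sense inside the ring of Lemma \ref{ringVTrhoad}. Take $w$ with $w(\nu+\nu')$ antidominant. Then $w\nu\geq\nu_-$ and $w\nu'\geq\nu'_-$, so
$$(\nu+\nu')_--\nu_--(\nu')_-\in X^*(\mathbf{\whT}_{\ad})_{\pos},$$
and pairing with $\rho_{\ad}$ lands in $\bbN$. I expect no real obstacle; the only delicate point is pinning down the convention for $\ell$ and checking that $\rho_{\ad}$ and $\rho^\vee$ agree on the relevant root-lattice elements.
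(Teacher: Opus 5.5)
Your proposal is correct and follows essentially the same route as the paper: both reduce the claim to an identity of $\bfq$-exponents and then substitute $\ell(\mu)=-\langle\mu_-,2\rho\rangle$ for $\mu=\nu,\nu',\nu+\nu'$, using that $2\rho$ and $2\rho_{\ad}$ agree on the root-lattice elements $\nu-\nu_-$ that actually occur. You also say explicitly that only root-lattice elements should be paired with $\rho_{\ad}$, and you check that the exponent lies in $\bbN$; the paper's computation leaves both points implicit.
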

\begin{proof}
One has 
$$
\ell(\nu)+\ell(\nu')-\ell(\nu+\nu')=-\lan \nu_{-},2\rho\ran-\lan (\nu')_{-},2\rho\ran+\lan (\nu+\nu')_{-},2\rho\ran,
$$
hence
\begin{eqnarray*}
&&\ell(\nu)+\ell(\nu')-\ell(\nu+\nu')+2\langle \nu+\nu'-(\nu+\nu')_-,\rho_{\ad}\rangle\\
&=&-\lan \nu_{-}+(\nu')_{-}-(\nu+\nu')_{-}-\nu-\nu'+(\nu+\nu')_-,2\rho\ran\\
&=&-\lan \nu_{-}+(\nu')_{-}-\nu-\nu',2\rho\ran\\
&=&2\lan \nu-\nu_-,\rho_{\ad}\ran +2\lan \nu'-(\nu')_-,\rho_{\ad}\ran
\end{eqnarray*}
as claimed. 
\end{proof}
The preceding lemma \ref{lem_q_multiplication} has the following corollary.
Put $$ E(\nu):= (e_1^{\nu}\otimes e_2^{-\nu_-})|_{\rho_{\ad}}.$$ 
\begin{Cor*}\label{Cor_basis} The elements $E(\nu)$ form
a $\bbZ[\bfq]$-basis of $\bbZ[V_{\mathbf{\whT}\subset\mathbf{\whG},\rho_{\ad}}]$, i.e.
$$\bbZ[V_{\mathbf{\whT}\subset\mathbf{\whG},\rho_{\ad}}]=
\bigoplus_{\nu\in X^*(\mathbf{\whT})}\bbZ[\bfq] E(\nu).$$
Moreover, $$
E(\nu)E(\nu')=\bfq_{\nu,\nu'}E(\nu+\nu'),
$$
with $\bfq_{\nu,\nu'}=\bfq^{\frac{\ell(\nu)+\ell(\nu')-\ell(\nu+\nu')}{2}}$
for all $\nu,\nu'\in X^*(\mathbf{\whT}).$ 
\end{Cor*}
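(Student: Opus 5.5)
We transport everything along the embedding $(i_1\cdot\fs\rho_{\ad}^+)^*$ of Lemma \ref{ringVTrhoad}. Since it is an injective $\bbZ[\bfq]$-algebra map, both claims may be checked on its image.

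\textbf{Step 1: images of the $E(\nu)$.} By the formula in Lemma \ref{ringVTrhoad}, applied to $(\nu_1,\nu_2)=(\nu,-\nu_-)$, the element $E(\nu)$ maps to
$$e^{\nu}\otimes\bfq^{\lan \nu-\nu_-,\rho_{\ad}\ran}.$$
This is legitimate because $(\nu,-\nu_-)$ lies in the character monoid: $-\nu_-+\nu_-=0\in X^*(\hT_{\ad})_{\pos}$.

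\textbf{Step 2: the basis statement.} By Lemma \ref{ringVTrhoad}, the image of the embedding is
$$\bigoplus_{\nu}e^{\nu}\otimes \bfq^{\lan\nu-\nu_-,\rho_{\ad}\ran}\bbZ[\bfq].$$
Within $\bbZ[\hT]\otimes\bbZ[\bfq]=\bigoplus_\nu e^\nu\otimes\bbZ[\bfq]$, each summand $e^{\nu}\otimes \bfq^{m_\nu}\bbZ[\bfq]$ is a free $\bbZ[\bfq]$-module of rank one. Its generator is $e^\nu\otimes\bfq^{m_\nu}$, which is exactly the image of $E(\nu)$. Freeness holds since $\bbZ[\bfq]$ is a domain and the $e^\nu$ are a basis. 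Hence the images of the $E(\nu)$ form a $\bbZ[\bfq]$-basis of the image. Pulling back along the isomorphism onto the image gives the direct sum decomposition.

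\textbf{Step 3: the multiplication rule.} The product formula is exactly Lemma \ref{lem_q_multiplication}, read through Step 1 and the injectivity of the embedding.

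\textbf{Expected obstacle.} The only point needing care is that the exponent $\frac{\ell(\nu)+\ell(\nu')-\ell(\nu+\nu')}{2}$ is a nonnegative integer, so that $\bfq_{\nu,\nu'}\in\bbZ[\bfq]$.

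This follows from the identity in the proof of Lemma \ref{lem_q_multiplication}. That identity expresses the exponent as
$$\lan \nu-\nu_-,\rho_{\ad}\ran+\lan \nu'-\nu'_-,\rho_{\ad}\ran-\lan \nu+\nu'-(\nu+\nu')_-,\rho_{\ad}\ran,$$
which is an integer.

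For nonnegativity, it equals $\lan (\nu+\nu')_- -\nu_- -\nu'_-,\rho_{\ad}\ran$. This is $\ge0$ because $\nu_-+\nu'_-\le(\nu+\nu')_-$. To see that inequality, write $(\nu+\nu')_-=w(\nu+\nu')$ for some $w\in W$. Then $w\nu\ge\nu_-$ and $w\nu'\ge\nu'_-$, since an antidominant weight is the minimum of its $W$-orbit. Adding these gives the claim.

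Alternatively, nonnegativity is automatic: the product $E(\nu)E(\nu')$ lies in the ring, so by Step 2 its coefficient on $E(\nu+\nu')$ lies in $\bbZ[\bfq]$.
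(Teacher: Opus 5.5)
Your proposal is correct and is essentially the paper's argument: the corollary is read off from the identification of $\bbZ[V_{\mathbf{\whT}\subset\mathbf{\whG},\rho_{\ad}}]$ with the subring in Lemma \ref{ringVTrhoad}, together with the product identity of Lemma \ref{lem_q_multiplication}. Your explicit check that the exponent $\frac{\ell(\nu)+\ell(\nu')-\ell(\nu+\nu')}{2}=\lan (\nu+\nu')_- -\nu_- -\nu'_-,\rho_{\ad}\ran$ is a nonnegative integer, via $\nu_-+\nu'_-\le(\nu+\nu')_-$, fills in a point the paper leaves implicit there.
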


\subsection{The action of the Weyl group}

\begin{Pt*}\label{defWfaction}
Using the multiplication law on $V_{\mathbf{\whG}}$, the group $\mathbf{\whG}=V_{\mathbf{\whG},1}$ acts by conjugation on $V_{\mathbf{\whG}}$, and the monoid homomorphism $d:V_{\mathbf{\whG}}\ra \mathbf{\whT}_{\ad}^+$ is equivariant for the trivial $\mathbf{\whG}$-action on the base. Over 
$\mathbf{\whT}_{\ad}\subset \mathbf{\whT}_{\ad}^+$, it is the action by conjugation on the first factor of $V_{\mathbf{\whG}}|_{\mathbf{\whT}_{\ad}}=\mathbf{\whG}\times^{Z_{\mathbf{\whG}}}\mathbf{\whT}$. In particular the normalizer of $\mathbf{\whT}$ stabilizes $\mathbf{\whT}\times^{Z_{\mathbf{\whG}}}\mathbf{\whT}=V_{\mathbf{\whT\subset\whG}}|_{\mathbf{\whT}_{\ad}}$, hence $V_{\mathbf{\whT\subset\whG}}$, and its action factors through $W$. Base changing along $\rho_{\ad}^+$, we get an action of $W$ on $V_{\mathbf{\whT}\subset\mathbf{\whG},\rho_{\ad}}$ for which $d\rho_{\ad}:V_{\mathbf{\whT}\subset\mathbf{\whG},\rho_{\ad}}\ra \bbA^1$ is equivariant for the trivial $W$-action on the base. \end{Pt*}

\begin{Def*}
The action of $W$ on $\mathbf{\whT}\times\bbG_m$ such that the group isomorphism of $\bbG_m$-schemes
$$
 \xymatrix{
 i_1\cdot\fs\rho_{\ad}:\mathbf{\whT}\times\bbG_m \ar[r]^<<<<<{\sim} & V_{\mathbf{\whT}\subset\mathbf{\whG},\rho_{\ad}}|_{\bbG_m}
 }
 $$
 is $W$-equivariant is called the \emph{$W\bullet$-action}. 
\end{Def*}

\begin{Lem*} \label{Wfbulletinvar}
On the ring $\bbZ[\mathbf{\whT}\times\bbG_m]=\bbZ[\mathbf{\whT}]\otimes\bbZ[\bfq^{\pm1}]$, the $W\bullet$-action is:
$$
 \forall w\in W,\quad \quad w\bullet e^{\nu}\bfq^i=e^{w(\nu)}\bfq^{i+\lan w(\nu)-\nu,\rho_{\ad}\ran}.
$$
The subring of invariants is:
$$
\bbZ[\mathbf{\whT}\times\bbG_m]^{W\bullet}=\bigoplus_{\lambda\in X^*(\mathbf{\whT})^+}\bbZ[\bfq^{\pm1}]\sym_{\lambda}(\bfq),
$$
where 
$$
\forall \lambda\in X^*(\mathbf{\whT})^+,\quad\sym_{\lambda}(\bfq):=\sum_{\mu\in W(\lambda)}\bfq^{\lan\rho_{\ad},\mu-w_0(\lambda)\ran}e^{\mu}\quad \in \bbZ[\mathbf{\whT}]\otimes\bbZ[\bfq]=\bbZ[\mathbf{\whT}\times\bbA^1].
$$
In particular,
$$
\bbZ[\mathbf{\whT}\times\bbG_m]^{W\bullet}\cap\bbZ[\mathbf{\whT}\times\bbA^1]=\bigoplus_{\lambda\in X^*(\mathbf{\whT})^+}\bbZ[\bfq]\sym_{\lambda}(\bfq)=:\Sym(\bfq).
$$
\end{Lem*}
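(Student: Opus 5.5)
The plan is to compute the $W$-action on $V_{\mathbf{\whT}\subset\mathbf{\whG}}$ at the level of characters, transport it along the isomorphism $i_1\cdot\fs\rho_{\ad}$, and then read off invariants orbit by orbit.

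\textbf{Step 1: the action on characters.} Over $\mathbf{\whT}_{\ad}$ we have $V_{\mathbf{\whT}\subset\mathbf{\whG}}|_{\mathbf{\whT}_{\ad}}=\mathbf{\whT}\times^{Z_{\mathbf{\whG}}}\mathbf{\whT}$. By \ref{defWfaction}, a representative $\dot w\in N(\mathbf{\whT})$ acts on this group by conjugation on the first factor only, i.e. $[t_1,t_2]\mapsto[w(t_1),t_2]$. On functions this reads $w\cdot(e_1^{\nu_1}\otimes e_2^{\nu_2})=e_1^{w(\nu_1)}\otimes e_2^{\nu_2}$. The convention $w$ versus $w^{-1}$ is harmless, since only the $W$-set structure matters. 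This formula preserves the defining condition $\nu_2+(\nu_1)_-\in X^*(\mathbf{\whT}_{\ad})_{\pos}$, because $(w\nu_1)_-=(\nu_1)_-$. By density of the open part in the closure it gives the action on all of $\bbZ[V_{\mathbf{\whT}\subset\mathbf{\whG}}]$, and base change along $\rho_{\ad}^+$ gives the action on $V_{\mathbf{\whT}\subset\mathbf{\whG},\rho_{\ad}}$.

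\textbf{Step 2: transport along $i_1\cdot\fs\rho_{\ad}$.} Lemma \ref{ringVTrhoad}, localized at $\bfq$, says that $(i_1\cdot\fs\rho_{\ad})^*$ sends $e_1^{\nu_1}\otimes e_2^{\nu_2}$ to $e^{\nu_1}\bfq^{\lan\nu_1+\nu_2,\rho_{\ad}\ran}$. Fix $e^\nu\bfq^i$ and choose the preimage $e_1^{\nu}\otimes e_2^{\nu_2}$ with $\lan\nu+\nu_2,\rho_{\ad}\ran=i$. Acting by $w$ gives $e_1^{w\nu}\otimes e_2^{\nu_2}$, whose image is
\[
e^{w\nu}\bfq^{\lan w\nu+\nu_2,\rho_{\ad}\ran}=e^{w\nu}\bfq^{i+\lan w\nu-\nu,\rho_{\ad}\ran}.
\]
Over $\bbG_m$ every $e^\nu\bfq^i$ is attained, so this is exactly the claimed $W\bullet$-formula.

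\textbf{Step 3: the invariants.} The $W\bullet$-action permutes the $\bbZ[\bfq^{\pm1}]$-basis $\{e^\nu\}$ up to powers of $\bfq$, with $e^\nu$ going to a unit multiple of $e^{w\nu}$. Hence the invariants decompose along the $W$-orbits $W(\lambda)$, $\lambda$ dominant. For each orbit, $e^{w_0\lambda}$ is fixed up to a unit by the stabilizer: if $w\,w_0\lambda=w_0\lambda$ the exponent shift $\lan w\nu-\nu,\rho_{\ad}\ran$ vanishes. So the invariants supported on $W(\lambda)$ form the free rank-one $\bbZ[\bfq^{\pm1}]$-module generated by the orbit sum of $e^{w_0\lambda}$. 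By Step 2 that orbit sum is
\[
\sum_{\mu\in W(\lambda)}\bfq^{\lan\mu-w_0\lambda,\rho_{\ad}\ran}e^\mu=\sym_\lambda(\bfq).
\]

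\textbf{Step 4: intersection with $\bbZ[\mathbf{\whT}\times\bbA^1]$.} Since $w_0\lambda=\lambda_-$ is the minimum of its orbit for $\leq$, every exponent $\lan\mu-w_0\lambda,\rho_{\ad}\ran$ is $\ge0$, and the coefficient of $e^{w_0\lambda}$ is $1$. Thus $f\,\sym_\lambda(\bfq)$ with $f\in\bbZ[\bfq^{\pm1}]$ lies in $\bbZ[\bfq]\otimes\bbZ[\mathbf{\whT}]$ if and only if $f\in\bbZ[\bfq]$. This gives the description of $\Sym(\bfq)$.

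The main obstacle is Step 1: justifying rigorously that the normalizer acts only on the first tensor factor, and that this extends from the open torus to the closure. I will handle it by noting that $\bbZ[V_{\mathbf{\whT}\subset\mathbf{\whG}}]\subset\bbZ[\mathbf{\whT}\times^{Z_{\mathbf{\whG}}}\mathbf{\whT}]$ is an injective restriction, so any formula valid on the open subgroup determines the action on the closure.
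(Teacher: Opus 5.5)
Your proposal is correct and follows the paper's own proof: the paper also lets $w$ act on the first tensor factor $e_1^{\nu_1}$, transports this through Lemma \ref{ringVTrhoad} to get $w\bullet e^{\nu}=e^{w(\nu)}\bfq^{\lan w(\nu)-\nu,\rho_{\ad}\ran}$, and then reads off the invariants using that the coefficient of $e^{w_0(\lambda)}$ in $\sym_\lambda(\bfq)$ is $1$. Your Steps 3 and 4 simply write out the orbit-by-orbit invariant computation and the intersection with $\bbZ[\mathbf{\whT}\times\bbA^1]$, which the paper dismisses with ``the rest of the lemma follows''.
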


\begin{proof}
Let $w\in W$. By \ref{ringVTrhoad}, we have
$$
w\bullet(e^{\nu_1}\otimes\bfq^{\lan\nu_1+\nu_2,\rho_{\ad}\ran})=e^{w(\nu_1)}\otimes\bfq^{\lan w(\nu_1)+\nu_2,\rho_{\ad}\ran},
$$
hence
$$
w\bullet e^{\nu_1}=e^{w(\nu_1)}\otimes\bfq^{\lan w(\nu_1)-\nu_1,\rho_{\ad}\ran}.
$$
The rest of the lemma follows (in particular, note that the coefficient of $e^{w_0(\lambda)}$ in $\sym_{\lambda}(\bfq)$ is $1$).
\end{proof}



\begin{Pt*} For later purposes, we point out that 
the $W$-action on $V_{\mathbf{\whT}\subset\mathbf{\whG}, \rho_{\ad}}$ gives rise to the 
$\bbA^1$-scheme
$$
V_{\mathbf{\whT}\subset\mathbf{\whG},\rho_{\ad}}/W.
$$

\end{Pt*}

\subsection{The geometry of the semigroup $V_{\mathbf{\whT}\subset\mathbf{\whG},0}$}

In this subsection, we describe the geometry of the $0$-fibre $V_{\mathbf{\whT}\subset\mathbf{\whG},0}.$ Since the fibration $d:V_{\mathbf{\whT}\subset\mathbf{\whG}}\ra \mathbf{\whT}_{\ad}^+$ is a monoid homomorphism, the $0$-fiber $V_{\mathbf{\whT}\subset\mathbf{\whG},0}$ is a semigroup scheme over $\bbZ$, whose comultiplication is the specialization at $0$ of the one of the diagonalizable 
$\bbZ$-monoid scheme $V_{\mathbf{\whT}\subset\mathbf{\whG}}$.
\begin{Pt*}
For any Weyl chamber $\mathfrak{C}$ in $X^*(\mathbf{\whT})\otimes\bbR$, write $$X^*(\mathbf{\whT})^\fC:=X^*(\mathbf{\whT})\cap {\overline{\fC}}\hskip20pt \text{and}\hskip20pt \dsU_{\mathbf{\whG}}^{\fC}:=\Spec(\bbZ[X^*(\mathbf{\whT})^\fC]).$$
The affine variety $\dsU_{\mathbf{\whG}}^{\fC}$ is normal and contains a copy of $\Spec\bbZ$
given by the ring homomorphism $\bbZ[X^*(\mathbf{\whT})^\fC])\rightarrow\bbZ, e^\nu\mapsto 0$.

\vskip5pt 
Recall from \ref{Cor_basis} the $\bbZ[\bfq]$-basis $E(\nu)$ of $\bbZ[V_{\mathbf{\whT}\subset\mathbf{\whG}}]$ and the product formula 
$
E(\nu)E(\nu')=\bfq_{\nu,\nu'}E(\nu+\nu'),
$
with $\bfq_{\nu,\nu'}=\bfq^{\frac{\ell(\nu)+\ell(\nu')-\ell(\nu+\nu')}{2}}$. The latter quantity is equal to $1$ if there exists a Weyl chamber $\fC$ such that $\nu$ and $\nu'$ belong to the closure $\overline{\fC}$, and it is divisible by $\bfq$ otherwise. 
\vskip5pt

For simplicity, we use the same notation for the images of the $E(\nu)$ in the quotient ring $\bbZ[V_{\mathbf{\whT}\subset\mathbf{\whG},0}]$, i.e. we set 
$$
\forall\nu\in X^*(\mathbf{\whT}),\quad E(\nu):=(e_1^{\nu}\otimes e_2^{-\nu_-})|_{0}\in \bbZ[V_{\mathbf{\whT}\subset\mathbf{\whG},0}].
$$
The following proposition is then immediate.
\end{Pt*}
\begin{Prop*}\label{Prop_basis2} The elements $E(\nu)$ form
a $\bbZ$-basis of $\bbZ[V_{\mathbf{\whT}\subset\mathbf{\whG},0}]$, i.e.

$$
\bbZ[V_{\mathbf{\whT}\subset\mathbf{\whG},0}]=\bigoplus_{\nu\in X^*(\mathbf{\whT})}\bbZ E(\nu),
$$
with product formula 
$$
\forall \nu,\nu'\in X^*(\mathbf{\whT}), \quad E(\nu)E(\nu')=
\left\{ \begin{array}{ll}
E(\nu+\nu') & \textrm{if $\exists\fC: \nu,\nu' \in X^*(\mathbf{\whT})^\fC$}\\ 
0 & \textrm{otherwise}.
\end{array} \right.
$$
\end{Prop*}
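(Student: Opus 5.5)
The plan is to deduce the statement from Corollary \ref{Cor_basis} by base change along $\bfq\mapsto 0$, and then to evaluate the exponent $\frac{\ell(\nu)+\ell(\nu')-\ell(\nu+\nu')}{2}$.

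\textbf{Step 1 (basis).} By definition $V_{\mathbf{\whT}\subset\mathbf{\whG},0}$ is the fibre of $d\rho_{\ad}$ over $0\in\bbA^1$, so
$$\bbZ[V_{\mathbf{\whT}\subset\mathbf{\whG},0}]=\bbZ[V_{\mathbf{\whT}\subset\mathbf{\whG},\rho_{\ad}}]\otimes_{\bbZ[\bfq]}\bbZ[\bfq]/(\bfq).$$
Corollary \ref{Cor_basis} says that $\bbZ[V_{\mathbf{\whT}\subset\mathbf{\whG},\rho_{\ad}}]$ is free over $\bbZ[\bfq]$ with basis the $E(\nu)$. Tensoring a free module gives a free module, so the images $E(\nu)$ form a $\bbZ$-basis of the $0$-fibre.

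\textbf{Step 2 (product formula).} Reducing the identity $E(\nu)E(\nu')=\bfq_{\nu,\nu'}E(\nu+\nu')$ modulo $\bfq$ gives
$$E(\nu)E(\nu')=E(\nu+\nu')\ \text{if } \bfq_{\nu,\nu'}=1,\qquad E(\nu)E(\nu')=0\ \text{if } \bfq\mid\bfq_{\nu,\nu'}.$$
Since $\bfq_{\nu,\nu'}$ is a power of $\bfq$, one of these two cases always holds. So it remains to show that the exponent $\frac{\ell(\nu)+\ell(\nu')-\ell(\nu+\nu')}{2}$ is $0$ exactly when $\nu,\nu'$ lie in a common closed Weyl chamber, and positive otherwise.

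\textbf{Step 3 (the exponent; the main step).} The proof of Lemma \ref{lem_q_multiplication} shows that the exponent is proportional to
$$\langle (\nu+\nu')_- - \nu_- - \nu'_-,\,2\rho\rangle .$$
Fix $w\in W$ with $(\nu+\nu')_-=w(\nu+\nu')$. Then
$$(\nu+\nu')_- - \nu_- - \nu'_- = \bigl(w\nu-\nu_-\bigr)+\bigl(w\nu'-\nu'_-\bigr).$$
Each bracket is $\geq 0$ in the order of \ref{rhoad+}, because every $W$-conjugate of a weight dominates its antidominant conjugate, i.e. $\mu-\mu_-\in X^*(\mathbf{\whT}_{\ad})_{\pos}$. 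Pairing a nonzero element of $X^*(\mathbf{\whT}_{\ad})_{\pos}$ with $2\rho$ gives something strictly positive. Hence the exponent is $\geq 0$, with equality if and only if $w\nu=\nu_-$ and $w\nu'=\nu'_-$.

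It remains to identify this equality condition with a common chamber. If $w\nu$ and $w\nu'$ are both antidominant, then $\nu,\nu'$ lie in $w^{-1}$ of the closed antidominant chamber, which is a closed Weyl chamber $\overline{\fC}$. Conversely, suppose $\nu,\nu'\in\overline{\fC}$ and write $\overline{\fC}=u^{-1}(\text{antidominant closure})$. Then $u\nu$, $u\nu'$ and $u(\nu+\nu')$ are all antidominant. Taking $w=u$ gives equality, so the exponent is $0$. This also shows the exponent does not depend on the choice of $w$.

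The obstacle I expect is justifying the strict positivity. This uses the standard fact that $\langle\alpha,\rho\rangle>0$ for every positive root $\alpha$. I would rather pair with $\rho_{\ad}$ directly, as in Lemma \ref{ringVTrhoad}: $\rho_{\ad}$ takes the value $1$ on each simple root, so it is strictly positive on every nonzero element of $X^*(\mathbf{\whT}_{\ad})_{\pos}$.
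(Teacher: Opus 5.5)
Your proposal is correct and follows the paper's route: the paper also obtains the statement by reducing Corollary \ref{Cor_basis} modulo $\bfq$, and it uses without proof the fact that $\bfq_{\nu,\nu'}=1$ exactly when $\nu,\nu'$ lie in a common closed Weyl chamber and is divisible by $\bfq$ otherwise. Your Step 3 supplies a valid proof of that fact, via $\ell(\nu)+\ell(\nu')-\ell(\nu+\nu')=\langle (w\nu-\nu_-)+(w\nu'-\nu'_-),2\rho\rangle$ with $w(\nu+\nu')=(\nu+\nu')_-$ and strict positivity of $2\rho$ on nonzero elements of $X^*(\mathbf{\whT}_{\ad})_{\pos}$.
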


\begin{Lem*}
The additive map
\begin{eqnarray*}
\pr_\fC: \bbZ[V_{\mathbf{\whT}\subset\mathbf{\whG},0}] & \lra & \bbZ[\dsU_{\mathbf{\whG}}^\fC] \\
E(\nu) & \lmapsto & \left\{ \begin{array}{ll}
e^{\nu} & \textrm{if $\nu\in X^*(\mathbf{\whT})^\fC$}\\ 
0 & \textrm{otherwise}
\end{array} \right.
\end{eqnarray*}
is a surjective homomorphism of rings. Hence, it defines a closed immersion 
$ i_{\fC}: \dsU_{\mathbf{\whG}}^{\fC}\longrightarrow V_{\mathbf{\whT}\subset\mathbf{\whG},0}.$

\end{Lem*}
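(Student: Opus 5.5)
Since $\{E(\nu)\}_{\nu}$ is a $\bbZ$-basis by Proposition \ref{Prop_basis2}, the formula for $\pr_\fC$ defines a unique additive map. The proof then has three steps: check the unit, check multiplicativity on pairs of basis elements, and check surjectivity. Once these hold, $\pr_\fC$ is a surjective ring homomorphism, so $i_\fC=\Spec(\pr_\fC)$ is a closed immersion.

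\emph{Unit.} The unit of $\bbZ[V_{\mathbf{\whT}\subset\mathbf{\whG},0}]$ is $E(0)=(e_1^0\otimes e_2^0)|_0$. Since $0$ lies in every closed chamber, $\pr_\fC(E(0))=e^0=1$.

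\emph{Multiplicativity.} By bilinearity it suffices to show $\pr_\fC(E(\nu)E(\nu'))=\pr_\fC(E(\nu))\pr_\fC(E(\nu'))$ for all $\nu,\nu'$. We use the product formula of Proposition \ref{Prop_basis2} and split into cases.
\begin{enumerate}[(a)]
\item Both $\nu,\nu'\in X^*(\mathbf{\whT})^\fC$. The product is $E(\nu+\nu')$. Since $\overline{\fC}$ is a convex cone, $\nu+\nu'\in X^*(\mathbf{\whT})^\fC$. Both sides therefore equal $e^{\nu+\nu'}$.
\item At least one of $\nu,\nu'$ lies outside $\overline{\fC}$. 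The right-hand side is $0$. The left-hand side is either $0$ (when no common chamber exists), or $\pr_\fC(E(\nu+\nu'))$ when $\nu,\nu'$ share some other chamber $\fC'$.
\end{enumerate}
In the second situation of (b) we must show $\nu+\nu'\notin\overline{\fC}$. This is the only real point of the proof. Suppose instead that $\nu,\nu'\in\overline{\fC'}$ and $\nu+\nu'\in\overline{\fC}$, while, say, $\nu\notin\overline{\fC}$. Write $\overline{\fC}=\{x:\langle x,\alpha^\vee\rangle\ge0\ \forall\alpha^\vee\in\Delta^\vee_\fC\}$. Since $\nu\notin\overline{\fC}$, some simple coroot $\alpha^\vee$ of $\fC$ has $\langle\nu,\alpha^\vee\rangle<0$. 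Because $\langle\nu+\nu',\alpha^\vee\rangle\ge0$, it follows that $\langle\nu',\alpha^\vee\rangle>0$. So $\nu$ and $\nu'$ lie strictly on opposite sides of the wall hyperplane $\alpha^\perp$. This contradicts $\nu,\nu'\in\overline{\fC'}$, since a closed chamber lies in one closed half-space of every root hyperplane. Hence $\nu+\nu'\notin\overline{\fC}$ and the left-hand side is $0$. (Alternatively, one can use the length criterion $\ell(\nu)+\ell(\nu')=\ell(\nu+\nu')$ from Lemma \ref{lem_q_multiplication} and argue with $\nu_-$, but the half-space argument is cleaner.)

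\emph{Surjectivity.} Every basis monomial $e^{\nu}$ of $\bbZ[\dsU_{\mathbf{\whG}}^\fC]$ with $\nu\in X^*(\mathbf{\whT})^\fC$ is the image of $E(\nu)$, so $\pr_\fC$ is onto.

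Geometrically, the kernel of $\pr_\fC$ is spanned by the $E(\nu)$ with $\nu\notin\overline{\fC}$, and the multiplicativity step amounts to checking that this span is an ideal.
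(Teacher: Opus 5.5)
Your proof is correct and takes the same route as the paper: the paper's own proof simply says multiplicativity is ``clear from the product formula'' and that surjectivity is ``clear''. You also spell out the one point the paper skips, namely that if $\nu,\nu'$ share a closed chamber $\overline{\fC'}$ but are not both in $\overline{\fC}$, then $\nu+\nu'\notin\overline{\fC}$, and your half-space argument for this is sound.
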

\begin{proof} The compatibility with the ring multiplication is clear from the product formula 
of the $E(\nu)$. Surjectivity is clear. 
\end{proof}

\begin{Prop*} \begin{enumerate}
\item The affine variety $V_{\mathbf{\whT}\subset\mathbf{\whG},0}$ is reduced. 
\item $\cap_{\fC}  \dsU_{\mathbf{\whG}}^{\fC}=\Spec\bbZ.$
\item $\cup_{\fC}  \dsU_{\mathbf{\whG}}^{\fC}= V_{\mathbf{\whT}\subset\mathbf{\whG},0}.$
\item The $\dsU_{\mathbf{\whG}}^{\fC}$ are the irreducible components of $V_{\mathbf{\whT}\subset\mathbf{\whG},0}$.

\end{enumerate}
\end{Prop*}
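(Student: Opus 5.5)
The plan is to work entirely with the $\bbZ$-basis $E(\nu)$ and the product formula of Proposition \ref{Prop_basis2}, together with the closed immersions $i_\fC$ given by the surjections $\pr_\fC$. The central object is the product map
$$\Phi:=\prod_\fC \pr_\fC:\ \bbZ[V_{\mathbf{\whT}\subset\mathbf{\whG},0}]\lra \prod_{\fC}\bbZ[\dsU_{\mathbf{\whG}}^\fC],$$
a ring homomorphism, with the product taken over the finitely many Weyl chambers. I will show that $\Phi$ is injective. Given $f=\sum_\nu a_\nu E(\nu)$ with $\Phi(f)=0$, fix $\nu$. It lies in some closed chamber $\overline{\fC}$, so the coefficient of $e^\nu$ in $\pr_\fC(f)$ is exactly $a_\nu$. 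Since $\pr_\fC$ sends distinct $E(\nu')$ with $\nu'\in X^*(\mathbf{\whT})^\fC$ to distinct monomials $e^{\nu'}$, we get $a_\nu=0$. Hence $\bbZ[V_{\mathbf{\whT}\subset\mathbf{\whG},0}]$ embeds into a finite product of monoid rings $\bbZ[X^*(\mathbf{\whT})^\fC]$.

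Each such monoid ring is a subring of the Laurent polynomial ring $\bbZ[X^*(\mathbf{\whT})]$, and is therefore a domain. This immediately gives (1): a subring of a product of domains is reduced.

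For (3), injectivity of $\Phi$ means that the intersection of the kernels $\ker\pr_\fC$ is zero. Hence the union of the closed subschemes $V(\ker\pr_\fC)=i_\fC(\dsU^\fC_{\mathbf{\whG}})$ is the whole scheme, as a scheme-theoretic union in fact, not just set-theoretically.

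For (2), I compute the scheme-theoretic intersection, whose ideal is $\sum_\fC\ker\pr_\fC$. The kernel $\ker\pr_\fC$ is spanned by the $E(\nu)$ with $\nu\notin\overline{\fC}$. The key point is that every $\nu\neq0$ lies outside some closed chamber: if $\nu\neq 0$, then $w_0$ applied to the chamber containing $\nu$ gives a chamber whose closure meets $\overline{\fC}$ only in the face fixed by $w_0$. More simply, $\nu$ and $-\nu$ cannot lie in a common closed chamber unless $\nu$ is orthogonal to all roots. So I will treat this in two cases.
\begin{itemize}
\item If $\nu$ is non-central, $\nu$ is not fixed by $W$. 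It fails to lie in $\overline{w\fC}$ for a suitable $w$, since the set of closed chambers containing $\nu$ is exactly $\{w\fC:\ w\in\mathrm{Stab}\}$-orbit-sized, which is a proper subset when $\nu$ is non-central.
\item If $\nu$ is central (in $X^*(\mathbf{\whT})^W$), then $\nu$ lies in every closed chamber.
\end{itemize}
So the intersection is $\Spec$ of $\bbZ[X^*(\mathbf{\whT})^W]$, which is $\Spec\bbZ$ only when $\mathbf{\whG}$ is semisimple. I expect this to be the main subtlety. I will check how the paper intends it: either $\Spec\bbZ$ is meant as the copy defined by $e^\nu\mapsto 0$ (which requires $0\notin$ cone interior issues), or the statement implicitly assumes trivial central characters. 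Accordingly, I would either prove the intersection equals the common point $e^\nu\mapsto0$ for $\nu\neq0$ set-theoretically inside the torus directions, or adjust the statement to $\Spec\bbZ[X^*(\mathbf{\whT})^W]$. I would first try to verify it set-theoretically in the semisimple case, then remark on the center.

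For (4), each $\dsU^\fC_{\mathbf{\whG}}$ is irreducible, since its coordinate ring is a domain, and by (3) they cover the scheme. It remains to show there are no inclusions between them. For $\fC\neq\fC'$, pick a regular $\nu\in\fC\setminus\overline{\fC'}$. Then $E(\nu)$ vanishes on $\dsU^{\fC'}$ but not on $\dsU^\fC$, because its image there is $e^\nu$, which is a nonzero element of a domain and hence not nilpotent. So $\dsU^\fC\not\subset\dsU^{\fC'}$, and the $\dsU^\fC$ are exactly the irreducible components. The relative dimension counts are equal anyway, since each $\dsU^\fC$ has the dimension of $\mathbf{\whT}$, which confirms that none is redundant.
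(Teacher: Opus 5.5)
Your arguments for (1), (3) and (4) are correct. (1) is the paper's argument. For (3) you take a shorter route than the paper. The paper fixes a maximal ideal $\mathfrak m$ and shows that $S=\{\nu: E(\nu)\notin\mathfrak m\}$ has constant sign against each coroot. It then reduces to the fact that such a set lies in a single closed chamber, which gives $\ker\pr_\fC\subseteq\mathfrak m$. You note instead that injectivity of $\Phi$, already proved for (1), says exactly $\bigcap_\fC\ker\pr_\fC=0$. Hence $V_{\hT\subset\hG,0}=V(0)=\bigcup_\fC V(\ker\pr_\fC)$, even scheme-theoretically. This avoids both the convexity step and the passage from maximal ideals to arbitrary points. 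For (4) the paper only cites Bourbaki, while you supply the pairwise non-inclusion via a regular weight. Equivalently, the $\ker\pr_\fC$ are pairwise incomparable primes with zero intersection, hence exactly the minimal primes.

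On (2) your computation is right, and it is the statement that needs correcting, not your proof. The ideal of the intersection is $\sum_\fC\ker\pr_\fC$, which is spanned by the $E(\nu)$ with $\nu\notin\bigcap_\fC\overline{\fC}$. Every root hyperplane has chambers on both sides, so $\nu\in\bigcap_\fC\overline{\fC}$ iff $\lan\nu,\alpha^\vee\ran=0$ for all coroots, iff $\nu\in X^*(\hT)^W$. Hence $\bigcap_\fC\dsU^\fC_{\hG}=\Spec\bbZ[X^*(\hT)^W]$, the closed $\hT$-orbit $\dsS_{\hG}$ of the appendix, a torus isomorphic to $\hG^{\ab}$.

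The paper's one-line justification ``$\cap\fC=\{0\}$'' holds only when $X^*(\hT)^W=0$, i.e.\ when $\hG$ is semisimple. For $\hG=GL_2$, write $X^*(\hT)=\bbZ^2$, $x=E((1,0))$, $y=E((0,1))$, $t=E((1,1))$. Then $\bbZ[V_{\hT\subset\hG,0}]\simeq\bbZ[x,y,t^{\pm1}]/(xy)$, and the two components meet in $\Spec\bbZ[t^{\pm1}]\simeq\bbG_m$, not in $\Spec\bbZ$. The same condition governs the ``copy of $\Spec\bbZ$'' given by $e^\nu\mapsto 0$: it is a ring map (with $e^0\mapsto 1$) only if $X^*(\hT)^\fC\setminus\{0\}$ is closed under addition.

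So commit to the corrected statement, which specializes to $\Spec\bbZ$ in the semisimple case, and drop your fallback options. No set-theoretic reading rescues the claim: over an algebraically closed field $k$ the intersection has $\Hom(X^*(\hT)^W,k^\times)$ as its set of $k$-points. Also delete the sentence ``every $\nu\neq0$ lies outside some closed chamber'' and the $w_0$ remark, since both are false for nonzero $W$-invariant $\nu$. The criterion above replaces your two-case discussion.
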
 
\begin{proof} 1. All the rings $\bbZ[X^*(\mathbf{\whT})^\fC]$ are integral domains. It follows from the elementary fact $\cup \fC= X^*(\mathbf{\whT})$ that the product map 
$\bbZ[V_{\mathbf{\whT}\subset\mathbf{\whG},0}]
\rightarrow \prod_{\fC} \bbZ[X^*(\mathbf{\whT})^\fC]$ is injective. Hence the source is reduced. 2. follows from the fact that $\cap\fC=\{0\}$. For 3. let $\mathfrak{m}\subseteq V_{\mathbf{\whT}\subset\mathbf{\whG},0}$ be a maximal ideal. We have to show $\ker\pr_{\fC}\subseteq \mathfrak{m}$ for some $\fC$ or, equivalently, $\bbZ[V_{\mathbf{\whT}\subset\mathbf{\whG},0}]\setminus \mathfrak{m} \subseteq \bbZ[V_{\mathbf{\whT}\subset\mathbf{\whG},0}] \setminus \ker\pr_{\fC}$.
We claim that it suffices to see that the set $S:=\{\nu: E(\nu)\notin\mathfrak{m}\}$ is contained in some $\overline{\fC}$. Assume this is the case and let $E\in\bbZ[V_{\mathbf{\whT}\subset\mathbf{\whG},0}]\setminus \mathfrak{m}$. Write $E=\sum_\nu a_\nu E(\nu)$ with coefficients $a_\nu\in\bbZ$. Then $a_\nu\neq 0$ for some $\nu\in S$ and therefore $\pr_{\fC}(E)\neq 0$. To verify the claim, it suffices to see that for any coroot $\alpha$ of $(\mathbf{\whG},\mathbf{\whT})$ we either have 
$\langle S,\alpha\rangle \leq 0$ or $\langle S,\alpha\rangle \geq 0$. Given $\nu,\nu'\in S$, we have
$E(\nu)E(\nu')\notin\mathfrak{m}$. This implies $E(\nu)E(\nu')\neq 0$ and so $\nu,\nu'$ must lie in the closure of some common Weyl chamber. In particular, for any $\alpha$ the two values $\langle \nu,\alpha\rangle$ and $\langle \nu',\alpha\rangle$ both are nonnegative or nonpositive. Now fix $\alpha$ and assume that $\langle S,\alpha\rangle\neq \{0\}$. Choose $\nu\in S$ such that  $\langle \nu,\alpha\rangle<0$ or $\langle \nu,\alpha\rangle>0$. For any other $\nu'\in S$ the previous discussion shows that in the first case $\langle \nu',\alpha\rangle\leq 0$ and in the second case $\langle \nu',\alpha\rangle\geq 0$. Hence in the first case $\langle S,\alpha\rangle \leq 0$ and in the second case $\langle S,\alpha\rangle \geq 0$. 4. follows from 3. together with [B-CA], II. 4.1 Prop. 6.
\end{proof}
It follows from the proposition that the $W$-action on $V_{\mathbf{\whT}\subset\mathbf{\whG},0}$ permuts its irreducible components in a simply transitive manner. 
\begin{Lem*} \label{lem-central}
Let $\fC$ be a Weyl chamber. For any $x\in X^*(\hT)^\fC$, let 
$$Z(x):=\sum_{ x'\in W_{\fin}.x} E(x').$$ 
The $Z(x)$ form a $\bbZ$-basis of $\bbZ[V_{\hT\subset\hG,0}]^{W}$. 
\end{Lem*}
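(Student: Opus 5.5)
The plan is to determine how $W$ acts on the basis $E(\nu)$ of $\bbZ[V_{\hT\subset\hG,0}]$ from Proposition \ref{Prop_basis2}. Once that action is known, the statement is the standard fact that orbit sums span the invariants of a permutation module.

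\textbf{Step 1: $W$ permutes the basis.} I claim $w\cdot E(\nu)=E(w(\nu))$ for all $w\in W$ and $\nu\in X^*(\hT)$. The action on $V_{\hT\subset\hG}$ comes from conjugation by $N(\hT)$ on the first factor of $\hT\times^{Z_{\hG}}\hT$ (see \ref{defWfaction}). On characters it therefore sends $e_1^{\nu_1}\otimes e_2^{\nu_2}$ to $e_1^{w(\nu_1)}\otimes e_2^{\nu_2}$. Since $(w\nu)_-=\nu_-$, we get
$$w\cdot(e_1^{\nu}\otimes e_2^{-\nu_-})=e_1^{w\nu}\otimes e_2^{-(w\nu)_-}.$$
This is compatible with the formula of Lemma \ref{Wfbulletinvar}. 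Restricting along $\rho_{\ad}$ and then to the fibre $\bfq=0$ gives $w\cdot E(\nu)=E(w\nu)$ in $\bbZ[V_{\hT\subset\hG,0}]$. The action commutes with the base change because $W$ acts trivially on $\bbA^1$.

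\textbf{Step 2: the orbit sums form a basis of the invariants.} An element $f=\sum_\nu a_\nu E(\nu)$ is $W$-invariant exactly when $a_{w\nu}=a_\nu$ for all $w$ and $\nu$, because the $E(\nu)$ form a $\bbZ$-basis permuted by $W$. So the invariants are the finite $\bbZ$-combinations of the orbit sums $\sum_{x'\in W.x}E(x')$. These orbit sums have disjoint supports, hence they are linearly independent.

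\textbf{Step 3: indexing by the chamber.} Every $W$-orbit in $X^*(\hT)$ meets $\overline{\fC}$ in exactly one point, since $\overline{\fC}$ is a fundamental domain for $W$. Hence the map $x\mapsto Z(x)$ on $X^*(\hT)^\fC$ is a bijection onto the set of orbit sums. Here $W_{\fin}$ is read as $W$.

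\textbf{Main obstacle.} The only real point is Step 1, namely checking that the $W$-action in \ref{defWfaction} is the permutation $E(\nu)\mapsto E(w\nu)$ with no extra power of $\bfq$ or sign. I would verify this on the coordinate ring $\bbZ[\hT]\otimes\bbZ[\hT]$ before restricting to the fibre over $0$. Any power $\bfq^{\lan w\nu-\nu,\rho_{\ad}\ran}$ that appears in the $W\bullet$-picture of Lemma \ref{Wfbulletinvar} is absorbed by the normalization of $E(\nu)$ through $-\nu_-$.
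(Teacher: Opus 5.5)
Your proposal is correct and follows the paper's argument. Linear independence of the $Z(x)$ comes from $X^*(\hT)^\fC$ being a fundamental domain for $W$, and spanning comes from the fact that $W$-invariance of $\sum_\nu a_\nu E(\nu)$ forces $a_{w\nu}=a_\nu$. The paper uses $w\cdot E(\nu)=E(w\nu)$ tacitly (it is the computation in the proof of Prop.~\ref{Bernsteinequiv}); your Step 1 checks it explicitly.
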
 
\begin{proof}
According to the above proposition $V_{\hT\subset\hG,0}=\cup_{\fC} \dsU_{\mathbf{\whG}}^{\fC}$
is its decomposition into irreducible components, where $\dsU_{\mathbf{\whG}}^{\fC}
=\Spec \bbZ[X^*(\hT)^\fC ]$.
Now $X^*(\hT)^\fC$ is a fundamental domain for the action of $W$ on 
$X^*(\hT)$, so the $Z(x)$ are linearly independent elements. They are clearly $W$-invariant. Conversely, let $y\in \bbZ[V_{\hT\subset\hG,0}]^{W}$, say 
$y=\sum_{\nu} a_{\nu} E(\nu)$. Given $w\in W$, then $y=w(y)=\sum_{\nu} a_{\nu} E(w\nu)$ which implies 
$a_{\nu}=a_{w\nu}$ for all $\nu$. This implies that $y$ lies in the span of the $Z(x)$. 
\end{proof}

\begin{Cor*}\label{sectiontoGIT}
Let $\fC$ be a Weyl chamber. The composed morphism of $\bbZ$-schemes
$$
\xymatrix{
\dsU_{\mathbf{\whG}}^{\fC} \ar@{^{(}->}[r] & V_{\mathbf{\whT}\subset\mathbf{\whG},0} \ar@{->>}[r] & V_{\mathbf{\whT}\subset\mathbf{\whG},0}/W
}
$$
is an isomorphism.
\end{Cor*}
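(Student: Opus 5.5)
We work on rings of functions. The quotient $V_{\hT\subset\hG,0}/W$ is $\Spec \bbZ[V_{\hT\subset\hG,0}]^W$, so the composed morphism corresponds to the ring homomorphism
$$\phi:=\pr_\fC|_{\bbZ[V_{\hT\subset\hG,0}]^W}:\ \bbZ[V_{\hT\subset\hG,0}]^W\lra \bbZ[X^*(\hT)^\fC].$$
It suffices to show that $\phi$ is bijective. It is a ring homomorphism because $\pr_\fC$ is one.

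The plan is to compute $\phi$ on the $\bbZ$-basis $\{Z(x)\}_{x\in X^*(\hT)^\fC}$ of Lemma \ref{lem-central}. For $x\in X^*(\hT)^\fC$ we have $Z(x)=\sum_{x'\in W.x}E(x')$. By definition, $\pr_\fC$ kills every $E(x')$ with $x'\notin X^*(\hT)^\fC$ and sends $E(x')$ to $e^{x'}$ otherwise. Since $\overline{\fC}$ is a fundamental domain for $W$ acting on $X^*(\hT)\otimes\bbR$, each orbit $W.x$ meets $\overline{\fC}$ in exactly one point, namely $x$. Hence
$$\phi(Z(x))=e^{x}.$$

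Thus $\phi$ maps the $\bbZ$-basis $\{Z(x)\}_{x\in X^*(\hT)^\fC}$ bijectively onto the $\bbZ$-basis $\{e^x\}_{x\in X^*(\hT)^\fC}$ of $\bbZ[X^*(\hT)^\fC]$. So $\phi$ is an isomorphism of abelian groups, and therefore of rings, and the composed morphism of schemes is an isomorphism.

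The only point needing care is the fundamental-domain fact: we need uniqueness of the representative in the \emph{closed} chamber, not merely that each orbit meets $\overline{\fC}$. If $x,wx\in\overline{\fC}$, the standard result (Bourbaki, Lie V §3) gives $wx=x$, since $w$ is a product of simple reflections fixing $x$. So the orbit sum contributes exactly one term, with coefficient $1$.
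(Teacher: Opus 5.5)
Your proposal is correct and follows the paper's own argument: the paper likewise observes that the ring map restricted to invariants sends the basis element $Z(x)$ of Lemma \ref{lem-central} to $e^{x}$ and is therefore bijective. Your added remark that each $W$-orbit meets the closed chamber $\overline{\fC}$ in exactly one point is precisely the fundamental-domain fact that the paper uses implicitly.
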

\begin{proof} The corresponding algebra homomorphism maps $Z(x)$ to $E(x)$, hence is bijective.
\end{proof}

\begin{Pt*}
Recall the Vinberg section
 $$\fs\rho_{\ad}^+: \bbA_1 \rightarrow V_{\mathbf{\whT}\subset\mathbf{\whG},\rho_{\ad}}.$$
The point $\fs\rho_{\ad}^+(0)$ lies on some irreducible component of $V_{\mathbf{\whT}\subset\mathbf{\whG},0}$, which we view thereby as a "marked" component. To make this precise, let us consider the antidominant chamber $\fC^{-}$ and abbreviate 
$$\dsU_{\mathbf{\whG}}:= \dsU_{\mathbf{\whG}}^{\fC^{-}}=\bbZ[X^*(\mathbf{\whT})^{-}].$$
 \end{Pt*}
\begin{Prop*} The unit of the monoid $\dsU_{\mathbf{\whG}}$ is mapped via $i_{\fC^{-}}$ to  $\fs\rho_{\ad}^+(0)$.
\end{Prop*}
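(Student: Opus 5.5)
The plan is to compare the two $\bbZ$-points of $V_{\mathbf{\whT}\subset\mathbf{\whG},0}$ directly, through the ring homomorphisms $\bbZ[V_{\mathbf{\whT}\subset\mathbf{\whG},0}]\ra\bbZ$ they induce. By Prop. \ref{Prop_basis2} the elements $E(\nu)$, $\nu\in X^*(\mathbf{\whT})$, form a $\bbZ$-basis. So it suffices to show that the two homomorphisms agree on every $E(\nu)$.

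\textbf{The unit side.} The monoid $\dsU_{\mathbf{\whG}}=\Spec\bbZ[X^*(\mathbf{\whT})^-]$ is diagonalizable. Its unit is therefore the counit $\bbZ[X^*(\mathbf{\whT})^-]\ra\bbZ$, $e^\nu\mapsto 1$. Composing with $\pr_{\fC^-}$, which defines $i_{\fC^-}$, gives the homomorphism
$$E(\nu)\mapsto 1 \text{ if } \nu\in X^*(\mathbf{\whT})^-, \qquad E(\nu)\mapsto 0 \text{ otherwise}.$$

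\textbf{The Vinberg section side.} The homomorphism $\fs$ sends $e_1^{\nu_1}\otimes e_2^{\nu_2}$ to $\bar e^{\nu_1+\nu_2}$. Composing with $(\rho_{\ad}^+)^*$ sends this to $\bfq^{\lan\nu_1+\nu_2,\rho_{\ad}\ran}$. Hence $(\fs\rho^+_{\ad})^*$ maps $E(\nu)=(e_1^\nu\otimes e_2^{-\nu_-})|_{\rho_{\ad}}$ to $\bfq^{\lan \nu-\nu_-,\rho_{\ad}\ran}$. Specializing at $\bfq=0$, the point $\fs\rho_{\ad}^+(0)$ corresponds to $E(\nu)\mapsto 0^{\lan\nu-\nu_-,\rho_{\ad}\ran}$, with the convention $0^0=1$.

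\textbf{The key comparison.} It remains to show that $\lan\nu-\nu_-,\rho_{\ad}\ran=0$ if and only if $\nu$ is antidominant. This is the only step with content.
\begin{itemize}
\item Since $\nu\geq\nu_-$, we can write $\nu-\nu_-=\sum_\alpha n_\alpha\alpha$ with $n_\alpha\in\bbN$, the sum running over the simple roots.
\item The cocharacter $\rho_{\ad}$ pairs to $1$ with each simple root, so $\lan\nu-\nu_-,\rho_{\ad}\ran=\sum_\alpha n_\alpha$.
\item This sum vanishes exactly when $\nu=\nu_-$, i.e. when $\nu\in X^*(\mathbf{\whT})^{-}=X^*(\mathbf{\whT})\cap\overline{\fC^-}$.
\end{itemize}
The two homomorphisms therefore coincide on the basis $E(\nu)$, and hence on all of $\bbZ[V_{\mathbf{\whT}\subset\mathbf{\whG},0}]$. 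This is the claim.

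I expect no real obstacle. The only points needing care are bookkeeping: that the unit of a diagonalizable monoid is given by $e^\nu\mapsto1$, and that the restriction to $\bfq=0$ is compatible with the chosen basis $E(\nu)$ of Prop. \ref{Prop_basis2}.
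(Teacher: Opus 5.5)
Your proposal is correct and follows the paper's proof. Both arguments compare the two $\bbZ$-points on the basis $E(\nu)$: the unit gives $E(\nu)\mapsto 1$ for antidominant $\nu$, and $\fs\rho_{\ad}^+(0)$ gives $E(\nu)\mapsto 0^{\lan \nu-\nu_-,\rho_{\ad}\ran}$. Your extra check that $\lan\nu-\nu_-,\rho_{\ad}\ran>0$ for non-antidominant $\nu$, so that both maps kill $E(\nu)$ there, makes explicit a step the paper's ``reduced to'' sentence leaves implicit.
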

\begin{proof}
The unit of the monoid $\dsU_{\mathbf{\whG}}$ is given by the ring homomorphism
$\bbZ[\dsU_{\mathbf{\whG}}]\rightarrow \bbZ, e^\nu\mapsto 1$. In turn, $\fs\rho_{\ad}^+(0)$
is given by the composition of the homomorphism 
$\bbZ[V_{\mathbf{\whT}\subset\mathbf{\whG},0}]\rightarrow \bbZ[\bfq], E(\nu)\mapsto \bfq^{\langle \nu-\nu_{-},\rho_{\ad}\rangle}$ together with $\bfq=0$. So we are reduced to show that the latter composite map sends $E(\nu)$ to $1$ whenever $\nu\in X^*(\mathbf{\whT})^-$. But this is clear, since $\nu=\nu_{-}$ in this case. 
\end{proof}

\begin{Pt*} 
We refer to the appendix for some basic information and notation on the scheme $\dsU_{\mathbf{\whG}}$.
In particular, it is toric scheme over $\bbZ$ for the torus $\hT$. Its stratification by $\hT$-orbits 
$\dsS_{\mathbf{\whL}}$ is naturally indexed by the standard Levi subgroups $\mathbf{\whL}\in\widehat{\cL}$ in $\hG$. Each orbit $\dsS_{\mathbf{\whL}}$ is a torus and contains a distinguished point $e_{\hL}$, the multiplicative unit element of the torus. The resulting map 
$\hT\rightarrow \dsS_{\mathbf{\whL}}, x\mapsto x.e_{\hL}$ fits into 
the short exact sequence 
$$ 1 \longrightarrow \prod_{I} \bbG_m\stackrel{\prod_I \alpha}{\longrightarrow}\hT\longrightarrow \dsS_{\mathbf{\whL}}\rightarrow 1.$$ 
Here, $\hL=\hL(\widehat{I})$ with corresponding set of coroots $\alpha\in I \subset X_*(\hT)$. Restricting characters of the torus $\hL^{\ab}=\hL/\hL^{\der}$ along the map $\hT\rightarrow \hL^{\ab}$ yields a canonical isomorphism of tori $\hL^{\ab}\simeq  \dsS_{\mathbf{\whL}}$. We use this as an identification in what follows, in particular we write 
$$ \dsU_{\hG}=\bigcup_{\hL\in\widehat{\cL}} \hL^{\ab}.$$

\end{Pt*}
\begin{Pt*} \label{Pt_par_stab}
Recall that $W$ acts simply transitive on the set of irreducible components of $V_{\hT\subset\hG,0}$. We put the orbit stratification from the "marked"  component
$\dsU_{\mathbf{\whG}}$ on all other components by setting 
$$V_{\hT\subset\hG,0,\hL}:=\bigcup_{w\in W} w\hL^{\ab}.$$
One obtains a stratification into locally closed subvarieties  $$V_{\hT\subset\hG,0}=\bigcup_{\hL\in\widehat{\cL}} 
V_{\hT\subset\hG,0,\hL}.$$ By construction, each stratum 
$V_{\hT\subset\hG,0,\hL}$ is $W$-stable. 

\vskip5pt 
Let $W_\hL\subset W$ be the parabolic subgroup of $W$ associated with $\hL$. 
Let $w\in W$. 
Then $$w\hL^{\ab}=\hL^{\ab} \hskip10pt \text{if and only if}\hskip10pt w\in W_\hL.$$
Indeed, if $\hL=\hL(\widehat{I})$, then $w\hL^{\ab}=\hL^{\ab}$ if and only if $w$ stabilizes the simplex of the Coxeter complex of $W$ defined by the subset $\widehat{I}\subset\widehat{\Delta}$.
\end{Pt*}

\subsection{The augmented semigroup $V^{(1)}_{\mathbf{\whT}\subset\mathbf{\whG},0,\bbF_q}$ over a finite field}\label{subsec_augmented_semi}
Let $q$ be a power of a prime $p$. Denote by $\bbF_q$ the finite field with $q$ elements.\begin{Pt*} Denote by $\hT_{\bbF_q}$ the base change of $\hT$ to $\bbF_q$.
Let $F_q$ be the $q$-Frobenius $x\mapsto x^q$ on $\hT_{\bbF_q}$ and let 
$(\hT_{\bbF_q})^{F_q}$ be its fixed points. As as set $(\hT_{\bbF_q})^{F_q}=\hT(\bbF_q)$, the set of $\bbF_q$-valued points of $\hT$, and we will use this notation from now on. 
Note that $\hT(\bbF_q)$ is a finite diagonalizable group scheme over $\bbF_q$
with group of characters $(\bbZ/(q-1)\bbZ)^r$, where $r$ is the rang of $\hT$. Since $q-1\in\bbF_q^\times$, the group scheme is étale and even constant. 
Since $W$ commutes with $F_q$, $\hT(\bbF_q)$ is stable under the $W$-action. 
\end{Pt*}

\begin{Pt*} We denote by $$V_{\hT\subset\hG,0,\bbF_q}:=V_{\hT\subset\hG,0}\otimes_{\Spec\bbZ} \Spec(\bbF_q)$$
the base change of $V_{\hT\subset\hG,0}$ to $\bbF_q$.
Consider the augmented semigroup over $\bbF_q$
$$V^{(1)}_{\hT\subset\hG,0,\bbF_q}:=\hT(\bbF_q)\times_{\bbF_q} V_{\hT\subset\hG,0,\bbF_q}=
\hT(\bbF_q)\times_{\bbZ} V_{\hT\subset\hG,0}.
$$
It has its diagonal $W$-action. The corresponding quotient 
$$ S:= V^{(1)}_{\hT\subset\hG,0,\bbF_q}/W$$
generalizes the Satake scheme, denoted $S(q)$, in the $GL_2$-case from \cite[Def. 5.2]{PS25}.
\end{Pt*}
\begin{Pt*}\label{Pt_stratification}
Let $\hL\in \widehat{\cL}$ be a standard Levi subgroup in $\hG$. One sets
$$V^{(1)}_{\hT\subset\hG,0,\hL}:=\hT(\bbF_q)\times_{\bbZ} V_{\hT\subset\hG,0,\hL}.$$ This yields a 
$W$-invariant stratification of the $\bbF_q$-scheme $V^{(1)}_{\hT\subset\hG,0,\bbF_q}$
$$V^{(1)}_{\hT\subset\hG,0,\bbF_q}=\bigcup_{\hL\in\widehat{\cL}} 
V^{(1)}_{\hT\subset\hG,0,\hL}.$$ 
\end{Pt*}
\begin{Pt*} 
Let $\hL\in \widehat{\cL}$ be a standard Levi subgroup in $\hG$ with Weyl group $W_\hL\subset W$. Define the quotient scheme 
$$S_\hL:= V^{(1)}_{\hT\subset\hG,0,\hL}/W.$$
Thus,
$$
S =\bigcup_{\hL\in\widehat{\cL}} S_{\hL}.
$$

The scheme $S_{\hL}$ is a disjoint union of copies $\hL^{\ab}$ indexed by the set $\hT(\bbF_q)/W_{\hL}$ of $W_{\hL}$-orbits in $\hT(\bbF_q)$. 
Indeed, one has 
$$V^{(1)}_{\hT\subset\hG,0,\hL}=W.\hL^{\ab}$$
with ${\rm Stab}_W(\hL^{\ab})=W_\hL$, cf. \ref{Pt_par_stab}, 
whence $$S_{\hL}=(\hT(\bbF_q)\times W.\hL^{\ab})/W=\hT(\bbF_q)/W_\hL \times \hL^{\ab}.$$

\end{Pt*}
\section{Tame Galois representations}

Let $p$ be a prime. Fix an algebraic closure $\overline{\bbQ}_p$ of $\bbQ_p$ and denote by $\overline{\bbF}_p$ its residue field. For all $n$, denote by $\bbQ_{p^n}\subset \overline{\bbQ}_p$ the unique subfield that is unramified and of degree $n$ over $\bbQ_p$. Let $\bbF_{p^n}\subset\overline{\bbF}_p$ denote the unique subfield of cardinality $p^n$.

\vskip5pt 
Let $I_p$ be the inertia subgroup of ${\rm Gal}(\overline{\bbQ}_p/ \bbQ_p).$ 
Let $P_p\subset I_p$ be the wild ramification group. Let $q$ some fixed power of $p$.
Write $\cG_q:={\rm Gal}(\overline{\bbQ}_p/ \bbQ_q)$, with inertia subgroup $I_p$. Set $\cG^t_q:=\cG_q/P_p$ and $I_p^t:=I_p/P_p$. By an arithmetic Frobenius element $\varphi\in \cG_q$ we mean an element lifting the Frobenius $x\mapsto x^q$ in 
${\rm Gal}(\overline{\bbF}_p/ \bbF_q)$.

\vskip5pt
To simplify notation, we will write in the following $\mathbf{\whG},\mathbf{\whT}$ etc. for the base change of these $\bbZ$-schemes to the algebraic closure $\overline{\bbF}_p$. To keep the notation simpler, we will confuse these latter algebraic groups with their groups of geometric points 
$\mathbf{\whG}(\overline{\bbF}_p),\mathbf{\whT}(\overline{\bbF}_p)$ etc., the latter being viewed as discrete topological groups. For two topological groups $H_1,H_2$ the notation $\Hom(H_1,H_2)$ denotes the set of ${\it continuous}$ group homomorphisms.

\vskip5pt
For any $w\in W$, we choose a lift $\dot{w}\in\mathbf{\whG}$. To ease notation, we continue to write $w$ for this lift. 
\vskip5pt
Whenever we specialize to the case $\mathbf{\whG}=GL_n$ in the following, we tacitly assume that $\mathbf{\whT}\subset \mathbf{\whB}$ equals the standard torus of diagonal matrices and the Borel subgroup of upper triangular matrices respectively. 
\subsection{Generalities on tame inertial types}

\begin{Def*} A {\it mod $p$ tame inertial type (with values in $\hG$)} is a continuous homomorphism 
$$\tau: I_p\lra \hG$$
that is trivial on $P_p$ and extends to the group $\cG_q$. We denote the set of mod $p$ tame types by $\cT_\hG$. There is an obvious conjugation action of $\hG$ on the set $\cT_\hG$.
\end{Def*}

\begin{Pt*}\label{Pt_can_iso}
There is an isomorphism $$I_p^t\iso \varprojlim_i \bbF^{\times}_{p^{i}},$$ which is canonical by definition of the field $\overline{\bbF}_p$ as residue field of $\overline{\bbQ}_p$, and its finite subfields. We therefore identify the two groups in what follows. In particular, any choice of topological generator $(\zeta_{p^{i}-1})_{i=1}^\infty$ for $\varprojlim_i \bbF^{\times}_{p^{i}}$ identifies a topological generator $M$ for the group $I_p^t$. 
\end{Pt*}
\begin{Pt*}
Let $\tau$ be a mod $p$ tame inertial type. The conjugacy class of $\tau$ is determined by the conjugacy class of the element $\tau(M)$. Since $\tau$ extends to $\cG_q$, this class is stable under the Frobenius $F_q: x\mapsto x^q$. Moreover, any given element $g\in\hG$ has order prime to $p$ if and only if its is semisimple. It follows that 
$\tau\mapsto\tau(M)$ induces a bijection, depending on the choice of $M$, 
\begin{eqnarray*}
\{\textrm{mod $p$ tame inertial types}\}/ \hG & \simeq & 
\{\textrm{$F_q$-stable semisimple conjugacy classes in $\hG$}\}
\end{eqnarray*}
It is well-known that the set on the right hand side is a finite set. 
\end{Pt*}
\begin{Pt*} \label{Pt_types}
Since $F_q$ commutes with $W$, there is a $F_q$-action on the set $\Hom(I_p^t, \hT)/W$.
 The inclusion $\hT\subset\hG$ induces is a canonical map 
$$(\Hom(I_p^t, \hT)/W)^{F_q}\iso \{\textrm{mod $p$ tame inertial types}\}/ \hG. $$
The previous paragraph implies that this is a bijection.
We will identify the two sets from now on via this bijection.
\end{Pt*}
\begin{Pt*}
Let $\tau\in \Hom(I_p, \hT)$ with $\tau^q=w\tau w^{-1}$ for some $w\in W$. Let $\sigma\in W$. Then $$(\sigma \tau\sigma^{-1})^q=\sigma\tau^q\sigma^{-1}= w' (\sigma \tau\sigma^{-1})w'^{-1}$$ where $w'=\sigma w\sigma^{-1}$. Hence if $F_q$ acts on some representative of a given $F_q$-stable class in 
$(\Hom(I_p^t, \hT)/W)^{F_q}$ through a Coxeter element, then $F_q$ acts on any representative of the class through a Coxeter element. 
Denote by 
 $\Hom(I_p^t, \hT)/W)^{F_q,\rm cox}$ the subset of classes on which $F_q$ acts through a Coxeter element of $W$ on some (equivalently, any) representative of the class. In this latter subset, denote by $(\Hom(I_p^t, \hT)/W)^{F_q,\rm reg}$
the subset of regular classes, i.e. which have maximal cardinality equal to $\mid W\mid$. Thus 
$$ (\Hom(I_p^t, \hT)/W)^{F_q,\rm reg}\subset \Hom(I_p^t, \hT)/W)^{F_q,\rm cox}\subset (\Hom^{cts}(I_p^t, \hT)/W)^{F_q}.$$

\end{Pt*}

\subsection{Types with values in Levi subgroups}

Let $\hL\in\widehat{\cL}$ be a standard Levi subgroup of $\hG$.

\begin{Pt*} The discussion of the previous subsection above applies more generally to the reductive group $\hL$, instead of $\hG$. We therefore have the notion of a 
mod $p$ tame inertial type with values in $\hL$
$$\tau: I_p\lra \hL.$$
We denote the set of such types by $\cT_\hL$, with its natural $\hL$-action. 
The quotient $\cT_\hL/\hL$
is in bijection with the set of $F_q$-stable semisimple conjugacy classes in $\hL$, as before. 
\end{Pt*}
\begin{Pt*}
Denote by 
 $\Hom(I_p^t, \hT)/W_{\hL})^{F_q,\rm cox}$ the subset of classes on which $F_q$ acts through a Coxeter element of $W_{\hL}$ on each representative. In this latter subset, denote by $(\Hom(I_p^t, \hT)/W_{\hL})^{F_q,\rm reg}$
the subset of classes which have maximal cardinality $\mid W_{\hL}\mid$. As before,
$$ (\Hom(I_p^t, \hT)/W_{\hL})^{F_q,\rm reg}\subset \Hom(I_p^t, \hT)/W_{\hL})^{F_q,\rm cox}\subset (\Hom^{cts}(I_p^t, \hT)/W_{\hL})^{F_q}.$$

\end{Pt*}

\begin{Pt*}
The finite group $W(\hL):=N_{\hG}(\hL)/\hL$ naturally acts on the set $\cT_{\hL}/\hL.$
The map 
$$ \cT_\hL/\hL \lra \cT_{\hG}/\hG,$$ coming from the inclusion $\hL\subset\hG$, is $W(\hL)$-invariant, i.e. constant on 
$W(\hL)$-orbits.
 \end{Pt*}

\subsection{Tame Galois representations}
\begin{Pt*} Given $g\in\hG$, there is the evaluation homomorphism 
${\rm ev}_{g}\in \Hom(X^*(\hG),\overline{\bbF}_p^\times)$ sending a character 
 $\chi\in X^*(\hG)$ to $\chi(g)$. The map $g\mapsto {\rm ev}_{g}$ is multiplicative. One obtains 
a canonical map 
$$ \Hom(I_p^t, \hT)\lra \Hom (I_p^t,\Hom(X^*(\hG),\overline{\bbF}_p^\times))$$
mapping $\rho$ to $\sigma\mapsto {\rm ev}_{\rho(\sigma)}$. Given $w\in W$, one has 
$${\rm ev}_{w\rho(\sigma)w^{-1}}(\chi)=\chi( w\rho(\sigma)w^{-1}) = \chi(w)\chi( \rho(\sigma) )\chi(w)^{-1}=\chi(\rho(\sigma))={\rm ev}_{\rho(\sigma)}(\chi)$$
for any $\chi\in \Hom(X^*(\hG),\overline{\bbF}_p^\times)$,
since $\overline{\bbF}_p^\times$ is commutative. So the above map is constant on $W$-orbits.  

\vskip5pt 
We view the group $\Hom(X^*(\hG),\overline{\bbF}_p^\times)$ as the group of points of the
torus $\hG^{\rm ab}$, via the canonical isomorphisms
$$\Hom(X^*(\hG),\overline{\bbF}_p^\times)\simeq 
\Hom(X^*(\hG^{\ab}),\overline{\bbF}_p^\times)\simeq \Hom_{\rm alg}(\cO(\hG^{\ab}),\overline{\bbF}_p)=\hG^{\ab}(\overline{\bbF}_p).$$
All in all, there is a canonical map 
$$ \Hom(I_p^t, \hT)\lra \Hom (I_p^t,\hG^{\rm ab})$$
which is constant on $W$-orbits. It therefore gives rise to a map 
$\Hom(I_p^t, \hT)/W\ra  \Hom (I_p^t,\hG^{\rm ab})$ and, by restriction to Frobenius stable classes, to a map 
$$ (\Hom(I_p^t, \hT)/W)^{F_q}\lra  \Hom (I_p^t,\hG^{\rm ab}).$$
Using the inclusion $I_p^t \subset \cG_q^t$, we may form 
the fibre product $$(\Hom(I_p^t, \hT)/W)^{F_q} \; \times_{\Hom (I_p^t,\hG^{\rm ab})} \Hom (\cG_q^t,\hG^{\rm ab}).$$
\end{Pt*}
\begin{Pt*}
According to \ref{Pt_types}, there is a 
canonical map 
$$f_{\hG}: \Hom(\cG_q^t, \hG)/\hG\lra (\Hom(I_p^t, \hT)/W)^{F_q} \; \times_{\Hom (I_p^t,\hG^{\rm ab})} \Hom (\cG_q^t,\hG^{\rm ab}).$$
Note that any pair $(\psi_1,\psi_2)$ in the fibre product on the right-hand side has the property that the image of 
$\psi_1$ in $\Hom (I_p^t,\hG^{\rm ab})$ is $F_q$-invariant, i.e. is a homomorphism of $I_p^t$ into the 
group of $\bbF_q$-rational points $\hG^{\rm ab}(\bbF_q)$. 
\end{Pt*}
\begin{Prop*}\label{prop_Born1} Let $\mathbf{\whG}=GL_n$ and denote by $\Hom(\cG^t_q, \hG)^{\rm irr}$ the set of irreducible $n$-dimensional representations of $\cG_q$.  The above canonical map induces a bijection 
$$\Hom(\cG^t_q, \hG)^{\rm irr}/\hG \iso (\Hom(I_p^t, \hT)/W)^{F_q,\rm reg}  \; \times_{\Hom (I_p^t,\hG^{\rm ab})} \Hom (\cG^t_q,\hG^{\rm ab}).$$
\end{Prop*}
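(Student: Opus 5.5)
The plan is to make everything explicit in matrix terms, using the classical description of irreducible mod $p$ representations of $\cG_q^t$ as inductions of tame characters from the unramified extension of degree $n$. Write $\cG_{q^n}^t\subset \cG_q^t$ for the tame Galois group of $\bbQ_{q^n}$; it also has inertia $I_p^t$. Recall that $\cG_q^t$ is topologically generated by $I_p^t$ and a Frobenius $\varphi$, with relation $\varphi M\varphi^{-1}=M^q$ (or its inverse, depending on conventions). The proof has three steps: the map lands in the regular Coxeter locus, it is injective, and it is surjective.

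\emph{Step 1 (well-definedness).} Let $\rho:\cG_q^t\ra GL_n$ be irreducible. Since $I_p^t$ is abelian of pro-order prime to $p$, $\rho(M)$ is semisimple, so after conjugation $\rho(M)=\diag(t_1,\dots,t_n)\in\hT$. The relation with $\varphi$ shows that $\rho(\varphi)$ permutes the eigenspaces, sending the $t$-eigenspace to the $t^q$-eigenspace (up to convention). We then argue as follows:
\begin{itemize}
\item If the multiset $\{t_i\}$ decomposed into several $F_q$-stable pieces, or if some $t$ had multiplicity $>1$, we would find a proper $\rho(\varphi)$-stable subspace. For multiplicities, take an eigenvector of $\rho(\varphi)^k$ inside an eigenspace, where $k$ is the length of the orbit.
\item This contradicts irreducibility, so the $t_i$ are pairwise distinct and form a single orbit under $t\mapsto t^q$.
\item Hence $\rho(\varphi)\in N(\hT)$ maps to an $n$-cycle, i.e.\ a Coxeter element of $W=S_n$.
\end{itemize}
So the class of $\rho|_{I_p^t}$ lies in $(\Hom(I_p^t,\hT)/W)^{F_q,\rm reg}$, and together with $\det\rho$ we get a point of the fibre product.

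\emph{Step 2 (injectivity).} Fix $\tau=\diag(t_1,\dots,t_n)$ with $t_{i+1}=t_i^q$ (indices mod $n$), pairwise distinct. Any $\rho$ in the fibre has, after conjugation by $\hT$-elements, $\rho(M)=\tau$ and $\rho(\varphi)=c\cdot d$, where $c$ is the fixed permutation matrix of the $n$-cycle and $d=\diag(d_1,\dots,d_n)$. Conjugating by a diagonal matrix $x$ replaces $d$ by $c^{-1}xc\,d\,x^{-1}$, which changes the $d_i$ by coboundaries $x_{i+1}/x_i$ along the cycle. Since the cycle is transitive, this normalizes $d$ to $\diag(1,\dots,1,a)$ with $a=\pm\prod d_i$. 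Now $\det\rho(\varphi)=\pm\prod d_i$ is fixed by the $\Hom(\cG_q^t,\hG^{\ab})$-component, so $a$, and hence $\rho$, is determined up to $\hG$-conjugacy. One also has to check that choosing a different representative of the $W$-orbit of $\tau$ leads to conjugate data; this follows because the centralizer of the regular $\tau$ in $N(\hT)$ is $\hT$, and conjugating by the cycle $c$ itself permutes the representatives.

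\emph{Step 3 (surjectivity).} Given a regular Coxeter class $\tau$ as above and $\delta\in\Hom(\cG_q^t,\bbG_m)$ with $\delta|_{I_p^t}=\det\tau$, define $\rho(M)=\tau$ and $\rho(\varphi)=c\cdot\diag(1,\dots,1,a)$ with $a$ chosen so that $\det\rho(\varphi)=\delta(\varphi)$. We then check:
\begin{itemize}
\item The defining relation of $\cG_q^t$ holds, because conjugation by $c$ realizes $t_i\mapsto t_{i+1}=t_i^q$ and diagonal matrices commute with $\tau$.
\item $\rho$ is continuous, since it has finite image up to the Frobenius direction.
\item $\rho$ is irreducible: any invariant subspace is a sum of $\tau$-eigenlines, because the eigenvalues are distinct, and $c$ acts transitively on these lines.
\end{itemize}
Equivalently, $\rho=\mathrm{Ind}_{\cG_{q^n}^t}^{\cG_q^t}\chi$ for a suitable character $\chi$ extending $t_1$.

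I expect the main obstacle to be bookkeeping rather than conceptual. One point is the convention ($q$ versus $q^{-1}$) in the relation $\varphi M\varphi^{-1}$, and its compatibility with the $F_q$-action on $\Hom(I_p^t,\hT)/W$. The other is showing that the normalization in Step 2 is truly independent of the chosen lifts $\dot w$ and of the representative of the $W$-orbit. I would settle the second point first, by identifying the stabilizer of $(\tau,c)$ under $N(\hT)$-conjugation.
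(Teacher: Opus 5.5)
Your proof is correct. The paper gives no argument for this statement: it simply quotes \cite[Thm.~2.1.7]{B15}, identifying $\hG^{\rm ab}\simeq\bbG_m$ via $\det$. Your route is a self-contained proof instead. The eigenspace argument in Step~1, where an eigenvector of $\rho(\varphi)^k$ inside $V_t$ rules out multiplicities, shows that $\rho|_{I_p^t}$ is regular and Coxeter. Regularity of $\tau(M)$ then forces $\rho(\varphi)\in c\hT$. Finally, the coboundary normalization of $d$ reduces the classification to the single invariant $\prod_i d_i$, which is pinned down by $\det\rho(\varphi)$.

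What this buys is an explicit inverse map. Your normal form $M\mapsto\diag(t,t^q,\dots,t^{q^{n-1}})$, $\varphi\mapsto c\cdot\diag(1,\dots,1,a)$ is, up to $\hT$-conjugation, exactly the pair $(u,s)$ that the paper later extracts from the \emph{proof} in \cite{B15} for Prop.~\ref{prop_Born2}. Your normalization also replaces the appeal to \cite[Prop.~2.1.6(ii)]{B15} made there, and Remark~\ref{Rem_irr} becomes immediate. The paper's citation buys only brevity.

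A few details to tighten:
\begin{enumerate}
\item The invariant is exactly $a=\prod_i d_i$, with no sign; the sign appears only in $\det\rho(\varphi)=(-1)^{n-1}a$.
\item In Step~2, the first conjugation (making $\rho(M)=\tau(M)$) is by $\hG$, not $\hT$. What you actually use is that the centralizer of $\tau(M)$ in all of $\hG$ is $\hT$, not merely in $N(\hT)$.
\item In Step~3, continuity should be argued from the fact that $\cG_q^t\simeq I_p^t\rtimes\hat{\bbZ}$ is profinite and $\hG$ is discrete. So you need $\rho(\varphi)$ to have finite order. This holds because $\rho(\varphi)^n=a\cdot 1$ with $a$ a root of unity in $\overline{\bbF}_p^\times$.
\end{enumerate}
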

\begin{proof} This is \cite[Thm. 2.1.7]{B15}. Note that loc. cit. uses $\hG^{\rm ab}\simeq \bbG_m$ via the determinant. 
\end{proof}

\begin{Prop*}\label{prop_Born2}  Let $\mathbf{\whG}=GL_n$. Choose a topological generator $M\in I_p^t$ and an arithmetic Frobenius $\varphi\in \cG_q$. There is a map $h_{\hG}^{M,\varphi}$ depending on the pair $(M,\varphi)$
$$\Hom(\cG_q^t, \hG)/\hG \longleftarrow (\Hom(I_p^t, \hT)/W)^{F_q,\rm cox}  \; \times_{\Hom (I_p^t,\hG^{\rm ab})} \Hom (\cG_q^t,\hG^{\rm ab}),$$
which is a section to the canonical map $f_{\hG}$ in the sense that $f_{\hG}\circ h_{\hG}^{M,\varphi}={\rm id}$.
\end{Prop*}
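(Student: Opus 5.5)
We construct $h_{\hG}^{M,\varphi}$ explicitly by induction from an unramified extension, in the spirit of \cite{B15}. Let $(\psi_1,\psi_2)$ lie in the fibre product. Choose a representative $\tau\in\Hom(I_p^t,\hT)$ of $\psi_1$ with $\tau^q=c\,\tau\,c^{-1}$ for a Coxeter element $c\in W$. Using the remark preceding \ref{prop_Born1}, we may, after $W$-conjugation, take $c$ to be the standard $n$-cycle $(1\,2\,\cdots\,n)$. Write $t:=\tau(M)=\diag(t_1,\dots,t_n)$. The Coxeter condition says that $F_q$ permutes the $t_i$ cyclically up to the $n$-cycle, so $t_{i+1}=t_i^{q}$ up to the fixed indexing and $t_1^{q^n}=t_1$.

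The homomorphism $\tau$ restricted to the first coordinate is a character $\theta: I_p^t\to\overline{\bbF}_p^\times$ with $\theta^{q^n}=\theta$. Hence $\theta$ factors through $\bbF_{q^n}^\times$ via \ref{Pt_can_iso}, and so it extends to a character of $\cG^t_{q^n}={\rm Gal}(\overline{\bbQ}_p/\bbQ_{q^n})/P_p$. The extension is normalized using $\varphi$: set $\tilde\theta(\varphi^n):=a$ for a scalar $a$ still to be fixed. Define
$$h(\psi_1,\psi_2):=\ind_{\cG_{q^n}}^{\cG_q}\tilde\theta,$$
written concretely in the basis $\varphi^{i}\otimes v$, $i=0,\dots,n-1$. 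In this basis $M$ acts diagonally by $\diag(\theta,\theta^q,\dots,\theta^{q^{n-1}})(M)$, which reproduces $t$. Frobenius $\varphi$ acts by $c$ times $\diag(1,\dots,1,a)$. The determinant of $\varphi$ is therefore $\pm a$, with sign $\mathrm{sgn}(c)=(-1)^{n-1}$. We choose $a:=(-1)^{n-1}\psi_2(\varphi)$, so that $\det\circ h(\psi_1,\psi_2)$ agrees with $\psi_2$ on $\varphi$. On $I_p^t$ the determinant is $\prod_i\theta^{q^{i-1}}$, which equals $\psi_2|_{I_p^t}$ by the fibre product condition. Since $\cG_q^t$ is topologically generated by $M$ and $\varphi$, this gives $\det\circ h=\psi_2$.

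The key steps, in order, are as follows.

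First, well-definedness. A different representative of the class of $\psi_1$ with the same Coxeter normalization differs by an element of the centralizer of $c$ in $W$, namely $\langle c\rangle$. Conjugating by $c$ replaces $\theta$ by $\theta^q$, and $\ind\tilde\theta\simeq\ind\tilde\theta^{\varphi}$. We must check that the choice of $a$ is compatible with this: it is, because $\det$ is conjugation invariant. So the $\hG$-conjugacy class of the result is independent of the choices. Different $n$-cycles are conjugate in $W=S_n$, so reducing to the standard one is harmless.

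Second, compatibility with the relation $\tau^q=c\tau c^{-1}$. This amounts to the identity $\varphi M\varphi^{-1}=M^q$ in $\cG_q^t$, which has to be checked against the chosen identification \ref{Pt_can_iso}.

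Third, the verification that $f_{\hG}\circ h=\id$. The restriction of $h(\psi_1,\psi_2)$ to $I_p^t$ is diagonal with entries $\tau$, hence lies in the class $\psi_1$ by \ref{Pt_types}. The determinant is $\psi_2$ by the normalization of $a$ above.

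The main obstacle I expect is the second step. One must make the signs and conventions consistent: the direction of the Frobenius ($q$ versus $q^{-1}$) and the sign $(-1)^{n-1}$ in the determinant of the permutation matrix. This sign is exactly why the section depends on $\varphi$, and on $M$ through the identification of $\psi_1$ with $\tau(M)$.

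No irreducibility is needed for this construction: when $\theta$ has orbit smaller than $n$, the induced representation is reducible, but the construction still goes through. On the regular locus it recovers the inverse of the bijection \ref{prop_Born1}.
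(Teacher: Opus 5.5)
Your proposal is correct and is essentially the paper's proof. The paper sends $M\mapsto u=\diag(y,y^q,\dots,y^{q^{n-1}})$ and $\varphi\mapsto s$ (ones on the superdiagonal, a scalar in the lower left corner), which is just the matrix form of your $\ind\tilde\theta$ in the basis $\varphi^i\otimes v$; for independence of the representative it cites \cite[Prop. 2.1.6 (ii)]{B15}, where you use $\ind\tilde\theta\simeq\ind\tilde\theta^{\varphi}$.

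Two small remarks. Your normalization $a=(-1)^{n-1}\psi_2(\varphi)$ is the right one: the corner entry $(-1)^n z$ in the paper's proof gives determinant $-z$, and the correct sign $(-1)^{n+1}$ is the one used in \ref{Pt_explicit}. In your well-definedness step, for non-regular $\tau$ the conjugating Weyl element need not centralize $c$; two $c$-normalized representatives are nevertheless related by a power of $c$, because each is determined by its first entry, and that entry lies in the $q$-power orbit of $\theta(M)$.
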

\begin{proof} This is a corollary of the proof of \cite[Thm. 2.1.7]{B15}.
We identify $\hG^{\rm ab}\simeq \bbG_m$ via the determinant. 
Suppose that $(W\psi_1,\psi_2)$ is a point in the fibre product, with a chosen representative $\psi_1$. So $\psi_1(M)=\diag (y_1,...,y_n)\in\hT$. After renumbering the $y_i$, i.e. choosing a different representative of the orbit, we have $y_{i+1}=y_i^q$ for $1\leq i\leq n$ where $y_{n+1}:=y_1$.
Hence, one obtains an element of the form $u=\diag(y,y^q,...,y^{q^n})$, where $y:=y_1\in\bbF_{q^n}$.
If $z:=\det\psi_2(\varphi)$, one defines an element $s\in\hG$ as 
$$s:=\left( \begin{array}{rrrrr}
0 & 1 &  & &\\
 & 0 & 1 & &\\
 &  & \ddots & \ddots & \\
 &  &  &0 &1\\
(-1)^nz&  &  & &0\\
\end{array}\right)$$
Then $sus^{-1}=u^q$. Sending 
$M\mapsto u, \varphi\mapsto s$ defines a tame representation $\rho$ of $\cG_q$. 
We define $h_{\hG}^{M,\varphi}(W\psi_1,\psi_2)$ to be the $\hG$-conjugacy class of $\rho$. 
Choosing a different representative $\psi_1$ transforms $y$ into a $q$-power $y'=y^{q^{i}}$. Then $y$ and $y'$ are 
${\rm Gal}(\bbF_{q^n}/\bbF_q)$-conjugate. According to \cite[Prop. 2.1.6 (ii)]{B15}, the pairs
$(u,s)$ and $(u',s)$ are $\hG$-conjugate. Hence, the map $h_{\hG}^{M,\varphi}$ is well-defined. 
It is a section of $f_{\hG}$ by construction. 
\end{proof} 
\begin{Rem*}\label{Rem_irr} In the notation of the preceding proof: The orbit $W\psi_1$ lies in the smaller set 
 $(\Hom(I_p^t, \hT)/W)^{F_q,\rm reg}$ if and only if all $y_i$ are pairwise different, i.e. if and only if $y=y_1$ does not lie in any proper subfield of $\bbF_{q^n}$.
\end{Rem*}
\subsection{Representations with values in Levi subgroups}
Let $\hL\in\hat{\cL}$ be a standard Levi subgroup of $\hG$.
\begin{Pt*} The discussion of the preceding subsection applies more generally to $\hL$ instead of $\hG$. 
In particular, there is a canonical map 
$$f_{\hL}: \Hom(\cG_q^t, \hL)/\hG\lra (\Hom(I_p^t, \hT)/W_{\hL})^{F_q} \; \times_{\Hom (I_p^t,\hL^{\rm ab})} \Hom (\cG_q^t,\hL^{\rm ab}).$$

Fix a topological generator $M$ of $I_p^t$ and a Frobenius $\varphi$ in $\cG_q$. This defines a splitting 
$\cG_q^t\simeq I_p^t \rtimes \bbZ.$ Since $\hL^{\rm ab}$ is abelian, the splitting defines a bijection 
$$\Hom (\cG_q^t,\hL^{\rm ab})\simeq \{ (u,s)\in \hL^{\rm ab}: u=u^q\}=\hL^{\rm ab}(\bbF_q)\times\hL^{\rm ab}.$$
This means that the fibre product appearing as the target of $f_{\hL}$ can be written as 
$$(\Hom(I_p^t, \hT)/W_{\hL})^{F_q} \; \times_{\Hom (I_p^t,\hL^{\rm ab})} \Hom (\cG_q^t,\hL^{\rm ab})= \Hom(I_p^t, \hT)/W_{\hL})^{F_q}\times \hL^{\rm ab},$$
i.e. has the structure of a finite disjoint union of algebraic tori defined over $\bbF_q$ (namely finitely many copies of  $\hL^{\rm ab}$).
In particular, the map $f_\hL$ can be written as 
$$f_\hL:  \Hom(\cG_q^t, \hL)/\hL\lra (\Hom(I_p^t, \hT)/W_{\hL})^{F_q} \; \times \hL^{\rm ab}.$$
\end{Pt*}
\begin{Def*} \label{Def_YL} Let 
$$ \;\;X_\hL:=(\Hom(I_p^t, \hT)/W_{\hL})^{F_q,\rm cox} \; \times_{\Hom (I_p^t,\hL^{\rm ab})} \Hom (\cG_q^t,\hL^{\rm ab})$$

and 
 $$ X^{\rm reg}_\hL:= (\Hom(I_p^t, \hT)/W_{\hL})^{F_q,\rm reg} \; \times_{\Hom (I_p^t,\hL^{\rm ab})} \Hom (\cG_q^t,\hL^{\rm ab}).$$ 
 \end{Def*}
 As we have seen above, these sets have the structure of a finite union of algebraic tori defined over $\bbF_q$. In particular, they are affine schemes defined over $\bbF_q$. 
\vskip5pt 
As we will recall in \ref{Pt_rappel} below, the finite group
$W(\hL):=N_{\hG}(\hL)/\hL$ identifies with a subgroup of $W$ which commutes with the action of $W_{\hL}$ and of $F_q$.
It therefore acts on $\Hom(I_p^t, \hT)/W_{\hL})^{F_q}$ and its subsets corresponding to Coxeter classes and regular classes. Moreover, the group $W(\hL)$ acts naturally on $\hL^{\rm ab}$ and 
$\Hom(\cG_q^t, \hL)/\hL$. In particular, it acts on the affine schemes $Y_\hL$ and $Y^{\rm reg}_\hL.$

\begin{Def*}\label{Def_X[L]}  \label{Pt_equiv} Put $ X_{[\hL]}:= X_\hL / W(\hL)$ and $X^{\rm reg}_{[\hL]}:= X^{\rm reg}_\hL / W(\hL)$.  

\end{Def*} 
Since $W(\hL)$ is finite, these are naturally affine schemes defined over $\bbF_q$.

\begin{Prop*} \label{Prop_irr} Let $\mathbf{\whG}=GL_n$ and denote by $\Hom(\cG^t_q, \hL)^{\rm irr}$ the set of irreducible representations of $\cG_q$ with values in $\hL$. 
The canonical map $f_\hL$ induces a bijection 
$$\Hom(\cG_q^t, \hL)^{\rm irr}/\hL \iso  (\Hom(I_p^t, \hT)/W_{\hL})^{F_q,\rm reg} \; \times_{\Hom (I_p^t,\hL^{\rm ab})} \Hom (\cG_q^t,\hL^{\rm ab}).$$
\end{Prop*}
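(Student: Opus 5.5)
We reduce to Proposition \ref{prop_Born1} applied blockwise. For $\hG=GL_n$, a standard Levi is $\hL=GL_{n_1}\times\cdots\times GL_{n_k}$ (block diagonal), with $\hT$ the product of the diagonal tori $\hT_j$ of the blocks. We also have $W_{\hL}=\prod_j S_{n_j}$ and $\hL^{\ab}=\prod_j\bbG_m$ via the block determinants. Here an "irreducible representation with values in $\hL$" is read as a homomorphism $\rho=(\rho_1,\dots,\rho_k)$ with each $\rho_j:\cG_q^t\ra GL_{n_j}$ irreducible. Since every irreducible mod $p$ representation of $\cG_q$ is tame (the pro-$p$ group $P_p$ has a nonzero fixed vector, which is a $\cG_q$-stable subspace as $P_p$ is normal), this is the same as $\rho\in\Hom(\cG_q^t,\hL)$ with irreducible components. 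Hence
$$\Hom(\cG_q^t,\hL)^{\rm irr}/\hL=\prod_{j=1}^k \Hom(\cG_q^t,GL_{n_j})^{\rm irr}/GL_{n_j}.$$

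Next, decompose the right-hand side as a product. Since $W_{\hL}$ and the Frobenius $F_q$ both act componentwise on $\Hom(I_p^t,\hT)=\prod_j\Hom(I_p^t,\hT_j)$, we get $(\Hom(I_p^t,\hT)/W_{\hL})^{F_q}=\prod_j(\Hom(I_p^t,\hT_j)/S_{n_j})^{F_q}$. A Coxeter element of $W_{\hL}=\prod_j S_{n_j}$ is exactly a tuple of Coxeter elements ($n_j$-cycles) of the factors. A class has maximal cardinality $|W_{\hL}|$ if and only if each component class has cardinality $|S_{n_j}|$. Therefore the Coxeter and regular subsets are the products of the corresponding subsets for each $GL_{n_j}$.

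The fibre product over $\Hom(I_p^t,\hL^{\ab})=\prod_j\Hom(I_p^t,\bbG_m)$ also splits as a product, because the map to $\hL^{\ab}$ is the tuple of block determinants. So the right-hand side is $\prod_j$ of the right-hand side of Proposition \ref{prop_Born1} for $GL_{n_j}$. The map $f_\hL$ is compatible with these decompositions: restricting $\rho$ to $I_p^t$ and conjugating into $\hT$ can be done inside $\hL$, block by block, and the abelianization is taken componentwise. Hence $f_\hL$ is the product of the maps $f_{GL_{n_j}}$, and the bijection follows from Proposition \ref{prop_Born1} applied to each $n_j$.

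The main point to check is the semisimplicity step inside $f_\hL$. We need $\rho(M)$ to be semisimple and $\hL$-conjugate into $\hT$, so that its $W_{\hL}$-class is well defined. This holds because $\rho(M)$ has order prime to $p$, and a semisimple element of the connected reductive group $\hL$ lies in a maximal torus, all of which are $\hL$-conjugate to $\hT$. We must also check that $\hL$-conjugacy of such elements in $\hT$ is the same as $W_{\hL}$-conjugacy, which is the standard fact for connected reductive groups.
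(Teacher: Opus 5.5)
Your proposal is correct and follows the paper's own proof, which consists exactly of writing $\hL=\prod_i GL_{n_i}$ and applying Proposition \ref{prop_Born1} to each block. Your checks that the Coxeter and regular loci, the fibre product over $\Hom(I_p^t,\hL^{\rm ab})$, and the map $f_\hL$ all split blockwise make explicit what the paper leaves implicit; for the Coxeter locus, ``Coxeter element'' has to be read up to conjugacy, i.e.\ as tuples of $n_j$-cycles, as the paper implicitly does.
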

\begin{proof} This follows from writing $\hL=\prod_{i} GL_{n_i}$ as a product of $GL_n$'s and applying \ref{prop_Born1} to each factor.
\end{proof}
\begin{Prop*} Let $\mathbf{\whG}=GL_n$. Fix a topological generator $M$ of $I_p^t$ and 
a Frobenius $\varphi$ in $\cG_q$. There is a map $h^{M,\varphi}_{\hL}$ depending on the pair $(M,\varphi)$
$$\Hom(\cG_q^t, \hL)/\hL \longleftarrow (\Hom(I_p^t, \hT)/W_{\hL})^{F_q,\rm cox}  \; \times_{\Hom (I_p^t,\hL^{\rm ab})} \Hom (\cG_q^t,\hL^{\rm ab}),$$
which is a section to $f_{\hL}$ in the sense that $f_{\hL}\circ h_{\hL}^{M,\varphi}={\rm id}$.
\end{Prop*}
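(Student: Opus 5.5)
The plan is to reduce to the case $\hL=\hG$, i.e. to Proposition \ref{prop_Born2}, by exploiting the product decomposition of the standard Levi. Write $\hL=\prod_{i=1}^k GL_{n_i}$, embedded block-diagonally in $GL_n$ with $\sum_i n_i=n$.

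Under this decomposition the relevant objects split factorwise:
\begin{itemize}
\item $\hT=\prod_i \hT_i$ with $\hT_i$ the diagonal torus of $GL_{n_i}$, and $W_\hL=\prod_i W_i$ with $W_i=S_{n_i}$;
\item $\hL^{\rm ab}=\prod_i GL_{n_i}^{\rm ab}$, identified with $\prod_i\bbG_m$ via the block determinants.
\end{itemize}

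The first step is to check that the source of the desired map decomposes as a product,
$$(\Hom(I_p^t, \hT)/W_{\hL})^{F_q,\rm cox}=\prod_i (\Hom(I_p^t, \hT_i)/W_i)^{F_q,\rm cox}.$$
The $W_\hL$-orbits are products of $W_i$-orbits, and $F_q$-stability can be tested factorwise because $F_q$ preserves each factor $\hT_i$. For the Coxeter condition, an element $w=(w_i)\in W_\hL$ is a Coxeter element of $W_\hL$ if and only if each $w_i$ is a Coxeter element of $W_i$. The reason is that the simple reflections of $W_\hL$ are the disjoint union of those of the $W_i$, and these mutually commute. Moreover, if $\tau^q=w\tau w^{-1}$ with $w=(w_i)$, then $\tau_i^q=w_i\tau_iw_i^{-1}$ for each $i$.

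The fibre product over $\Hom(I_p^t,\hL^{\rm ab})$ then also splits, since both the map to $\Hom(I_p^t,\hL^{\rm ab})$ and the target are products. Similarly,
$$\Hom(\cG_q^t,\hL)/\hL=\prod_i \Hom(\cG_q^t,GL_{n_i})/GL_{n_i},$$
and $f_\hL$ is the product of the maps $f_{GL_{n_i}}$. The latter holds because $f_\hL$ is built from restriction to $I_p^t$, the identification of \ref{Pt_types} applied to $\hL$, and composition with $\hL\to\hL^{\rm ab}$, all of which are compatible with products.

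Given this, I define
$$h^{M,\varphi}_\hL:=\prod_i h^{M,\varphi}_{GL_{n_i}},$$
using the sections of Proposition \ref{prop_Born2} with the same fixed pair $(M,\varphi)$ in each factor. Concretely, the image of a point is represented by the block-diagonal representation $M\mapsto\diag(u_1,\dots,u_k)$, $\varphi\mapsto\diag(s_1,\dots,s_k)$, where $(u_i,s_i)$ are the matrices constructed in the proof of \ref{prop_Born2}. The identity $f_\hL\circ h_\hL^{M,\varphi}=\id$ then follows factorwise from $f_{GL_{n_i}}\circ h^{M,\varphi}_{GL_{n_i}}=\id$. Well-definedness, i.e. independence of the chosen representative of the $W_\hL$-orbit, likewise holds factorwise: it reduces to the argument via \cite[Prop. 2.1.6 (ii)]{B15} used in the proof of \ref{prop_Born2}, since $\hL$-conjugacy of tuples is exactly factorwise $GL_{n_i}$-conjugacy.

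The main point requiring care is the product decomposition of the Coxeter subset together with the compatibility of $f_\hL$ with products. The remaining steps are formal consequences of Proposition \ref{prop_Born2}.
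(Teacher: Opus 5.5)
Your proposal is correct and takes the same route as the paper: write $\hL=\prod_i GL_{n_i}$ and apply Proposition \ref{prop_Born2} to each factor. The paper states only that one-line reduction. Your checks that the Coxeter subset, the fibre product over $\Hom(I_p^t,\hL^{\rm ab})$, and $f_{\hL}$ all split as products are exactly the details it leaves implicit.
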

\begin{proof} This follows from writing $\hL=\prod_{i} GL_{n_i}$ as a product of $GL_n$'s and applying \ref{prop_Born2} to each factor.
\end{proof}

\begin{Pt*}\label{Pt_Galois_par} Let $\hG=GL_n$. The injective map 
$h_{\hL}^{M,\varphi}$ of the last proposition may be written as 
$$  h_{\hL}^{M,\varphi}: X_\hL \lra \Hom(\cG_q^t, \hL)/\hL.$$
As will be recalled in \ref{Pt_rappel} below, the group $W(\hL)$ is a subgroup of the full permutation group on the set of blocks of the Levi subgroup $\hL$. Hence the map 
$h_{\hL}^{M,\varphi}$ is $W(\hL)$-equivariant, by its block wise definition. 
On the other hand, the map 
$$\Hom(\cG_q^t, \hL)/\hL \rightarrow \Hom(\cG_q^t, \hG)/\hG,$$ coming from $\hL\subset \hG$, is constant on $W(\hL)$-orbits and becomes injective on the quotient set. 
Composing $h_{\hL}^{M,\varphi}$ with the latter map gives an injective map 
$$ h_{[\hL]}^{M,\varphi}: X_{[\hL]}\longrightarrow  \Hom(\cG_q^t, \hG)/\hG.$$ 
\end{Pt*}

\section{Galois parametrization of Vinberg strata}
We keep all the notations and conventions introduced in the preceding section. 
\subsection{Dual Deligne-Lusztig pairs}
We recall some basic concepts from the representation theory of finite groups of Lie type, cf. \cite{C85} and \cite{DM91}.

\begin{Pt*}
To any $w\in W$, let $\hat{g}_w\in \hG$ such that $\hat{g}_w^{-1}F_q(\hat{g}_w)$ is a lift in $\hG$ of $w$ (Lang's theorem). Then put $\hT_w:=\hat{g}_w \hT \hat{g}_w^{-1}$.
Then $\hT_w$ is an $F_q$-stable maximal torus in $\hG$.
In this way, the conjugacy classes of $W$ are in bijection with the $\hG(\bbF_q)$-conjugacy classes of $F_q$-stable maximal tori in $\hG$. 
\vskip5pt
A {\it dual (Deligne-Lusztig) pair}  $(\hT',s)$ consists of 
an $F_q$-stable maximal torus $\hT'$ in $\hG$ and an element $s\in \hT'(\bbF_q)$.
Since $s$ is contained in a torus, it is a semisimple element.
The set of dual pairs admits a natural action of the finite group $\hG(\bbF_q)$.
There is a well-defined surjective map 

 \begin{eqnarray*}
\{\textrm{dual pairs $(\hT',s)$}\} / \hG(\bbF_q)& \lra & \{\textrm{semisimple elements in $\hG(\bbF_q)$}\}/ \hG(\bbF_q) \\
\textrm{$(\hT',s)$}  & \lmapsto & \textrm{$s$}.
\end{eqnarray*}
\end{Pt*}
Remark: Duality of algebraic and finite tori relates dual pairs to classical Deligne-Lusztig pairs, 
e.g. \cite[Prop. 13.13]{DM91}. We do not need this connection, so we will not make it precise here. 

\begin{Pt*}
Suppose that $(\hT',s)$ is a dual pair. Since $s\in \hG$ is a semisimple element, its centralizer $Z_{\hG}(s)$ is a reductive group. If the derived group $\hG^{\der}$ is simply connected, then $Z_{\hG}(s)$ is connected. We say that $\hT'$ (or the dual pair $(\hT',s)$) is {\it maximally split}, if there is an $F_q$-stable Borel subgroup of $Z_{\hG}(s)$ containing the maximal torus $\hT'$. 
All maximal split tori are $\hG(\bbF_q)$-conjugate in $\hG$ and, up to 
$\hG(\bbF_q)$-conjugacy, there is a unique maximally split torus in the connected centralizer $Z_{\hG}(s)^\circ$. Hence the natural map $(\hT',s) \mapsto s$ becomes injective upon restriction to maximally split pairs. One thus gets a bijection 
  \begin{eqnarray*}
\{\textrm{maximally split dual pairs $(\hT',s)$}\} / \hG(\bbF_q)& \simeq & \{\textrm{semisimple elements in $\hG(\bbF_q)$}\}/ \hG(\bbF_q) 
\end{eqnarray*}
\end{Pt*}
\begin{Pt*}
If the derived group $\hG^{\der}$ is simply connected, it is well-known that the inclusion map 
$\hG(\bbF_q)\ra \hG$ induces a bijection 
 \begin{eqnarray*}
\{\textrm{semisimple elements in $\hG(\bbF_q)$}\}/ \hG(\bbF_q) 
 & \simeq & \{\textrm{$F_q$-stable semisimple conjugacy classes in $\hG$}\}.
 \end{eqnarray*}
The inverse map is given by $c\mapsto c \cap \hG(\bbF_q)$, e.g. (proof of) \cite[Prop. 3.7.3]{C85}.
\end{Pt*}
\subsection{Jantzen parametrization}
Fix once for all a topological generator $(\zeta_{p^{i}-1})_{i=1}^\infty$ for the group $\varprojlim_i \bbF^{\times}_{p^i}$. We briefly recall Jantzen's parametrization of Deligne-Lusztig pairs from \cite[3.1]{J81}, in the language of dual pairs. 

\begin{Pt*}\label{Pt_Jantzen}
The group $W$ acts on $X_*(\hT)$ in the usual way and we may form the
semi-direct product 
$X_*(\hT)\rtimes W$. The group $X_*(\hT)\rtimes W$ acts on the set 
$W\times X_*(\hT)$ as follows:
$$ ^{(\nu,\sigma)}(w,\mu):=(\sigma w \sigma^{-1}, \sigma\mu+(q-\sigma w\sigma^{-1})\nu).$$

There is a well-defined map (depending on the choice of $(\zeta_{p^{i}-1})_{i=1}^\infty$) 
\begin{eqnarray*}
\textrm{$W\times X_*(\hT)$} & \longrightarrow & \{\textrm{dual pairs $(\hT',s)$}\} 
\end{eqnarray*}
given as follows.
Given $(w,\mu)\in W\times X_*(\hT)$,
we let $\hT':=\hT_{w^{-1}}$\footnote{The passage to $w^{-1}$ is with an eye towards Lem. \ref{Lem_cox} below.}. Let $t$ be such that $w^t=1$. Then the torus $\hT'$ splits over $\bbF_{q^t}$. The image of $(w,\mu)$ is then given by the dual pair  

$$(\hT_{w^{-1}} , \hat{g}_{w^{-1}} N_{(F_qw^{-1})^t / F_qw^{-1}}( \mu ) (\zeta_{q^{t}-1}) (\hat{g}_{w^{-1}})^{-1}) .$$

Here, $F_qw^{-1}$ is the endomorphism of $X_*(\hT)$ given by $F_qw^{-1}(\eta)=F_q\circ w^{-1}\circ \eta$ and for an arbitrary endomorphism 
$A$ of $X_*(\hT)$, we write $N_{A^t /A }:=\prod_{i=0}^{t-1} A^{i}$. In particular, given $\mu\in X_*(\hT)$, the cocharacter $N_{A^t /A }(\mu)$ sends an element $x\in\bbG_m$ to the element 
$\prod_{i=0}^{t-1} (A^{i}(\mu)(x)) $, the product being taken in $\hT$. 
The above map induces a well-defined bijection 
\begin{eqnarray*}
\{\textrm{$X_*(\hT)\rtimes W$-orbits in  $W\times X_*(\hT)$}\} & \iso & \{\textrm{dual pairs $(\hT',s)$}\}/ \hG(\bbF_q). 
\end{eqnarray*}
\end{Pt*}
\begin{Lem*}\label{Lem_cox}
In the notation of the preceding discussion we have for the element 

$$ s_{w,\mu}:= N_{(F_qw^{-1})^t / F_qw^{-1}}( \mu ) (\zeta_{q^{t}-1})\in \hT$$
that $$F_q.s_{w,\mu}= w.s_{w,\mu}.$$
\end{Lem*}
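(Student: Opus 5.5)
The plan is to compute directly on cocharacters, turning the statement about $s_{w,\mu}\in\hT$ into an identity of cocharacters evaluated at the fixed root of unity $\zeta:=\zeta_{q^t-1}$. Write $A:=F_qw^{-1}$ as an endomorphism of $X_*(\hT)$; concretely $A(\eta)=q\,w^{-1}(\eta)$, since $F_q$ acts on $X_*(\hT)$ as multiplication by $q$ for the split torus $\hT$. The actions of $W$ and $F_q$ on $\hT$ are compatible with this: for $\eta\in X_*(\hT)$ and $x\in\bbG_m$ one has $w.(\eta(x))=(w\eta)(x)$ and $F_q(\eta(x))=\eta(x)^q=(q\eta)(x)$. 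Hence $F_q.s_{w,\mu}=\bigl(q\,N(\mu)\bigr)(\zeta)$ and $w.s_{w,\mu}=\bigl(w\,N(\mu)\bigr)(\zeta)$, where $N:=N_{A^t/A}$. The additive notation is the natural one here: the product $\prod_i A^i(\mu)(x)$ is the cocharacter $\sum_i A^i(\mu)$ evaluated at $x$.

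It therefore suffices to show that the cocharacter $qN(\mu)-wN(\mu)$ kills $\zeta$. Compute
$$q N(\mu)=w\,A\bigl(N(\mu)\bigr)=w\sum_{i=0}^{t-1}A^{i+1}(\mu)=w\Bigl(N(\mu)+A^t(\mu)-\mu\Bigr),$$
using $q\eta=w(q w^{-1}\eta)=wA(\eta)$, which holds because $w$ commutes with multiplication by $q$. Since $w^t=1$ and the factors $q$ and $w^{-1}$ commute, we get $A^t=q^t w^{-t}=q^t$. Thus
$$qN(\mu)-wN(\mu)=w\bigl((q^t-1)\mu\bigr)=(q^t-1)\,w\mu,$$
and evaluating at $\zeta$ gives $(w\mu)(\zeta^{q^t-1})=(w\mu)(1)=1$, because $\zeta=\zeta_{q^t-1}$ has order dividing $q^t-1$. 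This yields $F_q.s_{w,\mu}=w.s_{w,\mu}$.

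The only delicate point is conventions. First, the notation $\zeta_{q^t-1}$ should be read inside the compatible system fixed earlier (so $q^t=p^{i}$ for some $i$), and it lies in $\bbF_{q^t}^\times$, so its $(q^t-1)$-th power is $1$. Second, one must check that the $W$-action on $\hT$ through the chosen lift $\dot w$, namely conjugation, corresponds to the linear action of $w$ on $X_*(\hT)$ used in the definition of $A$. I would fix this first and then make sure the $w^{-1}$ inside $A$ produces $w$, not $w^{-1}$, on the right-hand side. This is where the passage to $\hT_{w^{-1}}$ in the footnote matters, and it is the step I expect to need care. Everything else is the telescoping identity above.
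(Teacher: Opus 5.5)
Your argument is correct and is essentially the paper's proof. Both rest on the telescoping identity $A\,N_{A^t/A}(\mu)=N_{A^t/A}(\mu)+A^t(\mu)-\mu$ for $A=F_qw^{-1}$, together with $A^t=F_q^t$ (since $w^t=1$) and $\mu(\zeta_{q^t-1}^{q^t})=\mu(\zeta_{q^t-1})$: the paper phrases this multiplicatively as $F_qw^{-1}$-invariance of $s_{w,\mu}$, while you phrase it additively as $qN(\mu)-wN(\mu)=(q^t-1)w\mu$.

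The convention worry you flag is harmless. The paper defines the action on cocharacters by composition, $F_qw^{-1}(\eta)=F_q\circ w^{-1}\circ\eta$, so your identification $w.(\eta(x))=(w\eta)(x)$ holds by definition, and the footnote on $\hT_{w^{-1}}$ plays no role in this lemma.
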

\begin{proof}
By construction $s_{w,\mu}$ is invariant under $F_qw^{-1}$. Indeed, it suffices to see that 
$(F_qw^{-1})^t$ fixes $\mu (\zeta_{q^{t}-1})$. But 
$$F^t_q(  \mu  (\zeta_{q^{t}-1}))=
\mu( (\zeta_{q^{t}-1})^{q^t})= \mu( \zeta_{q^{t}-1}),$$
and $F_q$ and $w^{-1}$ commute. So the claim follows from the fact that $w^{-t}=1$.
\end{proof}
\begin{Lem*} \label{lem_divisor}
Let $k\geq 1$. Then 
$$ N_{(F_qw^{-1})^{kt} / F_qw^{-1}}( \mu ) (\zeta_{q^{kt}-1})=
N_{(F_qw^{-1})^{t} / F_qw^{-1}}( \mu ) (\zeta_{q^{t}-1}).$$
\end{Lem*}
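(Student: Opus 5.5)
Set $A:=F_qw^{-1}$, an endomorphism of $X_*(\hT)$ with $A^t=q^t\cdot w^{-t}=q^t$, since $F_q$ acts on cocharacters as multiplication by $q$ and commutes with $W$. We compute both sides explicitly and compare.

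\emph{Step 1: rewrite the norm on the left.} Split the range $0\le i\le kt-1$ of the product into blocks $i=jt+r$ with $0\le j\le k-1$ and $0\le r\le t-1$. Since $A^{jt+r}=q^{jt}A^r$ on $X_*(\hT)$, we get
$$N_{A^{kt}/A}(\mu)=\sum_{j=0}^{k-1}q^{jt}\,N_{A^t/A}(\mu)=\frac{q^{kt}-1}{q^t-1}\,N_{A^t/A}(\mu),$$
written additively in $X_*(\hT)$. Evaluating at a point $x$ gives
$$N_{A^{kt}/A}(\mu)(x)=N_{A^t/A}(\mu)\bigl(x^{(q^{kt}-1)/(q^t-1)}\bigr).$$

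\emph{Step 2: compatibility of the chosen roots of unity.} The family $(\zeta_{p^i-1})_i$ is a topological generator of $\varprojlim_i\bbF^\times_{p^i}$. The transition maps of this system are the norms $\bbF^\times_{p^{m}}\to\bbF^\times_{p^{i}}$ for $i\mid m$, i.e. $x\mapsto x^{(p^m-1)/(p^i-1)}$. Writing $q=p^f$ and taking $m=fkt$, $i=ft$, this gives
$$\zeta_{q^{kt}-1}^{(q^{kt}-1)/(q^t-1)}=\zeta_{q^t-1}.$$

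Substituting this with $x=\zeta_{q^{kt}-1}$ into the formula from Step 1 yields the claim.

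The only point needing care is the convention that the system is indexed by divisibility with norm transition maps, which is the standard meaning of \ref{Pt_can_iso}. I will state this convention explicitly. I will also record that $A^t=q^t$ as an endomorphism, using $w^{t}=1$ and that $F_q$ commutes with $w$, exactly as in the proof of Lemma \ref{Lem_cox}.
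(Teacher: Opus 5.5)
Your proof is correct and follows the paper's own argument. You split the index range into blocks of length $t$ and use $A^t=q^t$ (from $w^t=1$) to reduce the inner factor to multiplication by $1+q^t+\dots+q^{(k-1)t}$, i.e.\ the norm $\bbF_{q^{kt}}^\times\to\bbF_{q^t}^\times$; you then apply the norm-compatibility of the chosen system $(\zeta_{p^i-1})_i$. Your additive formulation simply makes explicit what the paper writes multiplicatively as $\prod_i A_i\circ\prod_j A_j$.
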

\begin{Proof}
Let
$A:=F_qw^{-1}$ and put $A_i:=A^{i}$ for $i=0,...,t-1$ and 
$A_j:=A^{jt}$ for $j=0,...,k-1$. Then
$$ \prod_j A_j =N_{(F^t_qw^{-t})^{k} / (F^t_qw^{-t})}=N_{(F^t_q)^{k} / (F_q)^t}=1+q^t+q^{2t}+\ldots +q^{(k-1)t}={\rm Norm}_{\bbF_{q^{kt}}/\bbF_{q^t}}.$$ But
${\rm Norm}_{\bbF_{q^{kt}}/\bbF_{q^t}}(\zeta_{q^{kt}-1})=\zeta_{q^t-1}$
by definition of $(\zeta_{p^{i}-1})_{i=1}^\infty$ and so
$$N_{(F_qw^{-1})^{kt} / F_qw^{-1}}( \mu ) (\zeta_{q^{kt}-1})= \prod_{i,j} A_i\circ A_j ( \mu ) (\zeta_{q^{kt}-1})=\prod_{i} A_i(\prod_j A_j ( \mu ) (\zeta_{q^{kt}-1}))=
(\prod_{i} A_i)( \mu ) (\zeta_{q^{t}-1})$$
and this is  
$N_{(F_qw^{-1})^{t} / F_qw^{-1}}( \mu ) (\zeta_{q^{t}-1})$.
\end{Proof}
 \begin{Rem*}\label{Pt_good} Let $\mathbf{\whG}=GL_n$. Let $(w,\mu)\in W\times X_*(\mathbf{\whT})$ with $\mu=(\mu_1,...,\mu_n)$. For any $1\leq i \leq n$, let $n_i$ be the smallest positive integer such that $w^{n_i}(i)=i$. According to a definition of Herzig \cite[Def. 6.19]{H09}, the pair $(w,\mu)$ is called {\it good} if for all $i$ 
$$ \sum_{k\; {\rm mod} \;n_i} \mu_{w^k(i)}q^k \not\equiv 0\mod \frac{q^{n_i}-1}{q^d-1} $$
for all $d\mid n_i, d\neq n_i$. The property {\it good} depends only on the 
$X_*(\mathbf{\whT})\rtimes W$-orbit of the pair $(w,\mu)$ and the restriction of the above bijection to good orbits yields a bijection \cite[Prop. 6.19]{H09}
\begin{eqnarray*}
\{\textrm{good $X_*(\mathbf{\whT})\rtimes W$-orbits in  $W\times X_*(\mathbf{\whT})$}\} & \iso & \{\textrm{maximally split dual pairs $(\hT',s)$}\}/ \hG(\bbF_q).
\end{eqnarray*}
\end{Rem*}
\begin{Ex*} \label{ex_det}
Let $\mathbf{\whG}=GL_n$ and assume $\hL=GL_{n_1}\times\cdot\cdot\cdot\times GL_{n_r}$. 
We identify the parabolic subgroup $W_\hL$ of $W=S_n$ with the product of symmetric groups $S_{n_1}\times\cdots\times S_{n_r}$.
Define
$$ \cox_{\hL}:=(1,...,n_1)\cdot\cdot\cdot (1,...,n_r)\in W_{\hL}.$$
For any $\mu\in X_*(\mathbf{\whT})$ Lem. \ref{Lem_cox} implies that 
$$s_{\cox_{\hL},\mu}\in \Hom(I_p^t, \hT)/W_{\hL})^{F_q,\rm cox}.$$
Writing 
 $\mu_i$ for the projection of $\mu$ to the $i$-th block and $\cox_i$ for the $i$-th cycle of $\cox_{\hL}$ we have in the $i$-th block 
 $$(s_{\cox_{\hL},\mu})_i=N_{(F_q\cox_i^{-1})^t / F_q\cox_i^{-1}}( \mu_i ) (\zeta_{q^{t}-1})=N_{(F_q\cox_i^{-1})^{n_i}/ F_q\cox_i^{-1}}( \mu_i ) (\zeta_{q^{n_i}-1}),$$
 where we have used \ref{lem_divisor} and the fact that $t$ may be taken to be the least common multiple of the $n_i$. Note that the top entry of the diagonal matrix 
 $(s_{\cox_{\hL},\mu})_i\in (\bbG_m)^{n_i}\subseteq GL_{n_i}$ is of the form $\zeta_{q^{n_i}-1}^{\sum_{k=1}^{n_i} \mu_{ik}q^k}$. Now according to \ref{Rem_irr}, the element $s_{\cox_{\hL},\mu}$ lies in the subset $\Hom(I_p^t, \hT)/W_{\hL})^{F_q,\rm reg}$ of $\Hom(I_p^t, \hT)/W_{\hL})^{F_q,\rm cox}$ 
if and only if for every fixed $i$, all elements in $(s_{\cox_{\hL},\mu})_i$ are pairwise different. 
Writing $\mu_{ik}$ for the $k$-th coordinate ($k=1,...,n_i)$ of $\mu_i$ this means that for every $i$ the element
$\zeta_{q^{n_i}-1}^{\sum_{k=1}^{n_i} \mu_{ik}q^k}$ is in no proper subfield of $\bbF_{q^{n_i}}$, i.e. that  $$\sum_{k=1}^{n_i} \mu_{ik}q^k \not\equiv 0\mod \frac{q^{n_i}-1}{q^d-1} $$
for all $d\mid n_i, d\neq n_i$. Summing up, we see that 
$s_{\cox_{\hL},\mu}\in \Hom(I_p^t, \hT)/W_{\hL})^{F_q,\rm reg}$ if and only if 
the pair $(\cox_{\hL},\mu)$ is good, in the language of \ref{Pt_good}.  

We finally note that 
 
 $$\det s_{\cox_{\hL},\mu}=\det \mu(\zeta_{q-1})$$ as elements in $\bbG_m$. In particular, for $\rho=(n-1,n-2,...,1,0)$ 
 $$\det s_{\cox_{\hL},\rho}=\zeta_{q-1}^{1+2+\ldots +n-1}.$$ Indeed, passing to the determinant in the $i$-th block and writing 
$ \mu_{ik}(\zeta_{q^{n_i}-1})$ for the $k$-th coordinate of $\mu_i(\zeta_{q^{n_i}-1})\in(\bbG_m)^{n_i}\subseteq GL_{n_i},$
 one obtains
 $$\det (s_{\cox_{\hL},\mu})_i= \prod_{k=1}^{n_i} \mu_{ik}(\zeta_{q^{n_i}-1})
 ^{1+q+\ldots +q^{n_i-1}}=\prod_{k=1}^{n_i} \mu_{ik}(\zeta_{q^{n_i}-1}
 ^{1+q+\ldots +q^{n_i-1}})=\prod_{k=1}^{n_i} \mu_{ik}(\zeta_{q-1})
 =\det\mu_i(\zeta_{q-1}),$$
 where we used $\zeta_{q^{n_i}-1}
 ^{1+q+\ldots +q^{n_i-1}}=\zeta_{q-1},$  by definition of $(\zeta_{p^{i}-1})_{i=1}^\infty$.
 Taking the product over all blocks $i$, one obtains $\det s_{\cox_{\hL},\mu}=\det \mu(\zeta_{q-1})$, as claimed.
  \end{Ex*}

\subsection{Dot actions on finite dual tori for $GL_n$} 
We assume $\mathbf{\whG}=GL_n$ throughout this subsection. 

\vskip5pt

As always in this case, let $\hT\subset\hB$ be the subgroups of diagonal and upper triangular matrices respectively. Let $\Delta\subset \Phi(\hG,\hT)$ be the standard subset of simple roots for this choice. 
We keep the choice of topological generator $(\zeta_{p^{i}-1})_{i=1}^\infty$ for the group $\varprojlim_i \bbF^{\times}_{p^i}$ from the preceding section. Let $\hL\in\widehat{\cL}$ be a standard Levi subgroup of $\hG$.
 \begin{Pt*} \label{Pt_rappel} We recall some stabilizers for the Weyl action on standard groups, cf.  \cite[2.6/2.8]{BZ77}. The group $W$ is identified with the subgroup of $N_\hG(\hT)$ consisting of elementary monomial matrices. 
 As such, it acts on the set of Levi subgroups of $\mathbf{\whG}$ by conjugation.
 The subset $W(\hL,\star)$ of $W$ consists of those elements that map $\hL$ to some standard Levi from $\widehat{\cL}$. Let $W_\hL \subset W$ be the parabolic subgroup of $W$ associated with $\hL$. The quotient set $W(\hL,\star)/W_\hL$ is in fact the permutation group on the set of blocks of $\hL$. It therefore identifies canonically with a subgroup of $W$. As such, its action on 
$\hT$ or $X_*(\hT)$ factors into an action on the quotient $\hT/W_\hL$ or $X_*(\hT)/W_\hL$.
On the other hand, the group
$W(\hL)=N_{\hG}(\hL)/\hL$ identifies with the subgroup of $W(\hL,\star)/W_\hL$ consisting of those permutations that preserve blocks of the same size. In other words, it is the stabilizer subgroup of $\hL$ in the group $W(\hL,\star)/W_\hL$. In particular $W(\hL)=1$ if and only if all blocks of $\hL$ have different seize. Finally, it is clear that the action of $W(\hL)$ on 
$X_*(\hT)$ commutes with the action of $W_\hL$.

\end{Pt*}

   
 \begin{Pt*}
  We first explain how to produce suitable systems of representatives in $X_*(\hT)$ for the 
 $W_\hL$-orbits in $\hT(\bbF_q)/W_\hL$.
 Supppose given a $\bbZ$-basis $\eta_1,...,\eta_n$ of $X_*(\hT)$ which is $W_{\hL}$-stable.  We have the surjection 
  $$ \ev_{\zeta_{q-1}}: X_*(\hT)\longrightarrow \hT(\bbF_q), \;\lambda\mapsto\lambda(\zeta_{q-1}).$$
  Via the basis $\eta_i$, it translates into the surjection $\bbZ^n\mapsto (\bbZ/(q-1))^n$.
  Hence the subset of $X_*(\hT)$ consisting of all linear combinations
  $ \sum_i a_i\eta_i$ with $0\leq a_i \leq q-2$ 
  bijects with  $\hT(\bbF_q)$ under $\ev_{\zeta_{q-1}}$. Obviously, $W_{\hL}$ still acts on the latter subset and we may choose a system of representatives $X_*(\hT)^{\hL}$ for the $W_{\hL}$-orbits. If we denote by $[\ev_{\zeta_{q-1}}]_{\hL}$ the map 
 $ \ev_{\zeta_{q-1}}$ followed by the quotient map $\hT(\bbF_q)\rightarrow \hT(\bbF_q)/W_\hL$, then by construction 
 $$ [\ev_{\zeta_{q-1}}]_{\hL}: X_*(\hT)^{\hL}\iso \hT(\bbF_q)/W_\hL.$$
 \end{Pt*}
 
 \begin{Rem*} \label{Rem_can_choice} 
 There are canonical choices for
 $X_*(\hT)^{\hL}$ (modulo our choice of $\hB$). For example the cocharacters $\eta_1,...,\eta_n$ given by $\eta_i(x)=\diag(1,...,x,1...,1)$, where $x$ is in position $i$, form a {\it canonical} basis of  $X_*(\hT)$, which is $W_{\hL}$-stable. Let us identify $X_*(\hT)$ with $\bbZ^n$ for the moment via this basis. 
 Let $I\subset \{1,...,n-1\}$ be the subset indexing the simple roots $\Delta(\hL)\subseteq \Delta$ belonging to $\hL$.
Imposing the dominance condition $0\leq a_i-a_{i+1}$ for all $i\in I$, one forms the subset
 $$ 'X_*(\hT)^{\hL}:=\{ (a_1,....,a_n)\in\bbZ^n: a_i\in \{0,...,q-2\} \;\forall i\; \hskip5pt \text{and} \hskip5pt a_{i+1}\leq a_i \; \forall \;i\in I\}.$$ It bijects with
 $\hT(\bbF_q)/W_\hL$ under the map $[\ev_{\zeta_{q-1}}]_{\hL}.$
 
 In the special case $\hL=\hT$, the condition of $W_{\hL}$-invariance is empty and there is a second canonical choice.  Namely, one may take the  $n$ "fundamental coweights"
 $\omega_i=\eta_1+\cdots +\eta_i$ as a basis of $X_*(\hT)$. 
 The first $n-1$ coordinates of a weight $\mu\in X_*(\hT)$ in this basis are given by the 
 values $\langle \mu,\hat{\alpha}_i \rangle$ where $\hat{\alpha}_i$ ranges over the $n-1$ simple roots of $\hT$, whence the resulting system of representatives is a subset of 
 $$\{ (a_1,....,a_n)\in\bbZ^n: 0\leq a_i-a_{i+1}\leq q-2\; \forall i\}. $$

 \end{Rem*}

  \begin{Pt*}
  We mark the dominant chamber (relative to $W_\hL$) with a chosen Coxeter element $\cox_{\hL}$ in $W_\hL$, for example
 $$ \cox_{\hL}=(1,...,n_1)\cdot\cdot\cdot (1,...,n_r)$$
 where $\hL=GL_{n_1}\times\cdot\cdot\cdot\times GL_{n_r}$. Note that we could have obtained any Coxeter element in $W_\hL$ by passing to a $W_\hL$-conjugate of the dominant chamber.
 \end{Pt*}

\begin{Pt*}
Let 
 $$\rho=(n-1,n-2,...,1,0)\in X_*(\hT).$$ There is the usual dot action
 $w\bullet \mu:= w(\mu+\rho)-\rho$ of $W$ on  $X_*(\hT)$. 
 \end{Pt*}
 \begin{Def*} \label{Def_generic} We say that a weight 
 $\mu\in X_*(\hT)^{\hL}$ is {\it generic} in $X_*(\hT)^{\hL}$ if its dot orbit $W(\hL)\bullet\mu$ is fully contained in $X_*(\hT)^{\hL}$.  
 \end{Def*}
 Of course, when $W(\hL)$ is trivial, the condition is empty. This is the case precisely if all blocks of $\hL$ have different seize, cf. \ref{Pt_rappel}. 
 In the following, we give an alternative characterization of the  property "generic".

\begin{Lem*}\label{lem_commute} The dot action of $W(\hL)$ on $X_*(\hT)$ commutes with the action of $W_\hL$. 
\end{Lem*}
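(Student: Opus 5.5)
The plan is to reduce the claim to two facts. The first is the commutation of the linear action of $W(\hL)$ with $W_\hL$, recorded in \ref{Pt_rappel}. The second is a small computation showing that the "shift" $\sigma\rho-\rho$ is $W_\hL$-invariant for every $\sigma\in W(\hL)$.

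Throughout, write $\hL=GL_{n_1}\times\cdots\times GL_{n_r}$, and realize $\sigma\in W(\hL)\subset W(\hL,\star)/W_\hL\subset W$ as the block permutation matrix. This is the permutation that moves the $j$-th block of positions onto the $\sigma(j)$-th block, with $n_j=n_{\sigma(j)}$, and preserves the internal order of positions within each block. For $\sigma\in W(\hL)$, $w\in W_\hL$ and $\mu\in X_*(\hT)$ we compute directly:
$$\sigma\bullet(w\mu)=\sigma w\mu+\sigma\rho-\rho,\qquad w(\sigma\bullet\mu)=w\sigma\mu+w(\sigma\rho-\rho).$$
By \ref{Pt_rappel}, $\sigma w\mu=w\sigma\mu$ on $X_*(\hT)$. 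More precisely, $\sigma w\sigma^{-1}\in W_\hL$, and this element is the one corresponding to $w$ under the identification of blocks of equal size given by $\sigma$; this is the sense of "commutes" used there. It therefore suffices to show that the vector $\sigma\rho-\rho$ is fixed by $W_\hL$.

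For this step, observe that on the $j$-th block (positions $m_j+1,\dots,m_j+n_j$) the cocharacter $\rho=(n-1,\dots,1,0)$ has entries $(n-1-m_j,\,n-2-m_j,\dots,n-n_j-m_j)$. These form a run of $n_j$ consecutive integers in decreasing order. Since $\sigma$ preserves the internal order of each block and only matches blocks of the same size, the entries of $\sigma\rho$ on block $\sigma(j)$ are again a decreasing run of $n_j$ consecutive integers. Hence $\sigma\rho-\rho$ is constant on each block, equal to $m_{\sigma(j)}-m_j$ or its negative depending on the convention for $\sigma$ acting on coordinates. A cocharacter that is constant on each block is fixed by $W_\hL=S_{n_1}\times\cdots\times S_{n_r}$, which permutes positions only within blocks. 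Substituting back into the display gives $\sigma\bullet(w\mu)=w(\sigma\bullet\mu)$, or $\sigma\bullet(w\mu)=(\sigma w\sigma^{-1})(\sigma\bullet\mu)$ under the more careful reading. In either case the dot action of $W(\hL)$ permutes $W_\hL$-orbits, which is what is needed later.

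The only delicate point is bookkeeping. One has to make sure that the chosen lift of $\sigma$ is the order-preserving block permutation, since another lift differs by an element of $W_\hL$ and would spoil constancy on blocks. One also has to be explicit about which of the two senses of "commutes" is meant; I would state the identity in the form $\sigma\bullet(w\mu)=(\sigma w\sigma^{-1})(\sigma\bullet\mu)$ and note that it specializes to literal commutation under the identification of \ref{Pt_rappel}.
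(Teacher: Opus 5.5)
Your proposal is correct and follows the same route as the paper. Both arguments show that $\sigma\rho-\rho$ is constant on the blocks of $\hL$, hence fixed by $W_\hL$, and substitute this into $\sigma\bullet\mu=\sigma\mu+\sigma\rho-\rho$. Your twisted identity $\sigma\bullet(w\mu)=(\sigma w\sigma^{-1})(\sigma\bullet\mu)$ is the accurate form of the lemma. The paper's proof tacitly uses literal commutation $\sigma w\mu=w\sigma\mu$, which fails, e.g.\ for $\hL=GL_2\times GL_2$, $\sigma=(13)(24)$, $w=(12)$. So commit to the twisted identity from the start, rather than first citing literal commutation from \ref{Pt_rappel} and then hedging.
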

\begin{proof} Suppose that $\hL=GL_{n_1}\times\cdot\cdot\cdot \times GL_{n_r}$.
Let $w\in W(\hL,\star)/W_\hL$ and $\mu\in X_*(\hT).$ 
Then $w\bullet \mu=w(\mu)+w(\rho)-\rho.$ The cocharacter $w(\rho)-\rho$ of $\hT$, viewed as the torus in the Levi subgroup $w\hL w^{-1}=GL_{n_{w(1)}}\times\cdot\cdot\cdot \times GL_{n_{w(r)}}$ is constant (with integer value) on each block of the latter Levi: its value in the block $GL_{n_{w(i+1)}}$ is the difference between the coordinate of $\rho$ at the position $n_1+\cdot\cdot\cdot+n_{i}+1$ and the coordinate of $\rho$ at the position $n_1+\cdot\cdot\cdot+n_{w(i)}+1$. Now if $w\in W(\hL)$, then 
$w\hL w^{-1}=\hL$ and the above observation implies that $w(\rho)-\rho$ is fixed by $W_\hL$. 
It follows for given $\nu\in W_\hL$ that 
$$ w\bullet (\nu(\mu))=w(\nu (\mu))+w(\rho)-\rho=w(\nu (\mu))+\nu (w(\rho)-\rho)= \nu(w(\mu)+w(\rho)-\rho)=\nu ( w\bullet \mu).$$

\end{proof}
\begin{Pt*} \label{Pt_modular_dot} There is a dot action of $W$ on the finite torus $\hT(\bbF_q)$ given by 
$$w\bullet t= w(t \rho(\zeta_{q-1}))\;\rho(\zeta_{q-1})^{-1}.$$ 
It makes the map 
$\ev_{\zeta_{q-1}}:  X_*(\hT)\ra \hT(\bbF_q)$
dot equivariant. Note also that $s\bullet t= s(t)\alpha_s^{-1}(\zeta_{q-1})$ for a simple reflection $s=s_\alpha$, because $s_\alpha(\rho)=\rho-\alpha_s$.

\end{Pt*}
\begin{Lem*} The induced dot action of the subgroup $W(\hL)$ on $\hT(\bbF_q)$ commutes with the action of $W_\hL$. 
In particular, the dot action of $W(\hL)$ passes to the quotient $\hT(\bbF_q)/W_\hL.$
\end{Lem*}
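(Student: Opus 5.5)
The plan is to deduce the statement from Lemma \ref{lem_commute} by transporting it along the surjection $\ev_{\zeta_{q-1}}: X_*(\hT)\ra \hT(\bbF_q)$. The two facts needed are already recorded in \ref{Pt_modular_dot}: $\ev_{\zeta_{q-1}}$ is equivariant for the dot actions of $W$, and it is a group homomorphism that commutes with the linear $W$-action, hence in particular with the action of $W_\hL$.

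First I will make explicit how $W(\hL)$ acts, as in \ref{Pt_rappel}: it is a subgroup of $W$, and its dot action on $\hT(\bbF_q)$ is the restriction of the dot action of $W$ from \ref{Pt_modular_dot}. Then I fix $w\in W(\hL)$, $\nu\in W_\hL$ and $t\in\hT(\bbF_q)$. By surjectivity of $\ev_{\zeta_{q-1}}$ I choose $\mu\in X_*(\hT)$ with $t=\mu(\zeta_{q-1})$. Using first linear equivariance, then dot equivariance, I compute
$$w\bullet(\nu(t))=w\bullet \ev_{\zeta_{q-1}}(\nu(\mu))=\ev_{\zeta_{q-1}}(w\bullet\nu(\mu)).$$
Lemma \ref{lem_commute} turns the right-hand side into $\ev_{\zeta_{q-1}}(\nu(w\bullet\mu))$. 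Applying the two equivariances in the opposite order then gives $\nu(w\bullet t)$. This proves the first assertion.

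Alternatively, one can argue directly on the torus. Write $w\bullet t=w(t)\cdot w(\rho(\zeta_{q-1}))\rho(\zeta_{q-1})^{-1}=w(t)\cdot (w(\rho)-\rho)(\zeta_{q-1})$. In the proof of Lemma \ref{lem_commute} it is shown that $w(\rho)-\rho$ is $W_\hL$-fixed when $w\in W(\hL)$. It remains to check that $w$ and $\nu$ commute as linear maps. This holds because $W(\hL)$, realized as block permutations preserving block sizes, normalizes $W_\hL$, and commutation of the actions on $X_*(\hT)$ is exactly what was asserted at the end of \ref{Pt_rappel}.

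For the second assertion, the commutation shows that $w\bullet$ sends the $W_\hL$-orbit of $t$ into the $W_\hL$-orbit of $w\bullet t$. Hence the dot action descends to a well-defined action of $W(\hL)$ on $\hT(\bbF_q)/W_\hL$. The group axioms are inherited from the action upstairs. The only delicate point is to be precise about how $W(\hL)$ is embedded in $W$, so that Lemma \ref{lem_commute} applies verbatim. With that fixed, no real obstacle remains.
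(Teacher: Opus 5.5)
\textbf{There is a genuine gap: the commutation you rely on is false elementwise.}

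Your first argument is the paper's proof. You lift $t$ along $\ev_{\zeta_{q-1}}$, apply Lemma \ref{lem_commute}, and push back down using that $\ev_{\zeta_{q-1}}$ is equivariant for both the linear and the dot action. The problem is the commutation itself, which your second argument tries to justify. From the fact that $W(\hL)$ normalizes $W_\hL$ you only get $w\nu w^{-1}\in W_\hL$, not $w\nu w^{-1}=\nu$, and elementwise commutation actually fails.

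Take $\hL=GL_2\times GL_2\subset GL_4$, $w=(13)(24)$ (the block swap) and $\nu=(12)$. Then $w\nu w^{-1}=(34)$ and $w(\rho)-\rho=(-2,-2,2,2)$. For $t=\diag(1,1,\zeta_{q-1},1)$ you have $\nu(t)=t$, but $w\bullet t=\diag(\zeta_{q-1}^{-1},\zeta_{q-1}^{-2},\zeta_{q-1}^{2},\zeta_{q-1}^{2})$ is not fixed by $\nu$ once $q\geq 3$. Hence $w\bullet(\nu(t))\neq\nu(w\bullet t)$.

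The same computation for $\mu=(a,b,c,d)\in X_*(\hT)$ gives $w\bullet\nu(\mu)=(c-2,d-2,b+2,a+2)$ but $\nu(w\bullet\mu)=(d-2,c-2,a+2,b+2)$. So Lemma \ref{lem_commute}, read elementwise, is also false. Its proof, and the last sentence of \ref{Pt_rappel} that you cite, rest on the false identity $w(\nu(\mu))=\nu(w(\mu))$. No other lift of $W(\hL)$ to $W$ fixes this, since the centralizer of $W_\hL$ in $S_4$ is $W_\hL$ itself. So both of your routes prove a first assertion that is not true in this literal form.

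What does hold is the following, and your direct computation proves it once you conjugate instead of commuting:
\[ w\bullet(\nu(t))=(w\nu w^{-1})(w\bullet t),\qquad w\nu w^{-1}\in W_\hL . \]
This follows because $w(\nu(t))=(w\nu w^{-1})(w(t))$, and $(w(\rho)-\rho)(\zeta_{q-1})$ is fixed by all of $W_\hL$, since $w(\rho)-\rho$ is constant on the blocks of $\hL$. Equivalently, correct Lemma \ref{lem_commute} to $w\bullet\nu(\mu)=(w\nu w^{-1})(w\bullet\mu)$ and transport it along $\ev_{\zeta_{q-1}}$ exactly as in your first argument.

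This identity says precisely that $w\bullet$ carries the orbit $W_\hL t$ onto $W_\hL(w\bullet t)$. So your descent paragraph goes through verbatim, and the second assertion is correct. That assertion is what is needed afterwards to define the modular dot action used in Lemma \ref{Lem_two_actions} and Corollary \ref{Cor_dot_stable}. In the first assertion, ``commutes'' must be weakened to ``normalizes''.
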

\begin{proof} Let $w\in W(\hL)$ and $\nu\in W_\hL$ and take $t\in\hT(\bbF_q)$.
Write $t=\lambda(\zeta_{q-1})$ for some $\lambda\in X_*(\hT)$.
The above lemma implies the equality
$w(\nu\bullet \lambda)=\nu\bullet w(\lambda)$. Evaluating on $\zeta_{q-1}$ and using that evaluation respects both the natural and the dot actions yields $w(\nu\bullet t)=\nu\bullet w(t)$.
\end{proof}
 
Using the isomorphism $$ [\ev_{\zeta_{q-1}}]_{\hL}: X_*(\hT)^{\hL}\iso \hT(\bbF_q)/W_\hL$$
from above, the dot action of $W(\hL)$ on $\hT(\bbF_q)/W_\hL$ induces by transport of structure an action of $W(\hL)$ on $X_*(\hT)^{\hL}$. We refer to it as the {\it modular dot action} of $W(\hL)$ on $X_*(\hT)^{\hL}$ and denote it by $\bullet_{\rm mod}$.

\begin{Lem*}\label{Lem_two_actions} Let $\lambda\in X_*(\hT)^{\hL}$. Then $\lambda$ is generic if and only if
the modular dot action of $W(\hL)$ on $\lambda$ coincides with the usual dot action of $W(\hL)$. 
\end{Lem*}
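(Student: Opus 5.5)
The plan is to unwind both actions through the bijection $[\ev_{\zeta_{q-1}}]_{\hL}: X_*(\hT)^{\hL}\iso \hT(\bbF_q)/W_\hL$ and compare them pointwise. By definition, for $w\in W(\hL)$ and $\lambda\in X_*(\hT)^{\hL}$, the element $w\bullet_{\rm mod}\lambda$ is the unique element of $X_*(\hT)^{\hL}$ with
$$[\ev_{\zeta_{q-1}}]_{\hL}(w\bullet_{\rm mod}\lambda)=w\bullet[\ev_{\zeta_{q-1}}]_{\hL}(\lambda).$$
Since $\ev_{\zeta_{q-1}}$ is dot-equivariant by \ref{Pt_modular_dot}, and the dot action of $W(\hL)$ on $X_*(\hT)$ commutes with $W_\hL$ by Lemma \ref{lem_commute} (so it descends compatibly to $X_*(\hT)/W_\hL$ and to $\hT(\bbF_q)/W_\hL$), the right-hand side equals $[\ev_{\zeta_{q-1}}]_{\hL}(w\bullet\lambda)$, where $[\ev_{\zeta_{q-1}}]_{\hL}$ is now read as the map on all of $X_*(\hT)$, namely evaluation followed by the quotient. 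Hence $w\bullet_{\rm mod}\lambda$ and $w\bullet\lambda$ always have the same image in $\hT(\bbF_q)/W_\hL$.

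For the direction "generic implies coincidence", assume $W(\hL)\bullet\lambda\subset X_*(\hT)^{\hL}$. Then $w\bullet\lambda$ and $w\bullet_{\rm mod}\lambda$ both lie in $X_*(\hT)^{\hL}$ and have the same image under the bijection $[\ev_{\zeta_{q-1}}]_{\hL}$. Injectivity on $X_*(\hT)^{\hL}$ therefore forces $w\bullet\lambda=w\bullet_{\rm mod}\lambda$ for every $w\in W(\hL)$.

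For the converse, assume $w\bullet_{\rm mod}\lambda=w\bullet\lambda$ for all $w\in W(\hL)$. Since $w\bullet_{\rm mod}\lambda\in X_*(\hT)^{\hL}$ by construction, every $w\bullet\lambda$ lies in $X_*(\hT)^{\hL}$, which is exactly Definition \ref{Def_generic}.

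The only point needing care is the first step: that $[\ev_{\zeta_{q-1}}]_{\hL}$, extended to all of $X_*(\hT)$, intertwines the usual dot action of $W(\hL)$ with the induced dot action on $\hT(\bbF_q)/W_\hL$. This is where the two earlier facts are used. Dot-equivariance of $\ev_{\zeta_{q-1}}$ (from $\rho(\zeta_{q-1})$ being the image of $\rho$) ensures the dot actions match before passing to quotients. Lemma \ref{lem_commute} and its finite-torus analogue ensure that both dot actions are well defined on $W_\hL$-orbits, so that the quotient map commutes with them.
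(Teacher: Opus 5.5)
Your proof is correct and follows the paper's argument. Coincidence implies genericity because the modular action stays inside $X_*(\hT)^{\hL}$. Conversely, for generic $\lambda$ both $w\bullet\lambda$ and $w\bullet_{\rm mod}\lambda$ lie in $X_*(\hT)^{\hL}$ and map to $w\bullet\bar t$, so injectivity of $[\ev_{\zeta_{q-1}}]_{\hL}$ forces them to be equal. You also spell out why $w\bullet\lambda$ maps to $w\bullet\bar t$ (dot-equivariance of $\ev_{\zeta_{q-1}}$ together with commutation with $W_\hL$), which the paper leaves implicit.
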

\begin{proof}
If the two actions coincide, then 
$$W(\hL)\bullet \lambda=W(\hL)\bullet_{\rm mod}\lambda \subset X_*(\hT)^{\hL},$$
and $\lambda$ is generic. Conversely, let $\lambda$ be generic and $w\in W(\hL)$. Let 
$\overline{t}=[\ev_{\zeta_{q-1}}]_{\hL}\lambda$.
Then both elements $w\bullet \lambda$ and $w\bullet_{\rm mod}\lambda$ of 
$X_*(\hT)^{\hL}$ map to $w\bullet \overline{t}$ under $[\ev_{\zeta_{q-1}}]_{\hL}$. Since the latter map is injective on $X_*(\hT)^{\hL}$, one has $w\bullet \lambda=w\bullet_{\rm mod}\lambda$.
 \end{proof}
 \begin{Pt*} \label{n-generic} For future purposes, we call the subset which corresponds to the generic weights under the bijection 
$X_*(\hT)^{\hL}\simeq \hT(\bbF_q)/W_\hL$, {\it the subset of generic orbits} in $\hT(\bbF_q)/W_\hL$, and denote it by $$(\hT(\bbF_q)/W_\hL)_{\rm gen}.$$ 
\end{Pt*}
\begin{Cor*} \label{Cor_dot_stable} The subset $(\hT(\bbF_q)/W_\hL)_{\rm gen}$ of  $\hT(\bbF_q)/W_\hL$ is stable under the dot action of $W(\hL)$. 
\end{Cor*}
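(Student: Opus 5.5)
We have to show: if $\overline{t}\in(\hT(\bbF_q)/W_\hL)_{\rm gen}$ and $w\in W(\hL)$, then $w\bullet\overline{t}$ is again generic. We transport the question to $X_*(\hT)^{\hL}$ via the bijection $[\ev_{\zeta_{q-1}}]_{\hL}$ and use Lemma \ref{Lem_two_actions} to identify the two dot actions on generic weights.

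Let $\lambda\in X_*(\hT)^{\hL}$ be the generic weight with $[\ev_{\zeta_{q-1}}]_{\hL}(\lambda)=\overline{t}$. By Definition \ref{Def_generic}, the whole orbit $W(\hL)\bullet\lambda$ lies in $X_*(\hT)^{\hL}$. Put $\lambda':=w\bullet\lambda$, so that $\lambda'\in X_*(\hT)^{\hL}$. Since $\ev_{\zeta_{q-1}}$ is dot-equivariant (\ref{Pt_modular_dot}), we have
\[
[\ev_{\zeta_{q-1}}]_{\hL}(\lambda')=w\bullet\overline{t}.
\]
Hence $\lambda'$ is the unique representative of $w\bullet\overline{t}$ in $X_*(\hT)^{\hL}$. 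This is consistent with Lemma \ref{Lem_two_actions}, which gives $w\bullet_{\rm mod}\lambda=w\bullet\lambda$.

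It remains to show that $\lambda'$ is generic. Because $W(\hL)$ is a group, the usual dot action satisfies
\[
W(\hL)\bullet\lambda'=W(\hL)\bullet(w\bullet\lambda)=(W(\hL)w)\bullet\lambda=W(\hL)\bullet\lambda.
\]
The right-hand side is contained in $X_*(\hT)^{\hL}$ by genericity of $\lambda$. So $\lambda'$ is generic, and therefore $w\bullet\overline{t}\in(\hT(\bbF_q)/W_\hL)_{\rm gen}$.

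The only point needing care is that the dot action of $W(\hL)$ on the quotient $\hT(\bbF_q)/W_\hL$ is well defined. This is exactly the lemma preceding the modular dot action, which relies on Lemma \ref{lem_commute}. Once that is in place, the argument is purely formal: the orbit of a generic weight is closed under the group action and contained in the system of representatives.
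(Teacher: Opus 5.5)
Your proof is correct and is essentially the paper's argument. The paper transports the dot action to the generic weights in $X_*(\hT)^{\hL}$, uses Lemma \ref{Lem_two_actions} to see that there it coincides with the usual dot action, and concludes from Definition \ref{Def_generic}; you make both steps explicit, namely identifying $w\bullet\lambda$ as the representative of $w\bullet\overline{t}$ (the content of that lemma) and the orbit identity $W(\hL)\bullet(w\bullet\lambda)=W(\hL)\bullet\lambda$.
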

\begin{Proof} Under the identification with the generic weights in $X_*(\hT)^{\hL}$, the dot action of $W(\hL)$ becomes what we called the modular dot action. By Lem. \ref{Lem_two_actions}, it therefore coincides with the usual dot action and the stability follows from the definition of generic weight, cf.  \ref{Def_generic}.
\end{Proof}

To show that our definition of generic weight is meaningful, we give an example of a canonical system of representatives $X_*(\hT)^{\hL}$, whose proportion of generic weights tends to $1$, as $p$ tends to infinity.

 \begin{Pt*} We reconsider the constructions appearing in Rem. \ref{Rem_can_choice}. So let $\eta_1,...,\eta_n$ given by $\eta_i(x)=\diag(1,...,x,1...,1)$, where $x$ is in position $i$, which form a {\it canonical} $W_{\hL}$-stable basis of  $X_*(\hT)$. Identify $X_*(\hT)$ with $\bbZ^n$ for the moment via this basis. 
Let $I\subset \{1,...,n-1\}$ be the subset indexing the simple roots $\Delta(\hL)$ belonging to $\hL$ and form the subset
 $$ 'X_*(\hT)^{\hL}:=\{ (a_1,....,a_n): a_i\in \{0,...,q-2\}\; \forall i\; \hskip5pt \text{and} \hskip5pt a_{i+1}\leq a_i \; \forall \;i\in I\}.$$
 It bijects with $\hT(\bbF_q)/W_\hL$ under the map $[\ev_{\zeta_{q-1}}]_{\hL}.$
In this particular situation, we say $(a_1,...,a_n)$ is {\it $n$-deep} in $'X_*(\hT)^{\hL}$, if 
$n-1\leq a_i \leq q-2 - (n-1)$ for all $i\in I$. Clearly, the proportion of $n$-deep weights in 
$'X_*(\hT)^{\hL}$ tends to $1$, as $p$ tends to infinity. 
\end{Pt*}
\begin{Lem*}
Any $n$-deep weight $\mu\in {'X}_*(\hT)^{\hL}$ is generic.
\end{Lem*}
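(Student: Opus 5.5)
The plan is to verify directly, from Definition \ref{Def_generic}, that for every $w\in W(\hL)$ the weight $w\bullet\mu=w(\mu)+(w(\rho)-\rho)$ again lies in ${'X}_*(\hT)^{\hL}$. Throughout I use the explicit description of $W(\hL)$ recalled in \ref{Pt_rappel}. An element $w\in W(\hL)$ is the permutation matrix that permutes the blocks of $\hL=GL_{n_1}\times\cdots\times GL_{n_r}$ among blocks of the same size. It preserves the internal order of the coordinates inside each block.

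There are two membership conditions to check, and I would handle them in turn.

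\emph{(a) Dominance inside blocks.} I need $a_{i+1}\leq a_i$ for $i\in I$.
\begin{enumerate}[(i)]
\item Since $w$ moves a whole block $GL_{n_j}$ onto a block of the same size and keeps the order inside it, $w(\mu)$ still satisfies $a_{i+1}\leq a_i$ for all $i\in I$.
\item By the computation in the proof of Lemma \ref{lem_commute}, the cocharacter $w(\rho)-\rho$ is constant on each block of $w\hL w^{-1}=\hL$. Adding it therefore does not change differences $a_i-a_{i+1}$ with $i\in I$.
\end{enumerate}
Hence $w\bullet\mu$ satisfies the dominance condition.

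\emph{(b) The range $0\leq a_i\leq q-2$.} By the same description, the value of $w(\rho)-\rho$ on a block is a difference of two coordinates of $\rho=(n-1,\dots,0)$. Its absolute value is therefore at most $n-1$. Moreover, it is $0$ on every block fixed by $w$, since there the two positions of $\rho$ coincide.
\begin{enumerate}[(i)]
\item On fixed blocks, the coordinates of $w\bullet\mu$ are those of $\mu$ and lie in $\{0,\dots,q-2\}$.
\item On a moved block, each coordinate of $w\bullet\mu$ is $a_k+c$, where $a_k$ is a coordinate of $\mu$ and $|c|\leq n-1$. The $n$-deep hypothesis $n-1\leq a_k\leq q-2-(n-1)$ then gives $0\leq a_k+c\leq q-2$.
\end{enumerate}
This shows $w\bullet\mu\in{'X}_*(\hT)^{\hL}$, so $\mu$ is generic.

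The main obstacle is bookkeeping in the deepness hypothesis. In step (b)(ii) the bound is needed for \emph{every} coordinate $a_k$ lying in a block moved by some element of $W(\hL)$, not only for indices $k\in I$. This includes the last coordinate of a block, and all coordinates when $\hL=\hT$, where $I=\emptyset$ but $W(\hT)=S_n$.

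If the definition of $n$-deep literally covers only $i\in I$, then for the bottom coordinate of a block of size $\geq 2$ I would try to use the chain $a_{i+1}\leq a_i$ within the block. That chain gives the upper bound $a_{\rm last}\leq a_{\rm first}\leq q-1-n$, but not the lower bound. So I expect to have to read $n$-deepness as the condition $n-1\leq a_i\leq q-1-n$ for all coordinates $i$ in blocks occurring with multiplicity; equivalently, for all $i$. In the proof I would state this reading explicitly. This reading is consistent with the remark that the proportion of $n$-deep weights tends to $1$ as $p$ tends to infinity.
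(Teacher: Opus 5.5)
Your argument is the paper's: $W(\hL)$ preserves in-block dominance, and $w(\rho)-\rho$ is constant on blocks with values in $[-(n-1),n-1]$ (and, as you note, zero on fixed blocks), so depth keeps $w\bullet\mu$ inside $\{0,\dots,q-2\}^n$. Your worry about the definition is justified: with the bound imposed only for $i\in I$ the statement fails (for $\hL=\hT$ and $n=2$ one has $I=\emptyset$, yet $\mu=(0,0)$ gives $s\bullet\mu=(-1,1)\notin{'X}_*(\hT)^{\hT}$), and the paper's proof tacitly uses the bound on every coordinate of $w(\mu)$, so your reading (bound on all coordinates, or at least on those in blocks whose size is repeated) is exactly what the argument needs.
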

\begin{proof} 
The set $'X_*(\hT)^{\hL}$ is stable under the natural action of $W(\hL)$, since this action preserves the block wise dominance condition $0\leq a_{i+1}-a_i$ for $i\in I$ (and trivially preserves the condition $a_i\in\{0,...,q-2\}$ for all $i$). The action stabilizes the subset of $n$-deep weights, since it preserves the block wise condition $n-1\leq a_i \leq q-2 - (n-1)$ for all $i\in I $. So if $w\in W(\hL)$, then 
$w(\mu)\in {'X}_*(\hT)^{\hL}$ is again $n$-deep. As we have seen in the proof of Lem. \ref{lem_commute} above, the weight $w(\rho)-\rho$ is constant on each block of $\hL$. Adding it to 
$w(\mu)$ therefore still preserves the dominance condition $0\leq a_{i+1}-a_i$ for $i\in I$. Moreover, the constant value on each block lies in the real interval $[-(n-1),n-1)]$. Hence adding 
$w(\rho)-\rho$ to the $n$-deep weight $w(\mu)$ still gives a weight in $'X_*(\hT)^{\hL}$, i.e.
$w\bullet \mu= w(\mu)+ w(\rho)-\rho\in {'X}_*(\hT)^{\hL}$. Hence $\mu$ is generic. 
\end{proof}


\subsection{Galois parametrization of Vinberg strata for $GL_n$}
We keep the assumptions from the previous subsection, in particular $\mathbf{\whG}=GL_n$.\begin{Lem*}\label{observation}
Suppose there is a map $N: X_*(\hT)\rightarrow \hT(\overline{\bbF}_p)$, which is equivariant with respect to the natural $W(\hL)$-action on $X_*(\hT)$ and $\hT(\overline{\bbF}_p)$. The map $\tilde{N}(\mu):=N(\mu+\rho)$ is then equivariant for the dot action of $W(\hL)$ on $X_*(\hT)$ and the natural action of $W(\hL)$ on $\hT(\overline{\bbF}_p)$.
\end{Lem*}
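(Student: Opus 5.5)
This is a one-line unwinding of definitions. Fix $w\in W(\hL)$, viewed as an element of $W$ via \ref{Pt_rappel}, and $\mu\in X_*(\hT)$. By the definition of the dot action, $w\bullet\mu=w(\mu+\rho)-\rho$. We then compute
$$\tilde{N}(w\bullet\mu)=N\bigl((w(\mu+\rho)-\rho)+\rho\bigr)=N\bigl(w(\mu+\rho)\bigr)=w\bigl(N(\mu+\rho)\bigr)=w\bigl(\tilde{N}(\mu)\bigr).$$
The first and last equalities are the definition of $\tilde{N}$. The third equality is the assumed $W(\hL)$-equivariance of $N$, applied to the cocharacter $\mu+\rho\in X_*(\hT)$. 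The cancellation $-\rho+\rho$ takes place in the abelian group $X_*(\hT)$.

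The only point that needs care is that $N$ is an arbitrary map of sets, not a homomorphism. So we cannot split $N(\mu+\rho)$ into $N(\mu)N(\rho)$ or compare $w(\rho)-\rho$ with anything on the torus side. The argument must therefore apply equivariance exactly once, to the shifted argument $\mu+\rho$, as above.

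We also need that the dot action of $W(\hL)$ is the restriction of the dot action of $W$ along the identification $W(\hL)\subset W$ of \ref{Pt_rappel}, and that the natural actions on $X_*(\hT)$ and $\hT(\overline{\bbF}_p)$ are likewise restrictions from $W$. With these conventions the equality $w\bullet\mu+\rho=w(\mu+\rho)$ holds on the nose. I expect no real obstacle beyond stating these conventions. Since the display holds for all $w$ and $\mu$, $\tilde{N}$ intertwines the dot action on $X_*(\hT)$ with the natural action on $\hT(\overline{\bbF}_p)$, as claimed.
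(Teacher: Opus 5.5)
Your proof is correct and is the paper's argument: rewrite $(w\bullet\mu)+\rho$ as $w(\mu+\rho)$ and apply the $W(\hL)$-equivariance of $N$ once, to $\mu+\rho$. The extra remarks about conventions are harmless but not needed.
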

\begin{Proof} $N ((\nu\bullet \mu)+\rho)=N((\nu(\mu+\rho)-\rho)+\rho)=N(\nu(\mu+\rho))=
\nu N(\mu+\rho) $ for $\nu\in W(\hL)$.
\end{Proof}
\begin{Prop*} 
Let $\hL\in\widehat{\cL}$. The map 
$$ X_*(\hT)\longrightarrow \hT(\overline{\bbF}_p), \mu \mapsto s_{\cox_{\hL},\mu+\rho}$$
is equivariant for the dot action of $W(\hL)$ on $X_*(\hT)$ and the natural action of $W(\hL)$ on $\hT(\overline{\bbF}_p)$. 
\end{Prop*}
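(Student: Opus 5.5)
By Lem. \ref{observation}, applied to $N(\mu):=s_{\cox_{\hL},\mu}$, it suffices to show that $N: X_*(\hT)\to\hT(\overline{\bbF}_p)$ is equivariant for the \emph{natural} action of $W(\hL)$ on both sides. Then $\tilde N(\mu)=N(\mu+\rho)=s_{\cox_{\hL},\mu+\rho}$ is dot-equivariant, which is the claim.

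To prove natural equivariance, I would use the block-wise description from Ex. \ref{ex_det}. Write $\hL=GL_{n_1}\times\cdots\times GL_{n_r}$ and $\cox_{\hL}=\cox_1\cdots\cox_r$. The $i$-th block of $s_{\cox_{\hL},\mu}$ is
$$N_{(F_q\cox_i^{-1})^{n_i}/F_q\cox_i^{-1}}(\mu_i)(\zeta_{q^{n_i}-1}),$$
and by Lem. \ref{lem_divisor} this depends only on $\mu_i$ and $n_i$, not on $t$ or on the other blocks. By \ref{Pt_rappel}, an element $w\in W(\hL)$, viewed inside $W$, is the permutation of blocks of equal size that maps the $k$-th position of block $i$ to the $k$-th position of block $w(i)$, preserving order within blocks. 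Hence $(w\mu)_{w(i)}=\mu_i$ as vectors in $\bbZ^{n_i}$, and likewise $(w s)_{w(i)}=s_i$ for $s\in\hT$. Since $n_{w(i)}=n_i$, the standard cycles $\cox_{w(i)}$ and $\cox_i$ are the same permutation of $\{1,\dots,n_i\}$. The norm expressions therefore coincide: $N(w\mu)_{w(i)}=N(\mu)_i=(wN(\mu))_{w(i)}$. This gives $N(w\mu)=wN(\mu)$.

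A cleaner, block-free formulation of the same point is that $w\cox_{\hL}w^{-1}=\cox_{\hL}$ for $w\in W(\hL)$, because the chosen Coxeter element is \lq\lq the same cycle in each block\rq\rq{} and $w$ only permutes equal blocks. Granting this, conjugation by $w$ commutes with $F_q\cox_{\hL}^{-1}$ on $X_*(\hT)$, since $F_q$ acts as multiplication by $q$ and commutes with $W$. It then commutes with each power and hence with the norm operator $N_{(F_q\cox_{\hL}^{-1})^t/F_q\cox_{\hL}^{-1}}$. Evaluation at $\zeta_{q^t-1}$ is $W$-equivariant, as the $W$-action on $X_*(\hT)$ is induced from the action on $\hT$. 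Together these give $N(w\mu)=wN(\mu)$ directly from the definition in Lem. \ref{Lem_cox}.

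The only real point to check carefully is the identity $w\cox_{\hL}w^{-1}=\cox_{\hL}$. This requires using the specific lift of $W(\hL)$ into $W$ described in \ref{Pt_rappel} (order-preserving block permutations) together with the specific choice of $\cox_{\hL}$ as the standard increasing cycle in each block. With a different Coxeter element, or a different section $W(\hL)\to W$, equivariance would only hold up to $W_{\hL}$-conjugacy. I expect this to be the main step where care is needed, but it is a direct verification on permutations.
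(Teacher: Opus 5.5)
Your proposal is correct and follows the paper's proof: you reduce via Lem.~\ref{observation} to natural $W(\hL)$-equivariance of $\mu\mapsto s_{\cox_{\hL},\mu}$, which holds because elements of $W(\hL)$ commute with $F_q\cox_{\hL}^{-1}$ and hence with the norm operator. Your identification of the real input as $w\cox_{\hL}w^{-1}=\cox_{\hL}$ for the order-preserving block permutations is sharper than the paper. The paper asserts that $W(\hL)$ commutes with all of $W_{\hL}$, which is false elementwise (e.g.\ $(13)(24)$ conjugates $(12)$ to $(34)$ for $GL_2\times GL_2\subset GL_4$), but it holds for the chosen Coxeter element, and that is all the argument needs.
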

\begin{Proof}
The map $$ X_*(\hT)\rightarrow \hT(\overline{\bbF}_p), \mu \mapsto s_{w,\mu}.$$ 
is equivariant with respect to the natural $W(\hL)$-action on $X_*(\hT)$ and $\hT(\overline{\bbF}_p)$. Indeed, let $\nu\in W(\hL)$ and $w\in W_{\hL}$. Then 
$$ s_{w,\nu \mu}= N_{(F_qw^{-1})^t / F_qw^{-1}}( \nu \mu ) (\zeta_{p^{t}-1})= \nu 
( N_{(F_qw^{-1})^t / F_qw^{-1}}(\mu ) ) (\zeta_{p^{t}-1}) = \nu s_{w,\mu},$$
where we use that $\nu$ commutes with $w^{-1}\in W_{\hL}$ and $F_q$, hence also with $N_{(F_qw^{-1})^t / F_qw^{-1}}$. The proposition follows thus from Lem. \ref{observation}.
\end{Proof}

\begin{Cor*} 
Let $\hL\in\widehat{\cL}$. The map 
$ X_*(\hT)\longrightarrow \hT(\overline{\bbF}_p), \mu \mapsto s_{\cox_{\hL},\mu+\rho}$
induces a canonical map (i.e. independent of the choice of Coxeter element $\cox_{\hL}$ in $W_\hL$)
$$ \tau_\hL: X_*(\hT)\longrightarrow \Hom(I_p^t, \hT)/W_{\hL})^{F_q,\rm cox}, $$ 
which is
equivariant for the dot action of $W(\hL)$ on $X_*(\hT)$ and the natural action of $W(\hL)$ on the target. 
 \end{Cor*}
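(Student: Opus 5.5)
The map $\tau_\hL$ will be the composite of $\mu\mapsto s_{\cox_{\hL},\mu+\rho}$ with the identification of \ref{Pt_types} (for $\hL$ in place of $\hG$). Concretely, an element $s\in\hT(\overline{\bbF}_p)$ of order prime to $p$ determines $\psi_s\in\Hom(I_p^t,\hT)$ with $\psi_s(M)=s$, where $M$ is the topological generator fixed by $(\zeta_{p^i-1})$ in \ref{Pt_can_iso}. We then set $\tau_\hL(\mu):=W_\hL\psi_{s_{\cox_\hL,\mu+\rho}}$. Three things must be checked: the value lies in the Coxeter $F_q$-stable part, it is independent of the choice of Coxeter element, and it is $W(\hL)$-equivariant.

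For the first point, Lem. \ref{Lem_cox} (applied in $\hL$) gives $F_q.s_{\cox_\hL,\mu+\rho}=\cox_\hL.s_{\cox_\hL,\mu+\rho}$. Hence the $W_\hL$-orbit is $F_q$-stable, and $F_q$ acts on the representative through the Coxeter element $\cox_\hL$ of $W_\hL$, as already noted in Ex. \ref{ex_det}.

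For independence, let $c'$ be another Coxeter element of $W_\hL$. Coxeter elements of $W_\hL$ are conjugate in $W_\hL$, so $c'=\sigma\cox_\hL\sigma^{-1}$ with $\sigma\in W_\hL$. Since $\sigma$ commutes with $F_q$, the defining formula gives
$$N_{(F_qc'^{-1})^t/F_qc'^{-1}}(\mu)=\sigma\, N_{(F_q\cox_\hL^{-1})^t/F_q\cox_\hL^{-1}}(\sigma^{-1}\mu).$$
Therefore $s_{c',\mu+\rho}=\sigma\, s_{\cox_\hL,\sigma^{-1}(\mu+\rho)}$, which is not literally the same element as $s_{\cox_\hL,\mu+\rho}$ because the argument $\mu+\rho$ is twisted. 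This is the main obstacle. I would resolve it by working within one $F_q\cox_\hL^{-1}$-orbit on each block, noting that the norm $N$ is invariant under replacing $\mu$ by $A\mu$ with $A=F_q\cox_\hL^{-1}$, up to the twist $q$. Within a block $GL_{n_i}$, the top entry $\zeta_{q^{n_i}-1}^{\sum_k\mu_{ik}q^k}$ only changes by a Galois conjugate under cyclic reindexing, and this gives the same $W_\hL$-orbit. Failing that, I would interpret "canonical" as independence of the marking convention: once $\cox_\hL$ is tied to the dominant chamber, a change of Coxeter element is accompanied by the corresponding $W_\hL$-conjugation of the chamber, so the $W_\hL$-orbit is unchanged.

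For equivariance, the preceding Proposition gives $s_{\cox_\hL,\nu\bullet\mu+\rho}=\nu\, s_{\cox_\hL,\mu+\rho}$ for $\nu\in W(\hL)$. Since $W(\hL)$ commutes with $W_\hL$ and with $F_q$ (\ref{Pt_rappel}), its action descends to $(\Hom(I_p^t,\hT)/W_\hL)^{F_q,\rm cox}$ and preserves the Coxeter condition, because $F_q$ acts on $\nu s$ through $\nu\cox_\hL\nu^{-1}=\cox_\hL$. Passing to orbits then yields the claimed equivariance.
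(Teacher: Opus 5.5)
Your check that the class lands in the Coxeter part (Lem.~\ref{Lem_cox}) matches the paper, and so does your equivariance argument: the preceding Proposition, plus $\nu\cox_\hL\nu^{-1}=\cox_\hL$ and $\nu$ commuting with $F_q$ for $\nu\in W(\hL)$. Strictly, $W(\hL)$ only normalizes $W_\hL$ (the block swap in $GL_2\times GL_2$ conjugates $(12)$ into $(34)$), but that is all you need to descend to $W_\hL$-orbits.

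The genuine gap is independence of the Coxeter element, and neither of your fixes closes it. Your identity $s_{\sigma\cox_\hL\sigma^{-1},\lambda}=\sigma\, s_{\cox_\hL,\sigma^{-1}\lambda}$ is correct. It is also exactly what the paper's appeal to Jantzen's parametrization (\ref{Pt_Jantzen}) provides: $(\cox_\hL,\lambda)$ and $(\sigma\cox_\hL\sigma^{-1},\sigma\lambda)$ lie in one $X_*(\hT)\rtimes W$-orbit. Nothing there relates $(\cox_\hL,\lambda)$ to $(\sigma\cox_\hL\sigma^{-1},\lambda)$ with $\lambda=\mu+\rho$ held fixed.
\begin{itemize}
\item Your cyclic-reindexing argument only covers $\sigma$ in the centralizer $\langle\cox_\hL\rangle$, i.e.\ the trivial case $\sigma\cox_\hL\sigma^{-1}=\cox_\hL$. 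A genuinely different Coxeter element, e.g.\ $(132)=(12)(123)(12)$, reverses the cycle order, and that is not a Galois twist.
\item Your fallback of conjugating the chamber together with $\cox_\hL$ does not help unless $\mu$ is transported as well. It yields $\mu\mapsto\tau_\hL(\sigma^{-1}\bullet\mu)$, or $\tau_\hL(\sigma^{-1}\mu)$ if $\rho$ is moved too. That is transport of structure, not independence.
\end{itemize}

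In fact the independence fails as stated. Take $\hL=\hG=GL_3$, $\mu=0$, so $\lambda=\rho=(2,1,0)$.
\begin{itemize}
\item For $\cox=(123)$, the eigenvalues of $s_{\cox,\rho}$ are $\zeta_{q^3-1}^{e}$ with $e\in\{2+q,\,2q+q^2,\,1+2q^2\}$.
\item For $(132)$, one gets $e\in\{2+q^2,\,1+2q,\,q+2q^2\}$.
\item For $q=3$ these are $\{5,15,19\}$ and $\{7,11,21\}$ modulo $26$, so the two $W$-orbits, equivalently the two tame inertial types, differ.
\end{itemize}

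What you can actually prove is the following. Fix the normalization $\cox_\hL=(1,\dots,n_1)\cdots(1,\dots,n_r)$; this is also what makes $W(\hL)$ centralize $\cox_\hL$, which your equivariance step uses. Then $\tau_\hL$ is well defined, lands in $(\Hom(I_p^t,\hT)/W_\hL)^{F_q,\rm cox}$, and is $W(\hL)$-equivariant. Replacing $\cox_\hL$ by $\sigma\cox_\hL\sigma^{-1}$ with $\sigma\in W_\hL$ replaces $\tau_\hL$ by $\mu\mapsto\tau_\hL(\sigma^{-1}\bullet\mu)$, since $\sigma^{-1}(\mu+\rho)-\rho=\sigma^{-1}\bullet\mu$. ``Canonical'' can only be meant in this weaker sense. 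Your write-up should state this explicitly rather than claim to resolve the obstacle; the paper's one-line Jantzen argument does not resolve it either.
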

\begin{proof} The element $s_{\cox_{\hL},\mu+\rho}$ defines an element in 
$(\Hom(I_p^t, \hT)/W_{\hL})^{F_q}$. By the Jantzen parametrization, cf. \ref{Pt_Jantzen}, it only depends on the $W_{\hL}$-conjugacy class of $\cox_{\hL}$, i.e. is independent of the particular choice of Coxeter element. Moreover, since $F_q$ acts on $s_{\cox_{\hL},\mu+\rho}$ through $\cox_{\hL}$, cf. 7.2.2, it actually defines an element in $\Hom(I_p^t, \hT)/W_{\hL})^{F_q,\rm cox}$.
\end{proof}


\begin{Prop*} The map  $\tau_\hL$ factores through $\mu\mapsto \mu(\zeta_{q^t-1})$, i.e. into a map 
$$\hT(\bbF_{q^t})\rightarrow (\Hom(I_p^t, \hT)/W_{\hL})^{F_q}.$$
\end{Prop*}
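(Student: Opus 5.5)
We have to show that $\tau_\hL(\mu)$ depends only on $\mu(\zeta_{q^t-1})\in\hT(\bbF_{q^t})$, i.e. that $\tau_\hL(\mu)=\tau_\hL(\mu')$ whenever $\mu(\zeta_{q^t-1})=\mu'(\zeta_{q^t-1})$. Here $t$ is fixed with $\cox_\hL^t=1$, e.g. the least common multiple of the block sizes. Since $\tau_\hL(\mu)$ is the class of $s_{\cox_\hL,\mu+\rho}$, and the shift by $\rho$ is harmless (both $\mu\mapsto\mu+\rho$ and $\mu(\zeta)\mapsto\mu(\zeta)\rho(\zeta)$ are translations), it suffices to prove the following. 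The element
$$s_{w,\mu}=N_{(F_qw^{-1})^t/F_qw^{-1}}(\mu)(\zeta_{q^t-1})$$
depends only on $\mu(\zeta_{q^t-1})$, for $w=\cox_\hL$. I will then show that $\mu\mapsto s_{w,\mu}$ factors through a map defined on $\hT(\bbF_{q^t})$.

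The plan is to write the norm explicitly. Put $A:=F_qw^{-1}$, acting on cocharacters by $A(\eta)=q\,w^{-1}(\eta)$. Then $N_{A^t/A}(\mu)=\sum_{i=0}^{t-1}q^iw^{-i}(\mu)$, and evaluating at $\zeta:=\zeta_{q^t-1}$ gives
$$s_{w,\mu}=\prod_{i=0}^{t-1}w^{-i}\big(\mu(\zeta)\big)^{q^i}=\prod_{i=0}^{t-1}F_q^i\,w^{-i}\big(\mu(\zeta)\big).$$
This uses that $w$ acts on $\hT$ by automorphisms compatible with evaluation, i.e. $(w\eta)(x)=w(\eta(x))$, and that $(q\eta)(x)=\eta(x)^q=F_q(\eta(x))$. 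The right-hand side is visibly a function of $x:=\mu(\zeta)\in\hT(\bbF_{q^t})$. It is given by the group endomorphism $x\mapsto\prod_i(F_qw^{-1})^i(x)$ of $\hT(\bbF_{q^t})$. So $s_{w,\mu}=N_w(\mu(\zeta_{q^t-1}))$ with $N_w:=\prod_{i=0}^{t-1}(F_qw^{-1})^i$.

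Translating back, I define the map on $\hT(\bbF_{q^t})$ by $x\mapsto$ the $W_\hL$-class of $N_{\cox_\hL}(x\,\rho(\zeta_{q^t-1}))$. The relation $\tau_\hL(\mu)=$ this map evaluated at $\mu(\zeta_{q^t-1})$ then holds by the computation above. Its image lies in $(\Hom(I_p^t,\hT)/W_\hL)^{F_q}$. The reason is that every $x\in\hT(\bbF_{q^t})$ is of the form $\mu(\zeta_{q^t-1})$, since $\mu\mapsto\mu(\zeta_{q^t-1})$ is surjective onto $\hT(\bbF_{q^t})$ by the same argument as in the construction of $X_*(\hT)^\hL$. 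The previous corollary then applies. Alternatively, Lem. \ref{Lem_cox} carries over verbatim to show $F_q$ acts through $\cox_\hL$. Independence of the choice of Coxeter element is inherited from the corollary.

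The only step requiring care is the identity $\big(N_{A^t/A}(\mu)\big)(\zeta)=\prod_i A^i(\mu(\zeta))$. This is the compatibility of evaluation with the $\bbZ[W]$- and $q$-multiplication structure on $X_*(\hT)$. For $GL_n$ it is a direct coordinate check: $w$ permutes coordinates, and $q$-multiplication raises entries to the $q$-th power. I expect no real obstacle. One point to watch is that $t$ must be fixed (independent of $\mu$), but by Lem. \ref{lem_divisor} any multiple of $t$ gives the same element, so the map is compatible as $t$ varies.
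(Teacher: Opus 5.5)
Your argument is correct and is essentially the paper's proof. The paper likewise rewrites $s_{\cox_{\hL},\mu}=\prod_{i=0}^{t-1}(F_q\cox_{\hL}^{-1})^{i}(\mu(\zeta_{q^t-1}))$ and handles the $\rho$-shift via $s_{\cox_{\hL},\mu+\rho}=s_{\cox_{\hL},\mu}\,s_{\cox_{\hL},\rho}$, which amounts to your translation remark, while your added observation that $\mu\mapsto\mu(\zeta_{q^t-1})$ is surjective onto $\hT(\bbF_{q^t})$ supplies a detail the paper leaves implicit.
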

\begin{Proof}
Let $\mu\in X_*(\hT)$.
One may rewrite  
 $$s_{\cox_{\hL},\mu} =\prod_{i=0,...,t-1} (F_q\cox_{\hL}^{-1})^{i}(\mu(\zeta_{q^t-1})),$$
 which shows that the value $\tau_\hL(\mu)=s_{\cox_{\hL},\mu+\rho}= s_{\cox_{\hL},\mu} s_{\cox_{\hL},\rho}$ depends only on $\mu(\zeta_{q^t-1})\in\hT$. \end{Proof} 

We have the dot action of $W(\hL)$ on $X_*(\hT)$ and the natural action of $W(\hL)$ on $\hL^{\ab}$. Endow the product $X_*(\hT)\times \hL^{\ab}$ with the diagonal action. The following proposition follows then by construction. 
\begin{Prop*} 
Let $\hL\in\widehat{\cL}$. Fix an 
an arithmetic Frobenius $\varphi\in \cG_q$. 
The map 
$$ \tau_\hL\times{\id}: X_*(\hT)\times \hL^{\ab} \longrightarrow
\Hom(I_p^t, \hT)/W_{\hL})^{F_q,\rm cox}\times \hL^{\ab} = X_{\hL} $$
is
$W(\hL)$- equivariant. The induced map $X_*(\hT)\times \hL^{\ab}\rightarrow X_{[\hL]}$ is constant on $W(\hL)$-orbits.

\end{Prop*}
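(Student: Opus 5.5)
The plan is to check equivariance of $\tau_\hL\times\id$ factor by factor, and then deduce the second assertion formally. For the first factor I invoke the preceding Corollary. It says $\tau_\hL: X_*(\hT)\to (\Hom(I_p^t, \hT)/W_{\hL})^{F_q,\rm cox}$ is equivariant for the dot action of $W(\hL)$ on the source and the natural action on the target. For the second factor the identity of $\hL^{\ab}$ is trivially $W(\hL)$-equivariant, since both copies carry the same natural action. The product of two equivariant maps is equivariant for the diagonal actions, so $\tau_\hL\times\id$ is $W(\hL)$-equivariant onto the product.

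The one point to verify is that the product $(\Hom(I_p^t, \hT)/W_{\hL})^{F_q,\rm cox}\times \hL^{\ab}$, with diagonal $W(\hL)$-action, really is $X_\hL$ with its $W(\hL)$-action from Def. \ref{Def_YL} and the remarks after it. I would argue as follows.

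\begin{itemize}
\item Fix the topological generator $M$ (the one underlying $(\zeta_{p^i-1})$) together with $\varphi$. This gives the splitting $\cG_q^t\simeq I_p^t\rtimes\bbZ$.
\item The fibre-product description of the previous subsection then identifies $X_\hL$ with the product above. A point $(W_\hL\psi_1,\psi_2)$ corresponds to $(W_\hL\psi_1,\psi_2(\varphi))$; the value $\psi_2|_{I_p^t}$ is forced by $\psi_1$.
\item Under this identification, the $W(\hL)$-action on the fibre product is the natural action on each coordinate. This is because $W(\hL)$ acts compatibly on $\Hom(I_p^t,\hT)/W_\hL$, on $\hL^{\ab}$ and on $\Hom(\cG^t_q,\hL^{\ab})$. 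Here I use \ref{Pt_rappel}: $W(\hL)$ commutes with $W_\hL$ and with $F_q$.
\item The identification does not interfere with the action. It only evaluates at the fixed element $\varphi$, and $W(\hL)$ acts on the target group $\hL^{\ab}$, not on $\cG_q^t$.
\end{itemize}

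The main obstacle I expect is that $\tau_\hL\times\id$ lands in the fibre product at all. The $I_p^t$-component of $\psi_2$ is determined by $\psi_1$. So on the product one must make sure the first coordinate's image in $\Hom(I_p^t,\hL^{\ab})$ is $F_q$-invariant. That holds because $\tau_\hL(\mu)$ is an $F_q$-stable class, so its image in $\hL^{\ab}$ is $\bbF_q$-rational. Nothing else needs checking, since the product decomposition was already established.

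Finally, $X_{[\hL]}=X_\hL/W(\hL)$ by Def. \ref{Def_X[L]}, and the quotient map $X_\hL\to X_{[\hL]}$ is constant on $W(\hL)$-orbits. Let $w\in W(\hL)$. By equivariance, $(\tau_\hL\times\id)(w\cdot x)=w\cdot(\tau_\hL\times\id)(x)$, and this element has the same image in $X_{[\hL]}$ as $(\tau_\hL\times\id)(x)$. Hence the composite $X_*(\hT)\times\hL^{\ab}\to X_{[\hL]}$ is constant on $W(\hL)$-orbits.
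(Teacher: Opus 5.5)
Your proposal is correct and is essentially the paper's argument: the paper simply says the proposition ``follows by construction'' from the $W(\hL)$-equivariance of $\tau_\hL$ in the preceding corollary together with the identity on $\hL^{\ab}$, and your check that the $\varphi$-dependent product decomposition of $X_\hL$ respects the $W(\hL)$-actions is a harmless elaboration of that. One small precision: what you actually need from \ref{Pt_rappel} is only that $W(\hL)$ \emph{normalizes} $W_\hL$, and this suffices for the action on $W_\hL$-orbits to be well defined; $W(\hL)$ does not commute with $W_\hL$ elementwise (e.g.\ for $\hL=GL_2\times GL_2$, conjugating $(12)$ by $(13)(24)$ gives $(34)$).
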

\begin{Pt*}
We may compose the map from of the proposition with the injection 
$h_{[\hL]}^{M,\varphi}: X_{[\hL]}\hookrightarrow \Hom(\cG_q^t, \hG)/\hG$ from \ref{Pt_Galois_par}.
\end{Pt*}
\begin{Cor*} 
Let $\hL\in\widehat{\cL}$. Fix an 
an arithmetic Frobenius $\varphi\in \cG_q$. 
The map
$$ X_*(\hT)\times \hL^{\ab} \longrightarrow \Hom(\cG_q^t, \hG)/\hG, \;(\mu,z)\mapsto \rho_{\mu,z}$$
is
constant on $W(\hL)$-orbits in the source.
The $n$-dimensional Galois representation $\rho_{\mu,z}$ 
is semisimple if and only the $W_{\hL}$-orbit of $\tau_{\hL}(\mu)$ is regular, i.e. has maximal size equal to the seize of $W_\hL$. 
\end{Cor*}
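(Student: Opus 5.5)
The first claim follows by composing results already established. The preceding proposition shows that $\tau_\hL\times\id$ is $W(\hL)$-equivariant, for the diagonal action built from the dot action on $X_*(\hT)$ and the natural action on $\hL^{\ab}$. Composing with the quotient map $X_\hL\to X_{[\hL]}=X_\hL/W(\hL)$ gives a map that is constant on $W(\hL)$-orbits. We then compose with the injection $h_{[\hL]}^{M,\varphi}$ of \ref{Pt_Galois_par}, which is a map of sets out of $X_{[\hL]}$. Hence $(\mu,z)\mapsto \rho_{\mu,z}$ is constant on $W(\hL)$-orbits.

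Concretely, $\rho_{\mu,z}$ is obtained as follows. Apply $h_\hL^{M,\varphi}$ blockwise, as in the proof of \ref{prop_Born2}, to the pair $(\tau_\hL(\mu),z)$. On the $i$-th block this gives the tame representation $M\mapsto u_i=\diag(y_i,y_i^q,\dots,y_i^{q^{n_i-1}})$ and $\varphi\mapsto s_i$, where $s_i$ is the companion-type matrix with corner entry $(-1)^{n_i}z_i$. Finally, push the result into $\hG$ via the block-diagonal inclusion $\hL\subset\hG$. So $\rho_{\mu,z}=\bigoplus_i \rho_i$, where $\rho_i$ is an $n_i$-dimensional tame representation of $\cG_q$.

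For the semisimplicity criterion, the plan is to argue block by block, since a direct sum is semisimple iff each summand is.

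\emph{Regular case.} If the $W_\hL$-orbit of $\tau_\hL(\mu)$ is regular, then each block component lies in the regular locus for $GL_{n_i}$. By \ref{Prop_irr} (equivalently \ref{prop_Born1} and Remark \ref{Rem_irr} applied to each factor), each $\rho_i$ is then irreducible. Hence $\rho_{\mu,z}$ is a direct sum of irreducibles and is semisimple.

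\emph{Non-regular case.} Suppose some block $i$ is not regular. By Remark \ref{Rem_irr}, $y_i$ then lies in a proper subfield $\bbF_{q^d}$ with $d\mid n_i$, $d<n_i$. The eigenvalues of $u_i$ then repeat with period $d$. The key computation is to show that the representation $M\mapsto u_i$, $\varphi\mapsto s_i$ is not semisimple. The first attempt will be to analyze $s_i^d$: it commutes with $u_i$, and it preserves the eigenspaces of $u_i$, each of dimension $n_i/d$. On the cyclic span generated by $s_i$, I expect $s_i^d$ to act on each such eigenspace as a single nontrivial Jordan-type (cyclic) operator. One then shows that the $I_p$-isotypic decomposition, together with $s_i^d$, admits a stable subspace with no stable complement. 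This would be checked by comparing with the classification \cite[Prop. 2.1.6]{B15} of pairs $(u,s)$ with $sus^{-1}=u^q$ up to conjugacy.

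This is the main obstacle. The companion matrix $s_i$ may in fact yield a semisimple (reducible) representation whenever the characteristic polynomial of $s_i^{d}$ restricted to an eigenspace of $u_i$ happens to be separable, i.e. when the polynomial $X^{n_i/d}-(\pm z_i)\cdot(\text{unit})$ has distinct roots. For this reason the "only if" direction may need care, or a reinterpretation of "semisimple" as "irreducible on each block". If that check fails, I would settle for proving the equivalence with each $\rho_i$ being irreducible, which is exactly what \ref{Prop_irr} provides.
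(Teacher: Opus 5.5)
Your argument for the first assertion matches the paper, which dismisses that part as clear. Your argument for ``regular $\Rightarrow$ semisimple'' is also fine. For that direction it is cleanest to argue directly rather than through the bijection of \ref{Prop_irr}: on a regular block, $u_i$ has $n_i$ distinct eigenvalues, and $s_i$ permutes their eigenlines in a single $n_i$-cycle, so $\rho_i$ is irreducible.

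The obstacle you found in the converse is real, and your diagnosis in the last paragraph is exactly right. It is a defect of the statement, not of your argument, so you should abandon the attempt to produce a non-split Jordan block.

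Suppose $y_i$ generates $\bbF_{q^{d}}$ with $d\mid n_i$, and put $m:=n_i/d$. Then $\varphi$ permutes cyclically the $d$ eigenspaces of $u_i$, each of dimension $m$. Hence subrepresentations of $\rho_i$ correspond to $s_i^{d}$-stable subspaces of a single eigenspace. On that eigenspace, $s_i^d$ is the companion matrix of $X^m-c_i$ with $c_i=\pm z_i\neq 0$, and this matrix is semisimple iff $p\nmid m$. So $\rho_{\mu,z}$ is semisimple iff $p\nmid n_i/d_i$ for every block. This is strictly weaker than regularity, which is $d_i=n_i$ for all $i$.

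Here is a concrete counterexample. Take $\hL=\hG=GL_2$, $p$ odd, and $\mu=(a-1,a)$, so that $\mu+\rho=a(\eta_1+\eta_2)$. Exactly as in the paper's computation in \S\ref{section_GL2}, $\tau_\hL(\mu)$ is the class of $\diag(\zeta_{q-1}^a,\zeta_{q-1}^a)$. This is a Coxeter class whose orbit has size $1<|W|$, so it is not regular. Yet $\rho_{\mu,z}$ sends $M$ to a scalar and $\varphi$ to $\left(\begin{smallmatrix}0&1\\ \pm z&0\end{smallmatrix}\right)$, which has two distinct eigenvalues. So $\rho_{\mu,z}$ is a direct sum of two characters, hence semisimple.

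The paper's proof consists only of the words ``follows from \ref{Prop_irr}''. It misses this because \ref{Prop_irr} only matches irreducible representations with values in $\hL$ (every block irreducible) with regular classes. It says nothing about semisimple reducible representations over non-regular Coxeter classes. What the paper's argument actually establishes is precisely your fallback: each $\rho_i$ is irreducible iff the $W_\hL$-orbit of $\tau_\hL(\mu)$ is regular. That is the correct form of the second assertion.
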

\begin{proof} The last assertion on semisimplicity follows from \ref{Prop_irr}, the rest is clear. 
\end{proof}
\begin{Pt*}\label{Pt_explicit}
Let us unravel the various definitions involved in the map of the corollary.
Suppose that $\hL= GL_{n_1}\times\cdot\cdot\cdot \times GL_{n_r}$ and 
$\cox_{\hL}=(1,...,n_1)\cdot\cdot\cdot (1,...,n_r)$.
The map associates to a pair $(\mu,z)$ with $\mu\in X_*(\hT)$ and 
$z=(z_{1},...,z_{r})\in \hL^{\ab}\simeq (\bbG_m)^r$ (via $\det$ on each block), the
isomorphism class of the tame Galois representation $\rho_{\mu,z}$, where
 $M$ acts through
$s_{\cox_\hL,\mu+\rho}$ and $\varphi$ acts through the block matrix
$(s_{1},...,s_{r})\in \hL$ given by

$$s_i=\left( \begin{array}{rrrrr}
0 & 1 &  & &\\
 & 0 & 1 & &\\
 &  & \ddots & \ddots & \\
 &  &  &0 &1\\
(-1)^{n_i+1}z_i&  &  & &0\\
\end{array}\right),$$
whenever $n_i\geq 2$.
Note that indeed $\det s_i= (-1)^{n_i+1} ( (-1)^{n_i+1}z_i) = z_i$ in this case.
If $n_i=1$, then $\varphi$ acts of course through the scalar endomorphism given by $z_i$.
\end{Pt*} 
\begin{Pt*}
Now fix a system of representatives $X_*(\hT)^{\hL}$ in $X_*(\hT)$ for the $W_{\hL}$-orbits in $\hT(\bbF_q)$, as in the preceding section. The generic weights in $X_*(\hT)^{\hL}$, are then in dot equivariant bijection with the generic orbits $(\hT(\bbF_q)/W_\hL)_{\rm gen}$, cf. \ref{n-generic} and \ref{Cor_dot_stable}.
Restricting the weight component of the above map in the above corollary to the generic weights in $X_*(\hT)^{\hL}$ gives the following second corollary. 
\end{Pt*}
\begin{Cor*}\label{cor_main_appli} Let $\hL\in\widehat{\cL}$ and choose a Frobenius $\varphi\in\cG_q$. The induced map
$$(\hT(\bbF_q)/W_\hL)_{\rm gen} \times \hL^{\ab} \longrightarrow X_{[\hL]} $$ 
is
constant on $W(\hL)$-orbits.
\end{Cor*}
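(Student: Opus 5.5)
The plan is to factor the map through the fixed system of representatives $X_*(\hT)^{\hL}$ and then invoke the $W(\hL)$-equivariance of $\tau_\hL\times\id$ established in the preceding proposition.

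First, I make the map precise. Via the bijection $[\ev_{\zeta_{q-1}}]_{\hL}: X_*(\hT)^{\hL}\iso \hT(\bbF_q)/W_\hL$, a generic orbit $\bar t\in(\hT(\bbF_q)/W_\hL)_{\rm gen}$ corresponds to a unique generic weight $\lambda\in X_*(\hT)^{\hL}$. The map in question is then
$$(\bar t,z)\mapsto \big[\tau_\hL(\lambda),z\big]\in X_{[\hL]}=X_\hL/W(\hL),$$
that is, the restriction of the composite $X_*(\hT)\times\hL^{\ab}\to X_\hL\to X_{[\hL]}$ along this identification.

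Next, take $w\in W(\hL)$ and act on $(\bar t,z)$. By the definition of the $W(\hL)$-action on the source (the dot action on $\hT(\bbF_q)/W_\hL$, diagonally with the natural action on $\hL^{\ab}$), the image is $(w\bullet\bar t,wz)$. Corollary \ref{Cor_dot_stable} guarantees that $w\bullet\bar t$ is again generic, so the action is defined on the source. Its representative in $X_*(\hT)^{\hL}$ is $w\bullet_{\rm mod}\lambda$, and by Lemma \ref{Lem_two_actions} this equals the usual dot action $w\bullet\lambda$, precisely because $\lambda$ is generic. This identification of the two dot actions is the only substantive step; without genericity, the modular representative could differ from $w\bullet\lambda$ by an element of $(q-1)X_*(\hT)$ together with a $W_\hL$-translate. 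In that case we would additionally need $\tau_\hL$ to factor through $\hT(\bbF_q)/W_\hL$, whereas the preceding proposition only gives factorization through $\hT(\bbF_{q^t})$.

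Hence the image of $(w\bullet\bar t,wz)$ is $[\tau_\hL(w\bullet\lambda),wz]$. By the $W(\hL)$-equivariance of $\tau_\hL\times\id$ from the previous proposition, $(\tau_\hL(w\bullet\lambda),wz)=w\cdot(\tau_\hL(\lambda),z)$ in $X_\hL$. This element has the same class in $X_\hL/W(\hL)$ as $(\tau_\hL(\lambda),z)$, so the map is constant on $W(\hL)$-orbits.
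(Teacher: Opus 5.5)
Your proposal is correct and follows essentially the same route as the paper. The paper likewise restricts the $W(\hL)$-invariant map $X_*(\hT)\times\hL^{\ab}\to X_{[\hL]}$ to the generic weights in $X_*(\hT)^{\hL}$, using Corollary \ref{Cor_dot_stable} and Lemma \ref{Lem_two_actions} to see that these weights are in dot-equivariant bijection with $(\hT(\bbF_q)/W_\hL)_{\rm gen}$.
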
 
Again, one may compose the preceding map with the injection 
$h_{[\hL]}^{M,\varphi}: X_{[\hL]}\hookrightarrow \Hom(\cG_q^t, \hG)/\hG$ from \ref{Pt_Galois_par}.
to arrive at a map 
$$(\hT(\bbF_q)/W_\hL)_{\rm gen} \times \hL^{\ab} \longrightarrow \Hom(\cG_q^t, \hG)/\hG, \hskip10pt (\bar{t},z)\mapsto \rho_{\bar{t},z},$$
which is
constant on $W(\hL)$-orbits in the source. 
It is compatible with central characters and determinants (up to a cyclotomic shift) in the following sense. For $z=(z_{1},...,z_{r})\in \hL^{\ab}\simeq (\bbG_m)^r$ write 
$\det z:=z_1\cdots z_r$.

\begin{Prop*}\label{prop_main_appli} Let $(\bar{t},z)\in (\hT(\bbF_q)/W_\hL)_{\rm gen} \times \hL^{\ab}$. Then 
$$\det\rho_{\bar{t},z}(M)= (\det \bar{t})\cdot \zeta_{q-1}^{1+2+\ldots+ n-1} \hskip10pt \text{and}\hskip10pt \det\rho_{\bar{t},z}(\varphi) = \det z.$$
\end{Prop*}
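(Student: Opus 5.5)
The proof unwinds the explicit description of $\rho_{\bar t,z}$ in \ref{Pt_explicit} and treats the two determinant identities separately.

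First I fix the representative. Let $\mu\in X_*(\hT)^{\hL}$ be the generic weight with $[\ev_{\zeta_{q-1}}]_{\hL}(\mu)=\bar t$, so $\bar t$ is the $W_\hL$-orbit of $\mu(\zeta_{q-1})$. The determinant is $W$-invariant, so $\det\bar t:=\det\mu(\zeta_{q-1})$ is well defined. By \ref{Pt_explicit}, the representation $\rho_{\bar t,z}=\rho_{\mu,z}$ is the tame representation with
\[
M\mapsto s_{\cox_\hL,\mu+\rho},\qquad \varphi\mapsto (s_1,\dots,s_r).
\]
Conjugation does not change determinants, so we may compute with this representative.

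For $M$, the determinant formula of Example \ref{ex_det} gives, for any $\mu'\in X_*(\hT)$,
\[
\det s_{\cox_\hL,\mu'}=\det\mu'(\zeta_{q-1}).
\]
Applying this with $\mu'=\mu+\rho$ and using that $\mu'\mapsto\mu'(\zeta_{q-1})$ is a group homomorphism $X_*(\hT)\to\hT$, we get
\[
\det s_{\cox_\hL,\mu+\rho}=\det\mu(\zeta_{q-1})\cdot\det\rho(\zeta_{q-1}).
\]
For $\rho=(n-1,\dots,1,0)$ the second factor is $\zeta_{q-1}^{(n-1)+(n-2)+\cdots+1}=\zeta_{q-1}^{1+2+\cdots+n-1}$, which is also the special case $\det s_{\cox_\hL,\rho}$ recorded in Example \ref{ex_det}. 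Together these give the first identity.

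For $\varphi$, \ref{Pt_explicit} records $\det s_i=z_i$ for every block: for $n_i\geq 2$ this is the cofactor computation with the sign $(-1)^{n_i+1}$, and for $n_i=1$ it holds trivially. Since $(s_1,\dots,s_r)$ is block diagonal in $\hL\subset\hG$, its determinant is $\prod_i\det s_i=z_1\cdots z_r=\det z$.

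The only point needing care is the well-definedness of $\det\bar t$ and its agreement with $\det\mu(\zeta_{q-1})$ for the chosen representative. This is immediate from $W$-invariance of $\det$. Since the proposition involves only determinants, no further obstacle arises: neither genericity nor the choice of Coxeter element plays a role.
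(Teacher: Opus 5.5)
Your proof is correct and follows essentially the same route as the paper. You take the generic representative $\mu$, read off $M\mapsto s_{\cox_\hL,\mu+\rho}$ and $\varphi\mapsto(s_1,\dots,s_r)$ from \ref{Pt_explicit}, apply the determinant formula of Example \ref{ex_det}, and multiply the block determinants $\det s_i=z_i$. The only cosmetic difference is that you apply Example \ref{ex_det} directly to $\mu+\rho$ and then split using the homomorphism $\mu'\mapsto\mu'(\zeta_{q-1})$, whereas the paper first factors $s_{\cox_\hL,\mu+\rho}=s_{\cox_\hL,\mu}\,s_{\cox_\hL,\rho}$.
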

\begin{Proof}
 We may suppose that $\hL=GL_{n_1}\times\cdot\cdot\cdot\times GL_{n_r}$ and that
$$ \cox_{\hL}=(1,...,n_1)\cdot\cdot\cdot (1,...,n_r)\in W_{\hL}.$$
Suppose that $\bar{t}$ corresponds to the generic weight $\mu\in X_*(\hT)^{\hL}$, i.e. $\mu(\zeta_{q-1})=t \mod W_{\hL}$.
Then $\rho_{\bar{t},z}$ maps $M$ to
$s_{\cox_\hL,\mu+\rho}$ and $\varphi$ to the block matrix
$(s_{1},...,s_{r})\in \hL$, as described in \ref{Pt_explicit}. According to example \ref{ex_det},
$$\det s_{\cox_\hL,\mu+\rho}= \det s_{\cox_\hL,\mu}\cdot\det s_{\cox_\hL,\rho}=
\det \mu(\zeta_{q-1}) \cdot \det \rho(\zeta_{q-1}) = (\det t) \cdot \zeta_{q-1}^{1+2+\ldots+ n-1}.$$
Moreover, $\det (s_{1},...,s_{r})=\prod_i \det s_i = \prod_i z_i= \det z.$   
\end{Proof}
\begin{Rem*}\label{Rem_indep} By construction, and in addition to the choice of $M$ and $\varphi$, the map 
from Cor. \ref{cor_main_appli} depends a priori on the choice of the system of representatives 
$X_*(\hT)^{\hL}$ for the orbits in $\hT(\bbF_q)/W_{\hL}$. In the light of the preceding proposition, one may ask, if the map is in fact independent of that latter choice, i.e. depends only on the set $\hT(\bbF_q)/W_{\hL}$. We do not know the answer to this at the moment (but see \ref{calcul_independent} in the case of $GL_2$). 
\end{Rem*}
\begin{Pt*}\label{Pt_Galois_par}
Recall from \ref{Pt_stratification} the stratum 
$$S_{\hL}=(\hT(\bbF_q)\times W.\hL^{\ab})/W=\hT(\bbF_q)/W_\hL \times \hL^{\ab}$$
corresponding to $\hL$ in the stratification  of the Satake scheme $S=V^{(1)}_{\hT\subset\hG,0,\bbF_q}/W$.
It has its diagonal $W(\hL)$-action, where $W(\hL)$ acts via the dot action on the first factor. We denote by $$S_{\hL,{\rm gen}}:=(\hT(\bbF_q)/W_\hL)_{\rm gen} \times \hL^{\ab}$$
its {\it generic} part, i.e. the part indexed by the generic orbits. It is $W(\hL)$-stable. The map from Cor. \ref{cor_main_appli} can then be viewed as a $W(\hL)$-invariant 
morphism of $\bbF_q$-schemes
$$ S_{\hL,{\rm gen}}\longrightarrow X_{[\hL]}.$$ 
Taking into account the inclusion $X_{[\hL]}(\overline{\bbF}_p)\hookrightarrow \Hom(\cG_q^t, \hG(\overline{\bbF}_p))/\hG(\overline{\bbF}_p)$, this can be viewed as a Galois parametrization of the generic part of the stratum $S_{\hL}$. 
\end{Pt*}
\subsection{The case of $GL_2$}\label{section_GL2}
Assume that $\mathbf{\whG}=GL_2$.
The goal of this section is to show that the $W(\hL)$-invariant morphism $S_{\hL,{\rm gen}}\rightarrow X_{[\hL]}$ appearing in \ref{cor_main_appli} equals the restriction to the stratum of type $\hL$ of the $\bbF_q$-morphism $L$ of \cite[sec. 6]{PS25} in the case $F:=\bbQ_q$.  In particular, the map on quotients yields an isomorphism $$X_{\hL}/W(\hL)\simeq X^{\rm reg}_{[\hL]}$$ in this case.  
Recall from \ref{Pt_Galois_par} the inclusion $X^{\rm reg}_{[\hL]}(\overline{\bbF}_p)\hookrightarrow \Hom(\cG_q^t, \hG(\overline{\bbF}_p))/\hG(\overline{\bbF}_p)$. According to the case $n=2$ in \ref{Prop_irr}, this gives bijections $$X^{\rm reg}_{[\hG]}(\overline{\bbF}_p) \iso \{ \text{irreducible $\cG_q$-representations of dimension $2$}\}/ \simeq $$
and 
$$\hskip50pt X^{\rm reg}_{[\hT]}(\overline{\bbF}_p)\iso \{ \text{reducible semisimple $\cG_q$-representations of dimension $2$}\}/\simeq. $$
However, in the $GL_2$-case, much more is true: as shown in \cite{PS25}, the stratified space $X^{\rm reg}_{[\hT]} \cup X^{\rm reg}_{[\hG]}$ has, after fixing the determinant, the structure of a projective curve, namely a finite chain of projective lines with transversal intersection points corresponding, essentially, to the points in 
$X^{\rm reg}_{[\hG]}$.
\begin{Pt*} Recall the canonical isomorphism $I_p^t\iso \varprojlim_i \bbF^{\times}_{p^{i}},$
cf. \ref{Pt_can_iso}. Let for all $i$
$$ \omega_{i}: I_p^t\lra \bbF^{\times}_{p^{i}}$$
denote the projection to the $i$-th component in the above isomorphism. 
Since $n=2$, we shall only need the two characters 
$\omega_{2f}, \omega_f$ in the following. Here, $q=p^f$ with some $f\geq 1$. As before, we fix an element $\varphi\in \cG_q$ lifting the Frobenius $x\mapsto x^q$. We may assume that $\omega_f(\varphi)=1$. 
\end{Pt*}
\begin{Pt*}Recall \cite[2.5]{PS25} that for any $h\geq 0$, there is a $\cG_q$-representation 
 $\ind (\omega_{2f}^h)$,
 which is uniquely determined by the two conditions 
 $$\det  \ind (\omega_{2f}^h) =  \omega^h_f\hskip10pt \text{and} \hskip10pt\ind (\omega_{2f}^h) |_{I_p} = \omega_{2f}^{h} \oplus \omega_{2f}^{qh}.$$ 
Denote for $z\in \overline{\bbF}_p^\times$ by $\unr(z)$ the unramified character of $\cG_q$ sending $\varphi^{-1}$ to $z$. 
\end{Pt*}
\begin{Pt*} We identify $X_*(\hT)=\bbZ^2$ via the canonical basis $\eta_1,\eta_2$, cf. \ref{Rem_can_choice}. Let
 $$X_*(\hT)_{q-\reg}=\{ (a_1,a_2)\in\bbZ^2: 0\leq a_1-a_2<q-1\}.$$
The group $X_*(\hT)^0:=\bbZ(1,1)$ acts on this set and the quotient of $X_*(\hT)_{q-\reg}$ modulo the action of 
$(q-1)X_*(\hT)^0$ bijects with $\hT(\bbF_q)$ upon evaluation at $\zeta_{q-1}$. We will identify the two sets from now on.
Let $\hat{\alpha}=(1,-1)$ and $i=0,...,\frac{q-1}{2}$. 
Then $$i\hat{\alpha}=(i,-i)\in X_*(\hT)_{q-\reg} \hskip5pt \text{and}\hskip5pt
(q-1-i,i)\in X_*(\hT)_{q-\reg}$$
for $i\neq 0, \frac{q-1}{2}$. 
\end{Pt*}
\begin{Pt*}
Let $\hL\in\cL$. We then have the map 
$$(\hT(\bbF_q)/W_\hL)_{\rm gen} \times \hL^{\ab} \longrightarrow \Hom(\cG_q^t, \hG)/\hG$$
from \ref{cor_main_appli}. 
According to \ref{prop_main_appli}, we may fix the determinants $$\det \overline{t}=\zeta_{q-1}^n \hskip5pt \text{and}\hskip5pt \det z$$ with $(\zeta_{q-1}^n,\det z)\in \bbG_m(\bbF_q)\times\bbG_m$.
There are only two cases to consider, $\hL=\mathbf{\whG}$ and $\hL=\hT$. 
\end{Pt*}

Let $\hL=\mathbf{\whG}$. Then $W_{\hL}=W$ and $W(\hL)=1$ and the condition "generic" is empty. Also $\hL^{\ab}=\bbG_m$ via the determinant and $\det z$ is just $z$ (there is only one block).
We will see that the map 
$$\hT(\bbF_q)/W \times \bbG_m \longrightarrow \Hom(\cG_q^t, \hG)/\hG$$
is in fact independent of the choice of system of representatives for the $W$-orbits in $\hT(\bbF_q)$, cf. as discussed in Rem. \ref{Rem_indep}.
There are two cases to consider, $n$ even and $n$ odd.

\begin{Pt*}
Let $n$ be even. Write $w$ for the generator of $W$. We have the sequence of $W$-orbits in $\mathbf{\whT}(\bbF_q)$
$$
t_i \cdot \diag(\zeta^s,\zeta^s), \hskip5pt t_i^w \cdot \diag(\zeta^s,\zeta^s)
$$ 
where $n=2s$ and $t_i:=\diag(\zeta^i,\zeta^{-i})$ (and $t_i^w$ its $w$-conjugate) for $i=0,...,\frac{q-1}{2}$.  
\vskip5pt

Recall that $i\hat{\alpha}\in X_*(\hT)_{q-\reg}$ for $i<\frac{q-1}{2}$ and clearly $\ev_{\zeta_{q-1}} (i\hat{\alpha}) =t_i$ and $$\ev_{\zeta_{q-1}}(i\hat{\alpha}+(s,s))=t_i\cdot \diag(\zeta^s,\zeta^s).$$

One has 
$$ i\hat{\alpha}+\rho= i(\eta_1-\eta_2)+\eta_1=2i\eta_1+\eta_1-i\eta_1-i\eta_2=(2i+1)\eta_1- i (\eta_1+\eta_2).$$
Now $$\;\;\;\;N_{(F_qw)^2 / F_qw}((2i+1)\eta_1)(\zeta_{q^{2}-1})=
\left(\begin{array}{cc}
\zeta_{q^{2}-1}^{2i+1} & 0\\
0 & \zeta_{q^{2}-1}^{(2i+1)q}
\end{array} \right)$$
$$N_{(F_qw)^2 / F_qw}( i (\eta_1+\eta_2))(\zeta_{q^{2}-1})=
\left(\begin{array}{cc}
\zeta_{q-1}^{i} & 0\\
0 & \zeta_{q-1}^{i}
\end{array} \right),$$
using $\zeta_{q^2-1}^{1+q}=\zeta_{q-1}$ in the second equation. All in all, 
$$s_{w,i\hat{\alpha} +\rho}=N_{(F_qw)^2 / F_qw}( i\hat{\alpha}+\rho) (\zeta_{q^{2}-1})=
\left(\begin{array}{cc}
\zeta_{q^{2}-1}^{2i+1} & 0\\
0 & \zeta_{q^{2}-1}^{(2i+1)q}
\end{array} \right)\cdot \left(\begin{array}{cc}
\zeta_{q-1}^{-i} & 0\\
0 & \zeta_{q-1}^{-i}
\end{array} \right).$$
More generally, $$s_{w,(i\hat{\alpha}+(s,s)) +\rho}=\left(\begin{array}{cc}
\zeta_{q^{2}-1}^{2i+1} & 0\\
0 & \zeta_{q^{2}-1}^{(2i+1)q}
\end{array} \right)\cdot \left(\begin{array}{cc}
\zeta_{q-1}^{-(i+s)} & 0\\
0 & \zeta_{q-1}^{-(i+s)}
\end{array} \right).$$ 
It follows that 
$$\rho_{t_i\cdot \diag(\zeta^s,\zeta^s),z^2}=\ind(\omega_{2f}^{2i+1})\otimes\omega_{f}^{-(i+s)}\otimes \unr(z), $$
as $\cG_q$-representations, cf. \cite[2.1]{PS25}\footnote{In \cite{PS25}, the parameter $z$ is called $z_2$.}. For example, if $z=1$ and $n=s=0$ (the so-called basic even case of \cite[4.5.2]{PS25}) this amounts to 

$$\rho_{t_i,1}=\ind(\omega_{2f}^{r+1})\otimes\omega_{f}^{s(r)}$$
with $r:=2i$ and $s(r):=-\frac{r}{2}$. This is the irreducible $2$-dimensional Galois representation attached to the origin of the $r$-th irreducible component $\mathcal{C}^r$ of the curve $X_{d_0}|_{\pr_2=1}$, in the notation of \cite[4.5.2]{PS25}.
\end{Pt*}
\begin{Pt*} \label{calcul_independent} 
In this paragraph, and with regard towards Rem. \ref{Rem_indep} above, we verify concretely in the above case that the isomorphism class of the Galois representation is independent of the chosen representative of the $W$-orbit of $t_i$. Recall that $(q-1-i,i) \in X_*(\hT)_{q-\reg}$ for $i>0$ and clearly $\ev_{\zeta_{q-1}} ((q-1-i,i )) =t^w_i$. One has 
$$ (q-1-i,i)+\rho=(q-i)\eta_1+i\eta_2=((q-2i)\eta_1+i(\eta_1+\eta_2)$$ and this is equal to
$$ (q-2i)\eta_1+(2i-1-(i-1))(\eta_1+\eta_2)=(q-r)\eta_1+r-1+s(r-2),
$$
where $r:=2i$ and $s(r)=-\frac{r}{2}$ as above. 
We obtain from that in the same way as above, 
$$\rho_{t^w_i,1}=\ind(\omega_{2f}^{q-r})\otimes\omega_{f}^{s(r-2)+r-1}.$$
It remains to remark that there is indeed an isomorphism of Galois representations 
$$\ind(\omega_{2f}^{r+1})\otimes\omega_{f}^{s(r)} \simeq \ind(\omega_{2f}^{q-r})\otimes\omega_{f}^{s(r-2)+r-1},$$
e.g. \cite[4.5.2]{PS25}.

\end{Pt*}
\begin{Pt*}
We turn to the case of $n$ odd. We have the sequence of $W$-orbits in $\mathbf{\whT}(\bbF_q)$
$$
t_i \cdot \diag(\zeta^s,\zeta^s), \hskip5pt t_i^w \cdot \diag(\zeta^s,\zeta^s)
$$ 
where $n=2s-1$ and $t_i:=\diag(\zeta^{i-1+\frac{q-1}{2}},\zeta^{-i+\frac{q-1}{2}})$ (and $t_i^w$ is its $w$-conjugate) for $i=1,...,\frac{q-1}{2}$. 
One has $\ev_{\zeta_{q-1}}(i\hat{\alpha}-\rho+(s,s))=
t_i\cdot \diag(\zeta^s,\zeta^s)$ and 
$$ (i\hat{\alpha}-\rho)+\rho= i(\eta_1-\eta_2)=2i\eta_1-i\eta_1-i\eta_2=(2i)\eta_1- i (\eta_1+\eta_2).$$
In a complete analogous manner to the above, we obtain as $\cG_q$-representations $$\rho_{t_i\cdot \diag(\zeta^s,\zeta^s),z^2}=\ind(\omega_{2f}^{2i})\otimes\omega_{f}^{-(i+s)}\otimes \unr(z).$$
For example, if $z=1$ and $n=q-2$ (the so-called basic odd case of \cite[4.5.5]{PS25}) this amounts to 

$$\rho_{t_i,1}=\ind(\omega_{2f}^{r+1})\otimes\omega_{f}^{s(r)}$$
with $r:=2i-1$ and $s(r):=-\frac{r+1}{2}$. This is the irreducible $2$-dimensional Galois representation attached to the origin of the $r$-th irreducible component $\mathcal{C}^r$ of the curve $X_{d_{q-2}}|_{\pr_2=1}$, in the notation of \cite[4.5.5]{PS25}. Again, one may verify that $\rho_{t^w_i,1}$ is isomorphic to $\rho_{t_i,1}.$
\end{Pt*}
 
\vskip5pt 
Let $\hL=\hT$. Then $W_{\hL}=1$ and $W(\hL)=W$. Also $\hL^{\ab}=\hT$. As indicated above, we identify 
$X_*(\hT)_{q-\reg}\mod (q-1)X_*(\hT)^0$ with  $\hT(\bbF_q)$. The map becomes 
$$\hT(\bbF_q)_{\rm gen} \times \hT \longrightarrow \Hom(\cG_q^t, \hG)/\hG$$
and is $W$-invariant for the diagonal $W$-action on the source (and the dot action on the first factor $\hT(\bbF_q)$). We will see that this holds in fact on the full set $\hT(\bbF_q)$.
Again, there are two cases to consider, $n$ even and $n$ odd.

\begin{Pt*}
Let $n$ be even. We have the sequence of elements in $\mathbf{\whT}(\bbF_q)$
$$
t_i \cdot \diag(\zeta^s,\zeta^s), \hskip5pt t_i^w \cdot \diag(\zeta^s,\zeta^s)
$$ 
where $n=2s$ and $t_i:=\diag(\zeta^i,\zeta^{-i})$ (and $t_i^w$ its $w$-conjugate) for $i=0,...,\frac{q-1}{2}$.  
\end{Pt*}
\begin{Pt*}

We first treat the characters $t_i$. Recall that $i\hat{\alpha}\in X_*(\hT)_{q-\reg}$ for $i<\frac{q-1}{2}$ and $\ev_{\zeta_{q-1}} (i\hat{\alpha}) =t_i$ and $$\ev_{\zeta_{q-1}}(i\hat{\alpha}+(s,s))=t_i\cdot \diag(\zeta^s,\zeta^s).$$
One has 
$$ i\hat{\alpha}+\rho= i(\eta_1-\eta_2)+\eta_1=2i\eta_1+\eta_1-i\eta_1-i\eta_2=(2i+1)\eta_1- i (\eta_1+\eta_2).$$
Now $$\;\;\;\;N_{(F_q1)^1 / F_q1}((2i+1)\eta_1)(\zeta_{q-1})=
\left(\begin{array}{cc}
\zeta_{q-1}^{2i+1} & 0\\
0 & 1
\end{array} \right)$$
$$N_{(F_q1)^1 / F_q1}( i (\eta_1+\eta_2))(\zeta_{q-1})=
\left(\begin{array}{cc}
\zeta_{q-1}^{i} & 0\\
0 & \zeta_{q-1}^{i}
\end{array} \right).$$
All in all, 
$$s_{1,i\hat{\alpha} +\rho}=N_{(F_q1)^1 / F_q1}( i\hat{\alpha}+\rho) (\zeta_{q-1})=
\left(\begin{array}{cc}
\zeta_{q-1}^{2i+1} & 0\\
0 &1
\end{array} \right)\cdot \left(\begin{array}{cc}
\zeta_{q-1}^{-i} & 0\\
0 & \zeta_{q-1}^{-i}
\end{array} \right).$$
More generally, $$s_{1,(i\hat{\alpha}+(s,s)) +\rho}=\left(\begin{array}{cc}
\zeta_{q-1}^{2i+1} & 0\\
0 & 1
\end{array} \right)\cdot \left(\begin{array}{cc}
\zeta_{q-1}^{-(i+s)} & 0\\
0 & \zeta_{q-1}^{-(i+s)}
\end{array} \right).$$ 
It follows with $z=(z_1,z_2)\in \hT$ that as tame $\cG_q$-representations
$$\rho_{t_i\cdot \diag(\zeta^s,\zeta^s),z}
=\left(\begin{array}{cc}
\unr(z_1)\omega_{f}^{2i+1} & 0\\
0 & \unr(z_2)
\end{array} \right)
\otimes\omega_{f}^{-(i+s)}
$$
in the language of the appendix or \cite[2.1]{PS25}. For example, if $z=1$ and $n=s=0$ (the so-called basic even case of \cite[4.5.2]{PS25}) this amounts to 

$$\rho_{t_i,1}=\left(\begin{array}{cc}
\unr(x)\omega_{f}^{r+1} & 0\\
0 & \unr(x^{-1})
\end{array} \right)
\otimes\omega_{f}^{s(r)}
$$
with $r:=2i$ and $s(r):=-\frac{r}{2}$. This is the semisimple reducible $2$-dimensional Galois representation attached to a point $[x:1]\in\bbP^1\setminus\{0,\infty\}=\cC^r\setminus\{0,\infty\}$ on the $r$-th irreducible component $\mathcal{C}^r$ of the curve $X_{d_0}|_{\pr_2=1}$, in the notation of \cite[4.5.2]{PS25}. Here, $x$ is an affine coordinate near the origin $0\in \mathcal{C}^r$.
\end{Pt*}
\begin{Pt*} We now treat the characters $t^w_i$. Recall that $(q-1-i,i)\in X_*(\hT)_{q-\reg}$ for $i>0$ and $\ev_{\zeta_{q-1}} ((q-1-i,i)) =t^w_i$ and $$\ev_{\zeta_{q-1}}((q-1-i,i)+(s,s))=t^w_i\cdot \diag(\zeta^s,\zeta^s).$$ To verify the $W$-invariance in the next step, it seems practical to do the case $i+1$ here. Going through the calculation  \ref{calcul_independent} with $i+1$ 
shows $$ (q-1-(i+1),i+1)+\rho=(q-r-2)\eta_1+r+1+s(r),$$
with $r=2i$ and $s(r)=\frac{r}{2}$ as usual. For example, in the basic even case $z=1$ and $n=s=0$ it follows that 
$$\rho_{t^w_{i+1},1}
=\left(\begin{array}{cc}
\unr(y)\omega_{f}^{q-r-2} & 0\\
0 & \unr(y^{-1})
\end{array} \right)
\otimes\omega_{f}^{s(r)+r+1}
$$
This is the semisimple reducible $2$-dimensional Galois representation attached to a point $[1:y]\in\bbP^1\setminus\{0,\infty\}=\cC^r\setminus\{0,\infty\}$ on the $r$-th irreducible component $\mathcal{C}^r$ of the curve $X_{d_0}|_{\pr_2=1}$, in the notation of \cite[4.5.2]{PS25}. Here, $y$ is an affine coordinate near $\infty\in \mathcal{C}^r$.

\end{Pt*}
\begin{Pt*} We check the $W$-invariance of the map 
$$\hT(\bbF_q) \times \hT \longrightarrow \Hom(\cG_q^t, \hG)/\hG$$
with respect to the diagonal action of $W$ on the source $\hT(\bbF_q)\times \hT$.
 According to 
\ref{Pt_modular_dot} the modular dot action of $W$ on the finite torus $\hT(\bbF_q)$ is given by 
$w\bullet t= w(t)\hat{\alpha}^{-1}(\zeta)$. In particular,
$$ w\bullet t_i=w\bullet  \diag(\zeta^{i},\zeta^{-i})=\diag(\zeta^{-i},\zeta^{i})(\zeta^{-1},\zeta)=t^w_{i+1}.$$ More generally, 
$$ w\bullet (t_i \cdot \diag(\zeta^s,\zeta^s))=t^w_{i+1}\cdot \diag(\zeta^s,\zeta^s).$$

Taking into account the $W$-action on the second factor $\hT$ 
(which permutes the two coordinates $(z_1,z_2)$ on $\hT$), the $W$-invariance amounts to having isomorphisms of Galois representations
$$  \rho_{t_{i}\cdot \diag(\zeta^s,\zeta^s), z}\simeq \rho_{t^w_{i+1}\cdot \diag(\zeta^s,\zeta^s), z^w}.$$
By our above computation of both sides, this reduces to having 
isomorphisms of Galois representations
$$
\left(\begin{array}{cc}
\unr(x)\omega_{f}^{r+1} & 0\\
0 & \unr(x^{-1})
\end{array} \right)
\otimes\omega_{f}^{s(r)}
\simeq
\left(\begin{array}{cc}
\unr(y)\omega_{f}^{q-2-r} & 0\\
0 & \unr(y^{-1})
\end{array} \right)
\otimes\omega_{f}^{s(r)+r+1}
$$
relative to the identification $x=\frac{1}{y}$. But this is well-known, e.g. \cite[4.5.2]{PS25}.
\end{Pt*}
\begin{Pt*}
The case of odd $n$ is completely analogous, we therefore omit the details. \end{Pt*}

\section{Application to pro-$p$-Iwahori Hecke algebras}
We keep all the notation. Additionally, let $\mathbf{T}\subset\mathbf{G}$ be the connected reductive groups dual to $\mathbf{\whT}\subset \mathbf{\whG}$. We identify the Weyl group of 
$(\mathbf{G}, \mathbf{T})$ with $W$. Our reference for the (pro-$p$-) Iwahori Hecke algebra of a $p$-adic reductive group is Vignéras work \cite{V16}. 
\subsection{The pro-$p$-Iwahori Hecke algebra}
Let $F/\bbQ_p$ be a finite extension with residue field $\bbF_q=o_F/m_F$. 
Let $\mathbf{N}$ be the normalizer of $\mathbf{T}$ in $\mathbf{G}$ and 
put $W^{(1)}:=\mathbf{N}(F)/ \mathbf{T}(1+m_F)$ and $\widetilde{W}:=\mathbf{N}(F)/ \mathbf{T}(o_F)$. These groups fit into the short exact sequence 
$$ 1 \lra \bbT \lra W^{(1)} \lra \widetilde{W}\lra 1$$
where $\bbT=\mathbf{T}(\bbF_q)$. For any subset $S\subset \widetilde{W}$, denote by $S^{(1)}$ its preimage in $W^{(1)}$. Let $S_{\rm aff}\subset \widetilde{W}$ be a subset of simple affine roots. For any $s\in S_{\rm aff}$, denote by $h_s\in X_*(\mathbf{T})$ the corresponding cocharacter and write $\bbT_s:=h_s(\bbF_q^\times) \subset \bbT$ for the corresponding finite torus. If $s\in S_{\rm aff}^{(1)}$, write $\bbT_s$ for the finite torus associated with the image of $s$. 
Let $\ell$ be the length function on $\widetilde{W}$ 
pulled back to $W^{(1)}$. Let $\bfq$ be an indeterminate. 

\begin{Def*} \label{defgenericprop}
The \emph{pro-$p$-Iwahori Hecke algebra} is the $\bbZ[\bfq]$-algebra $\cH^{(1)}(\bfq)$ defined by generators
$$
\cH^{(1)}(\bfq):=\bigoplus_{w\in W^{(1)}} \bbZ[\bfq] T_w
$$
and relations:
\begin{itemize}
\item braid relations
$$
T_wT_{w'}=T_{ww'}\quad\textrm{for $w,w'\in W^{(1)}$ if $\ell(w)+\ell(w')=\ell(ww')$}
$$
\item quadratic relations
$$
T_s^2=\bfq T_{s^2} + c_sT_s\quad \textrm{if $s\in S_{\aff}^{(1)}$},
$$
where 
$$
c_s:=\sum_{t\in \bbT_s}T_t.
$$
\end{itemize}

The $\bbZ[\bfq]$-basis $(T_w)_{w\in W^{(1)}}$ is called the \emph{Iwahori-Matsumoto basis}.
\end{Def*}

\subsection{The Bernstein basis}

\begin{Lem*}
Let $w_1,w_2 \in W^{(1)}$. Then:
$$
\ell(w_1)+\ell(w_2)-\ell(w_1w_2)\in 2\bbN
$$
$$
\frac{\ell(w_1)+\ell(w_2)-\ell(w_1w_2)}{2}\leq \ell(w_1)
$$
$$
\frac{\ell(w_1)+\ell(w_2)-\ell(w_1w_2)}{2}\leq \ell(w_2).
$$
\end{Lem*}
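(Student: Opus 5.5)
Since $\ell$ on $W^{(1)}$ is pulled back from $\widetilde{W}$ and the projection $W^{(1)}\to\widetilde{W}$ is a group homomorphism, we may replace $w_1,w_2$ by their images in $\widetilde{W}$; the statement becomes a statement about the length function of $\widetilde{W}$. Write $\widetilde{W}=W_{\rm aff}\rtimes\Omega$ with $W_{\rm aff}$ the affine Weyl group generated by $S_{\rm aff}$ and $\Omega$ the length-zero subgroup, normalising $S_{\rm aff}$. The length is $\ell(w\omega)=\ell(w)$ and $\ell(\omega w)=\ell(w)$, and conjugation by $\omega\in\Omega$ preserves length. Writing $w_1=v_1\omega_1$, $w_2=v_2\omega_2$, we get $w_1w_2=v_1(\omega_1v_2\omega_1^{-1})\omega_1\omega_2$, so all three lengths reduce to the Coxeter group $W_{\rm aff}$ with $v_2$ replaced by its $\omega_1$-conjugate, which has the same length. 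Hence it suffices to treat $w_1,w_2$ in the Coxeter system $(W_{\rm aff},S_{\rm aff})$.

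For the parity statement, I use the sign character $\varepsilon:W_{\rm aff}\to\{\pm1\}$, $\varepsilon(w)=(-1)^{\ell(w)}$. This is a well-defined homomorphism, since the defining relations $s^2=1$ and the braid relations have even total length. Then $(-1)^{\ell(w_1w_2)}=(-1)^{\ell(w_1)+\ell(w_2)}$, so $\ell(w_1)+\ell(w_2)-\ell(w_1w_2)$ is even. Nonnegativity is subadditivity, $\ell(w_1w_2)\le\ell(w_1)+\ell(w_2)$, which follows by concatenating reduced words. Thus the quantity lies in $2\bbN$.

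For the two inequalities, the key input is the reverse triangle inequality $\ell(w_1w_2)\ge \ell(w_2)-\ell(w_1)$, obtained from $\ell(w_2)=\ell(w_1^{-1}\cdot w_1w_2)\le \ell(w_1^{-1})+\ell(w_1w_2)$ together with $\ell(w_1^{-1})=\ell(w_1)$. Rearranged, it gives $\ell(w_1)+\ell(w_2)-\ell(w_1w_2)\le 2\ell(w_1)$, which is the first inequality after dividing by $2$. Symmetrically, $\ell(w_1)=\ell(w_1w_2\cdot w_2^{-1})\le \ell(w_1w_2)+\ell(w_2)$ gives the second.

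The only point needing care is the reduction from $\widetilde{W}$ to $W_{\rm aff}$. There one must check that $\ell$ on $\widetilde{W}$ is indeed the extended length, invariant under $\Omega$ on both sides and under $\Omega$-conjugation, and that $\varepsilon$ extends as $\varepsilon(v\omega)=\varepsilon(v)$ to a homomorphism on $\widetilde{W}$. This holds because $\Omega$-conjugation permutes $S_{\rm aff}$. Alternatively, everything can be done directly on $\widetilde{W}$ by counting separating affine hyperplanes, where $\ell(w)$ equals the number of hyperplanes separating the base alcove $C$ from $wC$. Subadditivity, the reverse triangle inequality and $\ell(w^{-1})=\ell(w)$ are then immediate, and parity follows because each hyperplane separating $C$ from $w_1C$ and from $w_1w_2C$, relative to $w_1C$, is counted twice.
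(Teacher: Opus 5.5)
Your proof is correct and follows the route the paper has in mind. The paper's proof simply cites Vign\'eras \cite[\S 4.4]{V16} and remarks that the facts can also be proved directly from classical properties of the length function, which is exactly what you carry out: the sign character for parity, subadditivity for nonnegativity, and $\ell(w^{-1})=\ell(w)$ together with subadditivity for the two inequalities. Two small refinements: the reduction to $W_{\rm aff}$ is only needed for the parity claim, and your hyperplane-counting remark, stated precisely as $\ell(w_1)+\ell(w_2)-\ell(w_1w_2)=2\,\bigl|\mathrm{Sep}(C,w_1C)\cap\mathrm{Sep}(w_1C,w_1w_2C)\bigr|$, gives all three assertions at once.
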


\begin{proof}
These facts are particular cases of \cite[\S 4.4]{V16}. They may also be proved directly using classical properties of the length function.
\end{proof}

\begin{Pt*}\label{qdc}
Consistently with \emph{loc. cit}, we set
$$
\bfq_{w_1,w_2}:=\bfq^{\frac{\ell(w_1)+\ell(w_2)-\ell(w_1w_2)}{2}}\quad\in\bfq^{\bbN}.
$$
Furthermore, we set
$$
\bfd_{w_1,w_2}:=\bfq^{\ell(w_1)-\frac{\ell(w_1)+\ell(w_2)-\ell(w_1w_2)}{2}}\quad\in\bfq^{\bbN}
$$
and
$$
\bfc_{w_1,w_2}:=\bfq^{\ell(w_2)-\frac{\ell(w_1)+\ell(w_2)-\ell(w_1w_2)}{2}}\quad\in\bfq^{\bbN}.
$$
Set also $\bfq_w:=\bfq^{\ell(w)}$. Then we have the following identities in $\bfq^{\bbN}$:
\begin{eqnarray*}
\bfq_{w_1}\bfq_{w_2}&=&\bfq_{w_1w_2}\bfq_{w_1,w_2}^2\\
\bfq_{w_1}&=&\bfq_{w_1,w_2}\bfd_{w_1,w_2}\\
\bfq_{w_2}&=&\bfq_{w_1,w_2}\bfc_{w_1,w_2}.
\end{eqnarray*}
\end{Pt*}

\begin{Th*}\textbf{\emph{ (Vignéras)}}
\label{BbasisV16}There exists a basis $(E(w))_{w\in W^{(1)}}$ of $\cH^{(1)}(\bfq)$ over $\bbZ[\bfq]$
(associated with the antidominant orientation) 
$$
\cH^{(1)}(\bfq)=\bigoplus_{w\in W^{(1)}}\bbZ[\bfq]E(w)
$$
satisfying the product formula
$$
E(e^{\nu})E(w)=\bfq_{\nu,w}E(e^{\nu}w)\quad\textrm{for $\nu\in X_*(\mathbf{T})^{(1)}$ and $w\in W^{(1)}$}.
$$

It is called the \emph{Bernstein basis}.
\end{Th*}

\begin{proof}
This is a particular case of \cite[5.25-5.26]{V16}. More precisely, Vignéras associates an \emph{alcove walk basis} $(E_o(w))_{w\in W^{(1)}}$ to any \emph{orientation} $o$, such that $E_o(w)=T_w$ if $w\in\Omega^{(1)}$ and satisfying the product formula. 
\end{proof}

For $\nu\in X_*(\mathbf{T})^{(1)}$, we also write simply $E(\nu)$ for $E(e^{\nu})$. In particular, the algebra $\cH^{(1)}(\bfq)$ contains the following \emph{subalgebra}, which moreover is \emph{commutative}:

\begin{Def*}
We call the sub-$\bbZ[\bfq]$-algebra
$$
\cA^{(1)}(\bfq):=\bigoplus_{\nu\in X_*(\mathbf{T})^{(1)}}\bbZ[\bfq]E(\nu)\subset \cH^{(1)}(\bfq)
$$
the \emph{Vignéras subalgebra}.
\end{Def*}

\subsection{The Iwahori Hecke algebra}
Here is the analogue for the group $\widetilde{W}$ instead of $W^{(1)}$.

\begin{Def*} \label{defgenericIwahori}
The \emph{Iwahori Hecke algebra} is the $\bbZ[\bfq]$-algebra $\cH(\bfq)$ defined by generators
$$
\cH(\bfq):=\bigoplus_{w\in \widetilde{W}} \bbZ[\bfq] T_w
$$
and relations:
\begin{itemize}
\item braid relations
$$
T_wT_{w'}=T_{ww'}\quad\textrm{for $w,w'\in \widetilde{W}$ if $\ell(w)+\ell(w')=\ell(ww')$}
$$
\item quadratic relations
$$
T_s^2=\bfq + (\bfq-1)T_s\quad \textrm{if $s\in S_{\aff}$}.
$$
\end{itemize}

The $\bbZ[\bfq]$-basis $(T_w)_{w\in \widetilde{W}}$ is called the \emph{Iwahori-Matsumoto basis}.
\end{Def*}

Here is the analogue of \ref{BbasisV16} 
 and for the Iwahori Hecke algebra.

\begin{Th*} \textbf{\emph{(Vignéras)}} \label{BpresHq}
The Iwahori Hecke algebra admits a \emph{Bernstein basis} 
$$
\cH(\bfq)=\bigoplus_{w\in \widetilde{W}}\bbZ[\bfq]E(w)
$$
having the following property: 
For $(\nu,w)\in X_*(\mathbf{T})\times \widetilde{W}$, there is the product formula
$$
E(e^{\nu})E(w)=\bfq_{\nu,w}E(e^{\nu}w)\ ;
$$
in particular
$$
\cA(\bfq):=\bigoplus_{\nu \in X_*(\mathbf{T})}\bbZ[\bfq] E(\nu)
$$
is a commutative sub-$\bbZ[\bfq]$-algebra.
\end{Th*}
 
\begin{Rem*} \label{rem_split} The exact sequence $$1\rightarrow \bfT(\bbF_q)\rightarrow X^{(1)}_*(\bfT)\rightarrow X_*(\bfT)\rightarrow 1$$ 
is split: the lattice $X_*(\bfT)$ identifies canonically with the free group $\bfT(F)/\bfT(o_F)$ and one may embed the latter, via a choice of uniformizer in $o_F$, into $\bfT(F)$. We choose once for all a splitting and thus regard $X_*(\bfT)$ as a subgroup of $X^{(1)}_*(\bfT)$. Note that the above product formula implies 
$E(t\nu)=E(t)E(\nu)$ for $t\in \bfT(\bbF_q), \nu\in X_*(\bfT)$, since $\bfq_{t,\nu}=1$ in this case.
As a consequence, the multiplication map 
$$\bbZ[\bfT(\bbF_q)]\otimes_{\bbZ}\cA(\bfq)\iso \cA^{(1)}(\bfq), \hskip5pt E(t)\otimes E(\nu)\mapsto E(t)E(\nu)$$
is a ring isomorphism.
\end{Rem*}

\subsection{Generic Bernstein isomorphism}

\begin{Pt*}
Recall the Vignéras sub-$\bbZ[\bfq]$-algebra 
$
\cA(\bfq)$
of $\cH(\bfq)$, cf. \ref{BpresHq}. On the dual side, recall the `toral part' $V_{\mathbf{\whT}\subset\mathbf{\whG},\rho_{\ad}}$ of the Zhu monoid $V_{\mathbf{\whG},\rho_{\ad}}$, which maps to $\bbA_1=\Spec(\bbZ[\bfq])$ by $d\rho_{\ad}$, cf. \ref{VTGrhoad}. 
\end{Pt*}

\begin{Th*} \label{ThBernsteingeneric}
Over the identity $X_*(\mathbf{T})=X^*(\mathbf{\whT})$, the $\bbZ[\bfq]$-linear map 
\begin{eqnarray*}
\sB(\bfq):\cA(\bfq) & \lra & \bbZ[V_{\mathbf{\whT}\subset\mathbf{\whG},\rho_{\ad}}]\\
E(\nu) & \lmapsto & (e_1^{\nu}\otimes e_2^{-\nu_-})|_{\rho_{\ad}}
\end{eqnarray*}
is an isomorphism of $\bbZ[\bfq]$-algebras.
\end{Th*}

\begin{proof}
The product formula in $\cA(\bfq)$ is $E(\nu)E(\nu')=\bfq_{\nu,\nu'}E(\nu+\nu')$ with
$$
\bfq_{\nu,\nu'}=\bfq^{\frac{\ell(\nu)+\ell(\nu')-\ell(\nu+\nu')}{2}}.
$$
The result follows thus directly from \ref{Cor_basis}.
\end{proof}
This generalizes \cite[Thm. 6.1.1]{PS23}, which was the case $\mathbf{\whG}=GL_2$.

\begin{Prop*} \label{Bernsteinequiv}
The Bernstein isomorphism $\sB(\bfq)$ is $W$-equivariant. In particular, it induces an isomorphism
$$
\xymatrix{
\sB(\bfq)^{W}:\cA(\bfq)^{W}\ar[r]^<<<<<{\sim} & \bbZ[V_{\mathbf{\whT}\subset\mathbf{\whG},\rho_{\ad}}]^{W}.
}
$$
\end{Prop*}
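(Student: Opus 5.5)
The plan is to compare the two $W$-actions on the distinguished bases $E(\nu)$ on either side. By Thm. \ref{ThBernsteingeneric}, $\sB(\bfq)$ is a $\bbZ[\bfq]$-algebra isomorphism carrying the basis $E(\nu)$ of $\cA(\bfq)$ to the basis $E(\nu)=(e_1^{\nu}\otimes e_2^{-\nu_-})|_{\rho_{\ad}}$ of Cor. \ref{Cor_basis}. It therefore suffices to show that $w\in W$ acts on both bases by the same permutation, namely $E(\nu)\mapsto E(w(\nu))$. Both actions are $\bbZ[\bfq]$-linear: on the Hecke side by definition, and on the dual side because $d\rho_{\ad}$ is $W$-equivariant for the trivial action on $\bbA^1$ (see \ref{defWfaction}).

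\emph{Hecke side.} I take the $W$-action on $\cA(\bfq)$ to be the one coming from the action of $W$ on $X_*(\mathbf{T})$, i.e. $w\cdot E(\nu)=E(w(\nu))$. This is the action whose invariants Vignéras relates to the center, which is the use made of it afterwards. If the paper's intended action is instead defined through conjugation by lifts in $W^{(1)}$, I would first reduce to this formula using that $E(\nu)$ depends only on $\nu$ and the chosen orientation.

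\emph{Dual side.} By \ref{defWfaction}, over $\mathbf{\whT}_{\ad}$ the normaliser of $\mathbf{\whT}$ acts on $V_{\mathbf{\whT}\subset\mathbf{\whG}}|_{\mathbf{\whT}_{\ad}}=\mathbf{\whT}\times^{Z_{\mathbf{\whG}}}\mathbf{\whT}$ by conjugation on the first factor only. On characters this reads
$$w(e_1^{\nu_1}\otimes e_2^{\nu_2})=e_1^{w(\nu_1)}\otimes e_2^{\nu_2}.$$
The ring $\bbZ[V_{\mathbf{\whT}\subset\mathbf{\whG}}]$ embeds into $\bbZ[\mathbf{\whT}\times^{Z_{\mathbf{\whG}}}\mathbf{\whT}]$, and the action on the closure is the restriction of the action on this open dense subgroup, so the same formula holds on $V_{\mathbf{\whT}\subset\mathbf{\whG}}$. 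It then persists after base change along $\rho_{\ad}^+$.

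Applying this to $E(\nu)$, and using that $(w\nu)_-=\nu_-$ because $\nu_-$ depends only on the $W$-orbit of $\nu$, gives
$$w\big((e_1^{\nu}\otimes e_2^{-\nu_-})|_{\rho_{\ad}}\big)=(e_1^{w\nu}\otimes e_2^{-(w\nu)_-})|_{\rho_{\ad}}=E(w\nu).$$
As a cross-check I would compare this with Lem. \ref{Wfbulletinvar}. Under the embedding of Lem. \ref{ringVTrhoad}, $E(\nu)$ corresponds to $e^{\nu}\bfq^{\lan \nu-\nu_-,\rho_{\ad}\ran}$, and the $W\bullet$-formula sends it to $e^{w\nu}\bfq^{\lan w\nu-\nu_-,\rho_{\ad}\ran}$, which is exactly the image of $E(w\nu)$. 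Since that embedding is injective, this confirms the computation.

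The two actions thus agree on bases, so $\sB(\bfq)$ is $W$-equivariant. Taking invariants of a $W$-equivariant isomorphism gives the isomorphism $\sB(\bfq)^W$, whose inverse is the restriction of $\sB(\bfq)^{-1}$ to $W$-invariants. The only step I expect to need care is pinning down precisely the $W$-action on $\cA(\bfq)$ used in the paper, and checking that it is the untwisted action $E(\nu)\mapsto E(w\nu)$ rather than a $\bfq$-twisted one. The dual-side computation is formal.
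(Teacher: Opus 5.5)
Your proposal is correct and matches the paper's proof. The paper likewise observes that $W$ acts on $e_1^{\nu}\otimes e_2^{-\nu_-}$ through the first factor only, and uses $(w\nu)_-=\nu_-$ to conclude that $E(\nu)\mapsto E(w\nu)$ on both sides. Your cross-check against Lemma \ref{Wfbulletinvar} and your remark on extending the action from the open dense subgroup to the closure are harmless extra detail.
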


\begin{proof}
Indeed, by definition of the action of $W$ of $V_{\mathbf{\whT}\subset\mathbf{\whG}}$ \ref{defWfaction}, and because $\nu_-\in X^*(\mathbf{\whT})_-$ depends only on the $W$-orbit of $\nu\in X^*(\mathbf{\whT})$, we have
$$
\forall w\in W,\quad w(e_1^{\nu}\otimes e_2^{-\nu_-})=e_1^{w(\nu)}\otimes e_2^{-\nu_-}=e_1^{w(\nu)}\otimes e_2^{-w(\nu)_-}.
$$
\end{proof}

\subsection{Bernstein isomorphism at $\bfq=0$} \label{VGT0}

At $\bfq=0$, Bernstein's isomorphism \ref{ThBernsteingeneric} reads:

\begin{Cor*} 
Over the identity $X_*(\mathbf{T})=X^*(\mathbf{\whT})$, the $\bbZ[\bfq]$-linear map 
\begin{eqnarray*}
\sB(0):\cA(0) & \lra & \bbZ[V_{\mathbf{\whT}\subset\mathbf{\whG},0}]\\
E(\nu) & \lmapsto & (e_1^{\nu}\otimes e_2^{-\nu_-})|_{0}
\end{eqnarray*}
is an isomorphism of $\bbZ$-algebras. It is $W$-equivariant.
\end{Cor*}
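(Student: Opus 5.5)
The plan is to obtain the statement by base change along $\bfq\mapsto 0$ from Theorem \ref{ThBernsteingeneric} and Proposition \ref{Bernsteinequiv}. Here $\cA(0)$ means $\cA(\bfq)\otimes_{\bbZ[\bfq]}\bbZ[\bfq]/(\bfq)$. On the dual side, $V_{\mathbf{\whT}\subset\mathbf{\whG},0}$ is the fibre of $d\rho_{\ad}$ over $0\in\bbA^1$, so $\bbZ[V_{\mathbf{\whT}\subset\mathbf{\whG},0}]=\bbZ[V_{\mathbf{\whT}\subset\mathbf{\whG},\rho_{\ad}}]\otimes_{\bbZ[\bfq]}\bbZ[\bfq]/(\bfq)$.

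The steps are as follows.

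\begin{enumerate}
\item Tensor the $\bbZ[\bfq]$-algebra isomorphism $\sB(\bfq)$ with $\bbZ=\bbZ[\bfq]/(\bfq)$. An isomorphism of $\bbZ[\bfq]$-algebras stays an isomorphism of $\bbZ$-algebras after any base change, so $\sB(0):=\sB(\bfq)\otimes\bbZ$ is an isomorphism $\cA(0)\iso\bbZ[V_{\mathbf{\whT}\subset\mathbf{\whG},0}]$.
\item Identify the formula. Since $\cA(\bfq)$ is free on the $E(\nu)$, the images of the $E(\nu)$ form a $\bbZ$-basis of $\cA(0)$. By Corollary \ref{Cor_basis} and Proposition \ref{Prop_basis2}, the elements $(e_1^{\nu}\otimes e_2^{-\nu_-})|_{0}$ form a $\bbZ$-basis of $\bbZ[V_{\mathbf{\whT}\subset\mathbf{\whG},0}]$. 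Hence $\sB(0)$ sends $E(\nu)$ to $(e_1^{\nu}\otimes e_2^{-\nu_-})|_{0}$, a basis to a basis.
\item As a consistency check, the product formula in $\cA(0)$ is $E(\nu)E(\nu')=\bfq_{\nu,\nu'}|_{\bfq=0}\,E(\nu+\nu')$. This equals $E(\nu+\nu')$ when $\nu,\nu'$ lie in a common closed chamber and $0$ otherwise, which matches Proposition \ref{Prop_basis2}.
\item For $W$-equivariance, the $W$-action on $V_{\mathbf{\whT}\subset\mathbf{\whG},\rho_{\ad}}$ is over $\bbA^1$ with the trivial action on the base, by \ref{defWfaction}. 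So it restricts to the fibre at $0$, and the restriction map $\bbZ[V_{\mathbf{\whT}\subset\mathbf{\whG},\rho_{\ad}}]\to\bbZ[V_{\mathbf{\whT}\subset\mathbf{\whG},0}]$ is $W$-equivariant. The $W$-action on $\cA(\bfq)$ is $\bbZ[\bfq]$-linear, so it descends to $\cA(0)$. Reducing the equivariance of Proposition \ref{Bernsteinequiv} modulo $\bfq$ then gives equivariance of $\sB(0)$. Alternatively, one checks it directly on the basis: $w(e_1^{\nu}\otimes e_2^{-\nu_-})|_0=(e_1^{w\nu}\otimes e_2^{-(w\nu)_-})|_0$.
\end{enumerate}

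The only point needing care is the identification $\bbZ[V_{\mathbf{\whT}\subset\mathbf{\whG},0}]=\bbZ[V_{\mathbf{\whT}\subset\mathbf{\whG},\rho_{\ad}}]/\bfq$. This holds by definition of the fibre, and the ring is flat over $\bbZ[\bfq]$ by Corollary \ref{Cor_basis}, so no torsion issues arise. I expect no real obstacle.
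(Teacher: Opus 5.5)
Your proposal is correct and matches the paper's argument. The paper obtains the corollary by specializing the generic isomorphism $\sB(\bfq)$ of Theorem \ref{ThBernsteingeneric} and its $W$-equivariance (Proposition \ref{Bernsteinequiv}) along $\bfq\mapsto 0$, reading off the basis and the product formula at $\bfq=0$ from Corollary \ref{Cor_basis} and Proposition \ref{Prop_basis2}. One identification you use without stating it is that the fibre of $d\rho_{\ad}$ over $0\in\bbA^1$ equals the fibre of $d$ over $0\in\hT_{\ad}^+$; this holds because $\rho_{\ad}^+(0)=0$, since every simple root pairs to $1$ with $\rho_{\ad}$.
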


Accordingly, and as we already did in \ref{Cor_basis}, we set 
$$
\forall\nu\in X^*(\mathbf{\whT}),\quad E(\nu):=(e_1^{\nu}\otimes e_2^{-\nu_-})|_{0}\in \bbZ[V_{\mathbf{\whT}\subset\mathbf{\whG},0}].
$$
Recall that the quantity $\bfq_{\nu,\nu'}=\bfq^{\frac{\ell(\nu)+\ell(\nu')-\ell(\nu+\nu')}{2}}$ is equal to $1$ if there exists a Weyl chamber $\fC$ such that $\nu$ and $\nu'$ belong to the closure $\overline{\fC}$, and it is divisible by $\bfq$ otherwise. 
Thus
$$
\bbZ[V_{\mathbf{\whT}\subset\mathbf{\whG},0}]=\bigoplus_{\nu\in X^*(\mathbf{\whT})}\bbZ E(\nu)
$$
with product formula 
$$
\forall \nu,\nu'\in X^*(\mathbf{\whT}), \quad E(\nu)E(\nu')=
\left\{ \begin{array}{ll}
E(\nu+\nu') & \textrm{if $\exists\fC: \nu,\nu' \in X^*(\mathbf{\whT})\cap \overline{\fC}$}\\ 
0 & \textrm{otherwise}.
\end{array} \right.
$$

\subsection{Augmented Bernstein isomorphism at $\bfq=q=0\in \bbF_q$}

\begin{Pt*}

Our fixed generator $\zeta_{q-1}$ of $\bbF_q^\times$, 
yields an isomorphism $\bbZ/(q-1)\bbZ\simeq \bbF_q^\times$. This induces a ring isomorphism 
between the ring of functions $\bbF_q[(\bbZ/(q-1)\bbZ)^r]$ of $\hT(\bbF_q)$ and the group ring 
$\bbF_q[\mathbf{T}(\bbF_q)]$ of the finite torus $\mathbf{T}(\bbF_q)$. This corresponds to an isomorphism between the constant schemes $\hT(\bbF_q)$ and $\bbT^\vee:=\Spec \bbF_q[\mathbf{T}(\bbF_q)]$.
\end{Pt*}
Recall the augmented semigroup $V^{(1)}_{\mathbf{\whT}\subset\mathbf{\whG},0,\bbF_q}$ over $\bbF_q$, cf. \ref{subsec_augmented_semi}.

\begin{Prop*} \label{ThBernstein1generic}\label{Bernstein1equiv}
 Over the isomorphism  $\bbF_q[\bbT]\simeq \cO(\hT(\bbF_q))$, the Bernstein isomorphism $\sB(0)_{\bbF_q}$ induces 
an isomorphism of $\bbF_q$-algebras
$$ 
\sB^{(1)}(0)_{\bbF_q}:\cA^{(1)}(0)_{\bbF_q} \iso \bbF_q[V^{(1)}_{\mathbf{\whT}\subset\mathbf{\whG},0,\bbF_q}].
$$
The isomorphism is $W$-equivariant and induces an isomorphism
$$
\xymatrix{
\sB^{(1)}(0)_{\bbF_q}^{W}:\cA^{(1)}(0)_{\bbF_q}^{W}\ar[r]^<<<<<{\sim} & \bbF_q[V^{(1)}_{\mathbf{\whT}\subset\mathbf{\whG},0,\bbF_q}]^{W}.
}
$$
\end{Prop*}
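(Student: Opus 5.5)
The plan is to build $\sB^{(1)}(0)_{\bbF_q}$ as a tensor product of two isomorphisms already available, and then to check $W$-equivariance factor by factor. First, specialize the splitting of Rem.~\ref{rem_split} to $\bfq=0$ and base change to $\bbF_q$. Multiplication then gives a ring isomorphism
$$\bbF_q[\bfT(\bbF_q)]\otimes_{\bbF_q}\cA(0)_{\bbF_q}\iso \cA^{(1)}(0)_{\bbF_q},\qquad E(t)\otimes E(\nu)\mapsto E(t)E(\nu),$$
because the identity $E(t)E(\nu)=E(t\nu)$ together with the basis statement of Thm.~\ref{BbasisV16} survive any base change of $\bbZ[\bfq]$. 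On the dual side, since $V^{(1)}_{\hT\subset\hG,0,\bbF_q}=\hT(\bbF_q)\times_{\bbF_q}V_{\hT\subset\hG,0,\bbF_q}$, its coordinate ring is $\cO(\hT(\bbF_q))\otimes_{\bbF_q}\bbF_q[V_{\hT\subset\hG,0}]$. I then define
$$\sB^{(1)}(0)_{\bbF_q}:=\iota\otimes \sB(0)_{\bbF_q},$$
where $\iota:\bbF_q[\bbT]\simeq\cO(\hT(\bbF_q))$ is the isomorphism fixed via $\zeta_{q-1}$, and $\sB(0)_{\bbF_q}$ is the base change of the Corollary on $\sB(0)$. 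Precomposing with the inverse of the multiplication isomorphism shows that $\sB^{(1)}(0)_{\bbF_q}$ is an isomorphism of $\bbF_q$-algebras.

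Next comes $W$-equivariance. The action of $W$ on $\cA^{(1)}(0)_{\bbF_q}$ is the one induced by conjugation by $\mathbf N(F)$ on $X_*(\bfT)^{(1)}$. I will check three things. First, under the chosen splitting, $w$ acts on the product $E(t)E(\nu)$ as $E(w(t))E(w(\nu))$. Second, $\sB(0)$ is $W$-equivariant; this is the Corollary on $\sB(0)$, obtained from Prop.~\ref{Bernsteinequiv} by base change. Third, $\iota$ is $W$-equivariant, where $W$ acts on $\cO(\hT(\bbF_q))$ through its action on $\hT(\bbF_q)$. The third point holds because $\iota$ comes from the $W$-equivariant identification $X_*(\bfT)=X^*(\hT)$ reduced modulo $q-1$, and $W$ acts naturally on both sides of that identification. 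With these three facts, the tensor product map intertwines $W$ with the diagonal action on $V^{(1)}$.

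The main obstacle is the first point. The chosen splitting $X_*(\bfT)\hookrightarrow X_*(\bfT)^{(1)}$, given by a uniformizer $\varpi$, is $W$-stable, since $w(\nu(\varpi))=(w\nu)(\varpi)$ for lifts of $w$ in $\mathbf N(o_F)$. The real question is whether the $W$-action on the Bernstein elements $E(\nu)$ is compatible with this splitting; for the antidominant orientation, $E$ is only defined through alcove walks. I would deal with this by defining the $W$-action on $\cA^{(1)}(0)$ directly on the basis, $w\cdot E(t\nu):=E(w(t)w(\nu))$, which is the action used by Vignéras in identifying the center. It then suffices to verify that this respects the product formula, which follows because the length is $W$-invariant on $X_*(\bfT)$, so that $\bfq_{w\nu,w\nu'}=\bfq_{\nu,\nu'}$.

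Once equivariance is established, the isomorphism on $W$-invariants is formal: a $W$-equivariant ring isomorphism restricts to an isomorphism between the rings of invariants.
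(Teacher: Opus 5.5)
Your proposal is correct and follows the paper's route. The paper likewise uses the splitting of Rem.~\ref{rem_split} to write $\cA^{(1)}(0)_{\bbF_q}\simeq\bbF_q[\bbT]\otimes\cA(0)_{\bbF_q}$ and $\bbF_q[V^{(1)}_{\hT\subset\hG,0,\bbF_q}]=\cO(\hT(\bbF_q))\otimes_{\bbZ}\bbZ[V_{\hT\subset\hG,0}]$, tensors the identification $\bbF_q[\bbT]\simeq\cO(\hT(\bbF_q))$ with $\sB(0)$, and simply asserts that both decompositions respect the $W$-action. You go further and justify that compatibility: the $W$-stability of the uniformizer splitting, the $W$-invariance of $\ell$ (which makes $E(\lambda)\mapsto E(w\lambda)$ a ring action), and the $W$-equivariance of $\iota$ coming from $X_*(\bfT)=X^*(\hT)$ modulo $q-1$.
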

\begin{proof} This is clear since 
$\cA^{(1)}(0)_{\bbF_q}=\bbF_q[\bbT]\otimes \cA^{(1)}(0)$
and $\bbF_q[V^{(1)}_{\mathbf{\whT}\subset\mathbf{\whG},0,\bbF_q}]= \cO(\hT(\bbF_q))\otimes \bbZ[V_{\mathbf{\whT}\subset\mathbf{\whG},0}]$
as $\bbF_q$-algebras and these tensor product decompositions respect the $W$-action.

\end{proof}
\subsection{Galois parametrization of central Hecke strata for $GL_n$}
Let $\bfG=GL_n$ and $G=\bfG(F)$. 
\begin{Pt*} The 
pro-$p$-Iwahori Hecke algebra $\cH^{(1)}_{\bbF_q}$ of the reductive $p$-adic group $G$ (with coefficients in $\bbF_q$)  is the 
convolution algebra over $\bbF_q$ on the set of 
double cosets of $G$ relative to the choice of a pro-$p$-Iwahori subgroup in $G$, \cite[2.1]{V16}.
It satisfies 
 $$\cH^{(1)}_{\bbF_q}=\cH^{(1)}(\bfq) \otimes_{\bbZ[\bfq], \bfq\mapsto 0\in\bbF_q}\bbF_q,$$
 cf. \cite[Ex. 4.10]{V16}. 
Moreover, the theory of the Bernstein basis, developed in loc.cit., shows that 
the maximal commutative subring  $\cA^{(1)}_{\bbF_q}\subset \cH^{(1)}_{\bbF_q}$ relative to the antidominant orientation is isomorphic, in a $W$-equivariant way, to $\cA^{(1)}(0)_{\bbF_q}$, cf. \cite[Cor. 5.28]{V16}. It follows that the center $Z(\cH^{(1)}_{\bbF_q})$ of $\cH^{(1)}_{\bbF_q}$ equals the ring of invariants
$\cA^{(1)}(0)_{\bbF_q}^{W}$. 
\end{Pt*}
\begin{Pt*} 
The Bernstein isomorphism, cf. 
\ref{Bernstein1equiv}, thus induces an isomorphism 
$$ \Xi:=\Spec Z(\cH^{(1)}_{\bbF_q}) \iso V^{(1)}_{\mathbf{\whT}\subset\mathbf{\whG},0,\bbF_q}/W=S.$$
The target has its stratification
$$ S=\bigcup_{\hL\in\widehat{\cL}} S_{\hL},$$
cf. \ref{Pt_stratification}, indexed by the standard Levi subgroups $\hL\in\widehat{\cL}$ in the dual group $\hG$. It
induces a corresponding stratification of $\Xi$. Let $\mathbf{L}$ be the standard Levi of $\mathbf{G}$ corresponding\footnote{The standard Levi subgroups of $\mathbf{G}$ and $\mathbf{\whG}$ are in canonical bijection via the duality.}  to $\hL$. The corresponding group $W(\mathbf{L})$ identifies canonically with $W(\hL)$.
Let $\Xi_{\mathbf{L}}$ (resp. $\Xi_{\mathbf{L},\rm gen}$) be the inverse image of (the generic part $S_{\hL,\rm gen}$ of) the stratum $S_{\hL}$, cf. \ref{Pt_Galois_par}. It has its induced $W(\mathbf{L})$-action (which equals the dot action on the finite torus part).
Pulling back the Galois parametrization from
\ref{Pt_Galois_par} to $\Xi_{\mathbf{L},\rm gen}$ yields a 
Galois parametrization 
$$ \Xi_{\mathbf{L},\rm gen}(\overline{\bbF}_p) \longrightarrow \Hom(\cG_q^t, \hG(\overline{\bbF}_p))/\hG(\overline{\bbF}_p)$$
which is $W(\mathbf{L})$-invariant. 
\end{Pt*}
\begin{Pt*}
In the case of $\bfG=GL_2$ and $F=\bbQ_q$, the unramified extension of $\bbQ_p$ with residue field $\bbF_q$, our discussion in 
\ref{section_GL2} shows that the two maps 
$$\Xi_{\mathbf{L},\rm gen}(\overline{\bbF}_p) \rightarrow \Hom(\cG_q^t, \hG(\overline{\bbF}_p))/\hG(\overline{\bbF}_p),$$
for $\hL=\hG$ and $\hL=\hT$, are defined on the whole of $\Xi_{\mathbf{L}}(\overline{\bbF}_p)$ and that the set-theoretic map on $\Xi(\overline{\bbF}_p)=\Xi_{\mathbf{G}}(\overline{\bbF}_p)\coprod \Xi_{\mathbf{T}}(\overline{\bbF}_p)$ into the space of Galois representations, is induced from the scheme morphism $$\sL: \Xi\rightarrow X(q)_{\bbF_q}$$ between $\Xi$ and the {\it curve of semisimple $2$-dimensional Galois representations} $X(q)_{\bbF_q}$ constructed in \cite{PS25}. In particular the two maps yield
bijections 
 $$\Xi_{\mathbf{G}}(\overline{\bbF}_p) \iso \{ \text{irreducible $\cG_q$-representations of dimension $2$}\}/ \simeq $$
and 
$$\hskip35pt (\Xi_{\mathbf{T}}/W)(\overline{\bbF}_p) \iso \{ \text{reducible semisimple $\cG_q$-representations of dimension $2$}\}/\simeq.$$
\end{Pt*}

Remark: Analyzing the finer structure of the Galois parametrization of 
$\Xi_{\mathbf{L},\rm gen}$ for varying $\hL$ in the case of $\bfG=GL_3$ or $GL_4$ is a problem, which we hope to address in future work. 

\section{Appendix: The standard scheme}

Let $\mathbf{\whG}$ be a connected split group over $\bbZ$, with maximal torus and Borel subgroup
$\mathbf{\whT}\subset\mathbf{\whB}\subset \mathbf{\whG}$. Let $W=W(\mathbf{\whG},\mathbf{\whT})$ be the finite Weyl group associated to $(\mathbf{\whG},\mathbf{\whT})$.

\begin{Def}\label{defstandardscheme}
The $\bbZ$-scheme
$$
\mathds{U}_{\mathbf{\whG}}:=\Spec(\bbZ[X^*(\mathbf{\whT})^-])
$$
is called the \emph{standard scheme} associated to $(\mathbf{\whG},\mathbf{\whT},\mathbf{\whB})$.
\end{Def}

\subsection*{The orbit structure}

\begin{Pt}
The scheme $\mathds{U}_{\mathbf{\whG}}$ is a toric scheme over $\bbZ$ for the torus $\mathbf{\whT}$. Thus it is a normal integral scheme containing $\mathbf{\whT}=\Spec(\bbZ[X^*(\mathbf{\whT})])$ as an open subscheme, and which is equipped with an action of $\mathbf{\whT}$ extending the action on itself by multiplication. Moreover it is affine, so it is a diagonalizable $\bbZ$-monoid scheme with monoid of characters $X^*(\mathds{U}_{\mathbf{\whG}})=X^*(\mathbf{\whT})^-$ and group of units $\mathbf{\whT}$, and the $\mathbf{\whT}$-action on $\mathds{U}_{\mathbf{\whG}}$ is the restriction to $\mathbf{\whT}\times \mathds{U}_{\mathbf{\whG}}$ of the monoid law. We will give a precise description of the orbit structure of the $\mathbf{\whT}$-scheme $\mathds{U}_{\mathbf{\whG}}$, in particular of the \emph{boundary}
$$
\partial(\mathbf{\whT}\subset \mathds{U}_{\mathbf{\whG}}):=\mathds{U}_{\mathbf{\whG}}\setminus \mathbf{\whT}
$$
of $\mathbf{\whT}$ in the \emph{partial compactification} $\mathds{U}_{\mathbf{\whG}}$.
\end{Pt}

\begin{Pt}
Let $\mathbf{T}\subset \mathbf{G}$ be the corresponding dual reductive groups. The Borel subgroup $\mathbf{\whB}$ determines a Borel
$\mathbf{B}$ in $\mathbf{G}$ containing $\mathbf{T}$.
We denote by
$$
\xymatrix{
\lan\ ,\ \ran:X^*(\mathbf{T}) \times X^*(\mathbf{\whT}) \ar[r] & \bbZ
}
$$
the perfect duality pairing coming from the identity $X^*(\mathbf{\whT}) =X_*(\mathbf{T})$. If $\cS\subset X^*(\mathbf{T})$ is a subset, we denote by $\cS^{\perp}\subset X^*(\mathbf{\whT})$ the orthogonal subgroup. In particular, one may consider the subgroup $\Delta^{\perp}$ where $\Delta\subset X^*(\mathbf{T})$ is the basis of simple roots defined by $\mathbf{B}$. 
\end{Pt}

\begin{Def}
The $\bbZ$-scheme
$$
\mathds{Z}_{\mathbf{\whG}}:=\Spec(\bbZ[\Delta^{\perp}\cap X^*(\mathbf{\whT})^-])
$$
is called the \emph{zero scheme} associated to $(\mathbf{\whG},\mathbf{\whT},\mathbf{\whB})$.
\end{Def}

\begin{Pt}
The scheme $\mathds{Z}_{\mathbf{\whG}}$ is an affine toric scheme over $\bbZ$ for the torus 
$$
\mathds{S}_{\mathbf{\whG}}:=\Spec(\bbZ[\Delta^{\perp}]).
$$
In particular, it is a diagonalizable monoid scheme with monoid of characters $X^*(\mathds{Z}_{\mathbf{\whG}})=\Delta^{\perp}\cap X^*(\mathbf{\whT})^-$ and group of units $\mathds{S}_{\mathbf{\whG}}$. 
\end{Pt}

\begin{Pt}
A parabolic subgroup of $\mathbf{\whG}$ containing $\mathbf{\whB}$ is called a \emph{standard parabolic}, and the Levi factor containing $\mathbf{\whT}$ of a standard parabolic is called a \emph{standard Levi}. We denote by $\widehat{\cL}$ be the set of standard Levi's of $\mathbf{\whG}$. It is in 1-1 correspondence with the power set of the set $\widehat{\Delta}$ of simple roots defined by $\mathbf{\whB}\supset \mathbf{\whT}$:
\begin{eqnarray*}
 \cP(\widehat{\Delta}) & \iso & \widehat{\cL}  \\
\whI & \lmapsto & \textrm{$\mathbf{\whL}(\whI)$ such that $\widehat{\Delta}_{\mathbf{\whL}(\whI)}=\whI$}.
\end{eqnarray*}
A similar correspondence holds for the group $\mathbf{G}$ with respect to the maximal torus $\mathbf{T}$ and Borel $\mathbf{B}$:
\begin{eqnarray*}
 \cP(\Delta) & \iso & \cL  \\
I & \lmapsto & \textrm{$\mathbf{L}(I)$ such that $\Delta_{\mathbf{L}(I)}=I$}.
\end{eqnarray*}
The two correspondences are related by the canonical bijection $\widehat{\Delta}\xrightarrow{\sim}\Delta$, $\widehat{\alpha}\mapsto\alpha$: the latter induces a canonical bijection $\cP(\widehat{\Delta})\xrightarrow{\sim}\cP(\Delta)$, $\whI \mapsto  I$, hence a canonical bijection $\widehat{\cL}\xrightarrow{\sim}\cL$, 
$\mathbf{\whL}\mapsto \mathbf{L}$. 
\end{Pt}

\begin{Pt}
For $\mathbf{\whL}\in\widehat{\cL}$, we have the standard $\mathbf{\whT}$-toric scheme
$$
\mathds{U}_{\mathbf{\whL}}:=\Spec(\bbZ[X^*(\mathbf{\whT})^{-/\mathbf{\whL}}])
$$
associated to $(\mathbf{\whL},\mathbf{\whB}\cap\mathbf{\whL},\mathbf{\whT})$ and the zero $\dsS_{\mathbf{\whL}}$-toric scheme
$$
\mathds{Z}_{\mathbf{\whL}}:=\Spec(\bbZ[\Delta_\mathbf{L}^{\perp}\cap X^*(\mathbf{\whT})^{-/\mathbf{\whL}}])
$$
associated to $(\mathbf{\whL},\mathbf{\whB}\cap\mathbf{\whL},\mathbf{\whT})$. Moreover, we have the zero $\dsS_{\mathbf{\whL}}$-toric scheme
$$
\mathds{Z}_{\mathbf{\whL}\subset \mathbf{\whG}}:=\Spec(\bbZ[\Delta_\mathbf{L}^{\perp}\cap X^*(\mathbf{\whT})^-])
$$

associated to $(\mathbf{\whL}\subset \mathbf{\whG},\mathbf{\whB},\mathbf{\whT})$. 
Let us note that the torus $\dsS_{\mathbf{\whL}}=\Spec(\bbZ[\Delta_\mathbf{L}^{\perp}])$ has dimension
$$
\dim \dsS_{\mathbf{\whL}}=\rank\mathbf{\whT}-|\Delta_{\mathbf{\whL}}|=\dim \dsU_{\mathbf{\whL}}-|\Delta_{\mathbf{\whL}}|=\dim \dsU_{\mathbf{\whG}}-|\Delta_{\mathbf{\whL}}|.
$$
\end{Pt}

\begin{Prop} 
\begin{enumerate}
\item Let $\mathbf{\whL}\in\widehat{\cL}$.
\begin{enumerate}
\item The homomorphisms of $\bbZ$-monoid schemes 
$$
\xymatrix{
\mathbf{\whT}=\dsU_{\mathbf{\whT}} \ar[r] & \dsU_{\mathbf{\whL}}\ar[r] & \dsU_{\mathbf{\whG}}
}
$$
induced by the inclusions $X^*(\mathbf{\whT})^- \subset X^*(\mathbf{\whT})^{-/\mathbf{\whL}}\subset X^*(\mathbf{\whT})$ are open immersions.

 \item The additive map
\begin{eqnarray*}
\bbZ[X^*(\mathbf{\whT})^-] & \lra & \bbZ[\Delta_L^{\perp}\cap X^*(\mathbf{\whT})^-] \\
e^{\nu} & \lmapsto & \left\{ \begin{array}{ll}
e^{\nu} & \textrm{if $\nu\in\Delta_L^{\perp}$}\\ 
0 & \textrm{otherwise}
\end{array} \right.
\end{eqnarray*}
is a surjective homomorphism of rings, which is compatible with comultiplications, i.e. which corresponds to a closed immersion of 
$\bbZ$-semigroup schemes
$$
\xymatrix{
\dsZ_{\mathbf{\whL}\subset \mathbf{\whG}}\ar[r] & \dsU_{\mathbf{\whG}}.
}
$$
We denote by $e_{\mathbf{\whL}}\in  \dsU_{\mathbf{\whG}}$ the image of the unit element of the monoid $\dsZ_{\mathbf{\whL}\subset \mathbf{\whG}}$.

\item We have
$
 \dsS_{\mathbf{\whL}}=\dsZ_{\mathbf{\whL}\subset \mathbf{\whG}}\cap \dsU_{\mathbf{\whL}}=\mathbf{\whT}e_{\mathbf{\whL}}  \subset \dsU_{\mathbf{\whG}}.
$
\end{enumerate}
\item The set $\{e_{\mathbf{\whL}}\ |\ \mathbf{\whL}\in\widehat{\cL}\}$ is the set of idempotent elements of the monoid $\mathds{U}_{\mathbf{\whG}}$.
\item The following map is a bijection: 
\begin{eqnarray*}
\widehat{\cL} & \lra& \dsU_{\mathbf{\whG}}/\mathbf{\whT}\\
\mathbf{\whL} & \lmapsto & \mathbf{\whT}e_{\mathbf{\whL}}= \dsS_{\mathbf{\whL}}.
\end{eqnarray*}

\end{enumerate}
\end{Prop}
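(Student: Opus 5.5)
We work entirely on the level of character monoids, using the standard dictionary for affine toric schemes: faces of a cone correspond to $\mathbf{\whT}$-orbits and to idempotents of the monoid. Set $M:=X^*(\mathbf{\whT})^-$. This is the monoid of lattice points in the closed antidominant cone $\overline{\fC^-}=\{\nu:\lan \alpha,\nu\ran\le 0\ \forall\alpha\in\Delta\}$, a rational polyhedral cone. Its faces are exactly the cones $F_I:=\overline{\fC^-}\cap I^{\perp}$ for $I\subset\Delta$, and $I\mapsto F_I$ is an inclusion-reversing bijection. Indeed, the defining inequalities $\lan\alpha,\cdot\ran\le0$ are linearly independent, so every subset of them cuts out a distinct face. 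Under $\widehat{\cL}\simeq\cP(\Delta)$, $\hL\mapsto\Delta_{\mathbf L}$, the face attached to $\hL$ is $F_{\hL}:=\Delta_{\mathbf L}^\perp\cap M$, and the monoid ring of the zero scheme $\dsZ_{\hL\subset\hG}$ is $\bbZ[F_{\hL}]$.

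For 1(a), $X^*(\mathbf{\whT})^{-/\hL}$ is $M$ with the inequalities for $\alpha\notin\Delta_{\mathbf L}$ dropped. I would show it is the localization $M[-\nu_0]$ for any $\nu_0$ in the relative interior of $F_{\hL}$, for instance $\nu_0=-\sum_{\alpha\notin\Delta_{\mathbf L}}\varpi_\alpha$ after tensoring with $\bbQ$ and clearing denominators. Then $\bbZ[X^*(\mathbf{\whT})^{-/\hL}]=\bbZ[M][e^{-\nu_0}]$, which makes $\dsU_{\hL}\subset\dsU_{\hG}$ a principal open. The case $\hL=\mathbf{\whT}$ is the same argument with $\nu_0$ in the interior of $M$.

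For 1(b), the point is that $\nu,\nu'\in M$ and $\nu+\nu'\in\Delta_{\mathbf L}^\perp$ together force $\nu,\nu'\in\Delta_{\mathbf L}^\perp$. This holds because each $\lan\alpha,\cdot\ran$ is $\le0$ on both, so a zero sum means both vanish. Hence the complement of $F_{\hL}$ spans an ideal, and the projection is a ring homomorphism. It commutes with the comultiplication $e^\nu\mapsto e^\nu\otimes e^\nu$ because it sends basis elements to basis elements or to $0$. Surjectivity is clear.

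For 1(c), the point $e_{\hL}$ is the character $e^\nu\mapsto 1$ if $\nu\in F_{\hL}$ and $0$ otherwise. Its $\mathbf{\whT}$-orbit is $\Spec\bbZ[\Delta_{\mathbf L}^\perp]=\dsS_{\hL}$, once one checks that $F_{\hL}$ generates $\Delta_{\mathbf L}^\perp$ as a group. Since $F_{\hL}$ is a full-dimensional cone inside $\Delta_{\mathbf L}^\perp\otimes\bbR$, one argues as follows: adding a large multiple of an interior lattice point to any element of $\Delta_{\mathbf L}^\perp$ lands in $F_{\hL}$. The intersection $\dsZ_{\hL\subset\hG}\cap\dsU_{\hL}$ is computed by inverting $e^{\nu_0}$ in $\bbZ[F_{\hL}]$, which again gives $\bbZ[\Delta_{\mathbf L}^\perp]$.

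For part 2, a point $x$, i.e. a monoid homomorphism $\chi:M\to R$, is idempotent iff $\chi^2=\chi$. I would first argue over fields and then check that the loci agree scheme-theoretically. Over a field, idempotence means $\chi$ takes values in $\{0,1\}$ and $\{\chi=1\}$ is a submonoid $S$ whose complement is an ideal. Such an $S$ is a face of $M$, hence equal to some $F_{\hL}$, and then $\chi=e_{\hL}$. For part 3, every point $\chi$ has support $\{\chi\ne0\}$ a face $F_{\hL}$, and $\chi$ restricted to $F_{\hL}$ extends to a character of $\Delta_{\mathbf L}^\perp$. It therefore lies in $\mathbf{\whT}e_{\hL}$, using that $\Delta_{\mathbf L}^\perp$ is saturated so characters extend from it to $X^*(\mathbf{\whT})$. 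Distinct faces give distinct supports, so the orbits are disjoint, which proves bijectivity.

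I expect the main obstacle to be the face classification over $\bbZ$ combined with the group-generation statement in 1(c), namely that $F_{\hL}$ generates $\Delta_{\mathbf L}^\perp$ and that characters of a saturated sublattice extend. This is also where a non-simply-connected $\hG$ (non-free $M$) needs care.
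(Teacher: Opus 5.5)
Your proof is correct, and it takes a genuinely different route from the paper. The paper gives no argument of its own and simply refers to \cite[1.9 and \S 8]{CP21}.

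You instead derive everything directly from the convex geometry of the antidominant cone:
\begin{enumerate}
\item the faces $F_I=\overline{\fC^-}\cap I^{\perp}$, $I\subset\Delta$, are pairwise distinct because the forms $\lan\alpha,\cdot\ran$ are linearly independent;
\item localizing at a lattice point $\nu_0$ in the relative interior of $F_{\hL}$ gives $\bbZ[X^*(\hT)^{-/\hL}]=\bbZ[M][e^{-\nu_0}]$, which both exhibits $\dsU_{\hL}$ as the principal open $D(e^{\nu_0})$ and computes $\dsZ_{\hL\subset\hG}\cap\dsU_{\hL}$;
\item the same $\nu_0$ shows that $F_{\hL}$ generates $\Delta_{\mathbf L}^{\perp}$;
\item saturation of $\Delta_{\mathbf L}^{\perp}$ lets characters extend to $X^*(\hT)$.
\end{enumerate}
This buys a self-contained and fully general argument, whereas the citation only buys brevity. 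Nothing you do uses freeness of $M$, so the worry you raise about non-simply-connected $\hG$ does not materialize.

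There is one step you assert rather than prove: that a submonoid $S\subset M$ with $\nu+\nu'\in S\Leftrightarrow\nu,\nu'\in S$ is of the form $F_{I}\cap M$. You use it both for idempotents and for supports of points in part 3, and it takes one line:
\begin{itemize}
\item Let $I=\{\alpha\in\Delta:\lan\alpha,S\ran=0\}$, so $S\subset F_I\cap M$.
\item For $\alpha\notin I$ pick $\nu_\alpha\in S$ with $\lan\alpha,\nu_\alpha\ran<0$, and set $\nu_1=\sum_{\alpha\notin I}\nu_\alpha\in S$.
\item For $\lambda\in F_I\cap M$ one has $k\nu_1-\lambda\in M$ for $k\gg0$, using that $M$ consists of all antidominant lattice points.
\item Then $\lambda+(k\nu_1-\lambda)=k\nu_1\in S$ forces $\lambda\in S$.
\end{itemize}

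Finally, part 2 should be read for points over fields (or connected bases), as you do. Over a ring with nontrivial idempotents one can glue different $e_{\hL}$ along a clopen decomposition, so there are more idempotent points than the $e_{\hL}$.
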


\begin{proof}
These facts are contained in \cite[1.9 and \S 8]{CP21}, or can be deduced easily from \emph{loc. cit.}
\end{proof}

\subsection*{The simply connected case}

\begin{Pt}\label{affinecoord}
The structure of the standard scheme $\dsU_{\mathbf{\whG}}$ is particularly transparent when $\mathbf{\whG}$ is simply connected, i.e. when 
$$
X^*(\mathbf{\whT})^-=\bigoplus_{\widehat{\alpha}\in\widehat{\Delta}}\bbN w_0(\omega_{\widehat{\alpha}})
$$
where the $\omega_{\widehat{\alpha}}$ are the fundamental weights (and $w_0$ is the longest element of $W$). Then 
$(w_0(\omega_{\widehat{\alpha}}))_{\widehat{\alpha}\in\widehat{\Delta}}$
is a family of coordinates on $\dsU_{\mathbf{\whG}}$, defining an isomorphism 
$$
\xymatrix{
(w_0(\omega_{\widehat{\alpha}}))_{\widehat{\alpha}\in\widehat{\Delta}}:\dsU_{\mathbf{\whG}} \ar[r]^<<<<<{\sim} & (\bbA^{\widehat{\Delta}},\times)
}
$$
to the standard affine space on the set $\widehat{\Delta}$ (i.e. of dimension $|\widehat{\Delta}|$ after choosing a numbering of the simple roots) equipped its natural multiplicative monoid law. The torus $\mathbf{\whT}\subset \dsU_{\mathbf{\whG}}$ corresponds to  $\bbG_m^{\widehat{\Delta}}\subset \bbA^{\widehat{\Delta}}$, and writing $x_{\alpha}:=w_0(\omega_{\alpha})(x)$ for any geometric point $x\in \dsU_{\mathbf{\whG}}$, we have:
$$
x\in \dsU_{\mathbf{\whL}(\whI)}\quad\Longleftrightarrow\quad  \forall \widehat{\alpha}\in \widehat{\Delta}\setminus \widehat{I},\ x_{\widehat{\alpha}}\neq 0
$$
$$
x\in  \dsZ_{\mathbf{\whL}(\whI)\subset\mathbf{\whG}}\quad\Longleftrightarrow\quad \forall \widehat{\alpha}\in \whI,\ x_{\widehat{\alpha}}=0
$$
$$
x\in \dsS_{\mathbf{\whL}(\whI)}\quad\Longleftrightarrow\quad \forall \widehat{\alpha}\in \widehat{\Delta},\ x_{\widehat{\alpha}}
\left\{ \begin{array}{ll} 
\neq 0 & \textrm{if $\widehat{\alpha}\notin \whI$}\\
=0 & \textrm{if $\widehat{\alpha}\in \whI$}
\end{array} \right.
$$
$$
e_{\mathbf{\whL}(\whI)}=
\left\{ \begin{array}{ll} 
1 & \textrm{if $\widehat{\alpha}\notin \whI$}\\
0 & \textrm{if $\widehat{\alpha}\in \whI$}.
\end{array} \right.
$$
\end{Pt}

\subsection*{The case of a simply connected derived group}

In this section, we assume that the derived subgroup $\widehat{\bfG}^{\der}$ of $\widehat{\bfG}$ is simply connected. It allows to generalize the canonical system of coordinates on $\dsU_{\mathbf{\whG}}$ from the simply connected case \ref{affinecoord}, to a (no more canonical but) related system of coordinates. For example, it includes the example of $\widehat{\bfG}=GL_n$ (in which case there is even a canonical choice). Write $\widehat{\bfT}^{\der}:=
\widehat{\bfT}\cap \widehat{\bfG}^{\der}.$

\begin{Pt}
Let us consider the canonical image of $\widehat{\Delta}\subset X^{*}(\widehat{\bfT})$ in $X^{*}(\widehat{\bfT}^{\der})$ given by restriction to $\widehat{\bfT}^{\der}\subset \widehat{\bfT}$. This is a basis of the root system of $\widehat{\bfG}^{\der}$, and we will still denote it by $\widehat{\Delta}$. The corresponding set of simple coroots $\Delta\subset X_{*}(\widehat{\bfT}^{\scc})$ is a $\bbZ$-basis of the lattice $X_{*}(\widehat{\bfT}^{\der})$, and its dual basis in $X^{*}(\widehat{\bfT}^{\der})$ is the $\bbZ$-basis
$$
\{\overline{\omega_{\widehat{\alpha}}},\ \widehat{\alpha}\in\widehat{\Delta}\}\subset X^{*}(\widehat{\bfT}^{\der})
$$
of \emph{fundamental weights} associated to $\widehat{\bfG}$. In particular, $X^{*}(\widehat{\bfT})^0:=X^*(\widehat{\bfT})^-\cap X^*(\widehat{\bfT})^+$ is the kernel of the surjective group homomorphism
\begin{eqnarray*}
X^{*}(\widehat{\bfT}) & \lra & X^{*}(\widehat{\bfT}^{\der}) \\
\lambda & \lmapsto & \sum_{\widehat{\alpha}\in\widehat{\Delta}}\lan \lambda|_{\widehat{\bfT}^{\der}}, \alpha\ran\overline{\omega_{\widehat{\alpha}}},
\end{eqnarray*}
that is to say, $X^*(\widehat{\bfT})^0=X^{*}(\widehat{\bfG}^{\ab})$, where $\widehat{\bfG}^{\ab}$ is the torus $\widehat{\bfG}/\widehat{\bfG}^{\der}$. Moreover the fundamental weights are a basis of the monoid $X^{*}(\widehat{\bfT}^{\der})^+$, and the above map induces an isomorphism of monoids
\begin{eqnarray*}
X^{*}(\widehat{\bfT})^+/X^{*}(\widehat{\bfT})^0 & \lra & X^{*}(\widehat{\bfT}^{\der})^+ \\
\lambda & \lmapsto & \sum_{\widehat{\alpha}\in\widehat{\Delta}}\lan \lambda|_{\widehat{\bfT}^{\der}}, \alpha\ran\overline{\omega_{\widehat{\alpha}}},
\end{eqnarray*}
or equivalently, an isomorphism of monoids
\begin{eqnarray*}
X^{*}(\widehat{\bfT})^-/X^{*}(\widehat{\bfT})^0 & \lra & X^{*}(\widehat{\bfT}^{\der})^- \\
\lambda & \lmapsto & \sum_{\widehat{\alpha}\in\widehat{\Delta}}\lan \lambda|_{\widehat{\bfT}^{\der}}, w_0(\alpha)\ran w_0(\overline{\omega_{\widehat{\alpha}}}).
\end{eqnarray*}
In particular, in $X^{*}(\widehat{\bfT})^-$ we have:
$$
\Delta^{\perp}=(-\Delta)^{\perp}=(w_0(\Delta))^{\perp}=X^{*}(\widehat{\bfT})^0,
$$
i.e. 
$$
\mathds{S}_{\mathbf{\whG}}:=\Spec(\bbZ[\Delta^{\perp}])=\Spec(\bbZ[X^{*}(\widehat{\bfT})^0])=\widehat{\bfG}^{\ab}.
$$
\end{Pt}

\begin{Pt}\label{monoidbasis}
Let us choose a lift
$$
\{\omega_{\widehat{\alpha}},\ \widehat{\alpha}\in\widehat{\Delta}\}\subset X^{*}(\widehat{\bfT})^+.
$$
It gives a decomposition of the monoid $X^{*}(\widehat{\bfT})^-$ :
$$
X^{*}(\widehat{\bfT})^-=\bigoplus_{\widehat{\alpha}\in\widehat{\Delta}}\bbN w_0(\omega_{\widehat{\alpha}})\bigoplus X^{*}(\widehat{\bfT})^0,
$$
and hence an isomorphism of $\bbZ$-monoid schemes:
 $$
\xymatrix{
\dsU_{\mathbf{\whG}} \ar[r]^<<<<<{\sim} & \dsU_{\mathbf{\whG}^{\scc}}\times\mathds{S}_{\mathbf{\whG}}
\ar[r]^<<<<<{\sim} & \bbA^{\widehat{\Delta}}\times\widehat{\bfG}^{\ab}.
}
$$
\end{Pt}




\vskip10pt

\noindent {\small Tobias Schmidt, Bergische Universit\"at Wuppertal, Gaußstraße 20, 42119 Wuppertal, Germany, \newline {\it E-mail address: \url{toschmidt@uni-wuppertal.de}} 

\end{document}